\newtheorem{Theorem}{Theorem}[section]
\newtheorem{Lemma}{Lemma}[section]
\newtheorem{Proposition}{Proposition}[section]
\newtheorem{Corollary}{Corollary}[section]
\newtheorem{Ex}{Example}[section]
\newtheorem{Remark}{Remark}[section]
\theoremstyle{remark}
\newcommand{\be}{\begin{equation}}
\newcommand{\ee}{\end{equation}}
\newcommand{\R}{\mathbb{R}}\newcommand{\Id}{\textrm{\rm Id}}
\newcommand{\gl}{\mathrm{gl}}
\newcommand {\Lfirst} {{L_{\mathsf{comp1}}}}
\newcommand {\Lsecond} {{L_{\mathsf{comp2}}}}
\newcommand{\ddd}{\mathrm{d}}
\newcommand{\pd}[2]{\frac{\partial#1}{\partial#2}}
\newcommand{\dd}{{\mathrm d}\,}
\newcommand{\tr}{\operatorname{tr}}
\newcommand{\weg}[1]{}
\title{{Nijnehuis Geometry III: $\gl$-regular Nijenhuis operators}}
\author{Alexey V. Bolsinov\footnote{ School of Mathematics,
 Loughborough University,
 LE11 3TU, UK;  Faculty of Mechanics and Mathematics, Moscow State University and Moscow Center
for Fundamental and Applied Mathematics,  119992,  Moscow  Russia\ \
 \quad {\tt A. Bolsinov@lboro.ac.uk} } \quad
\& \quad  Andrey Yu. Konyaev\footnote{Faculty of Mechanics and Mathematics, Moscow State University, 119992, Moscow Russia
 \ \ \quad {\tt  maodzund@yandex.ru}} \quad \& \quad Vladimir S. Matveev\footnote{
Institut f\"ur Mathematik, Friedrich Schiller Universit\"at Jena,
07737 Jena Germany  \ \ \quad {\tt  vladimir.matveev@uni-jena.de}} 
}  
\date{}
\begin{document}

\maketitle

\begin{abstract}
We study Nijenhuis operators, that is, $(1,1)$-tensors with vanishing Nijenhuis torsion under the additional assumption that they are $gl$-regular, i.e., every eigenvalue has geometric multiplicity one. We prove the existence of a coordinate system in which the operator takes first or second companion  form, and give a local  describtion of such operators. We apply this local description to study singular points. In particular, we obtain their normal forms in dimension two and discover topological restrictions for the existence of $gl$-regular  Nijenhuis operators on closed surfaces. 

This paper is  an important step in the research programme suggested in \cite{Nijenhuis1,openprob}.

\end{abstract}

\tableofcontents


\section{Basic definitions and main results}\label{sect:intro2}

Given a $(1,1)$ tensor field $L$  on a manifold $\mathsf M^n$, one defines the {\it Nijenhuis torsion} of $L$ as
\begin{equation}
  \mathcal N_L(\xi,\eta) = L^2[\xi,\eta] - L[L\xi,\eta] - L[\xi,L\eta] + [L\xi, L\eta],  
\end{equation}
where $\xi, \eta$ are arbitrary vector fields.  Recall that $L$ is said to be a {\it Nijenhuis operator} if its Nijenhuis torsion vanishes. 


Nijenhuis geometry studies Nijenhuis operators and their properties, both local and global. A research programme and general strategy for studying such operators were suggested in \cite{Nijenhuis1}.  This paper is devoted to the next item of our agenda (after \cite{Nijenhuis1}, \cite{Nijenhuis3}, \cite{konyaev})  and is focused on Nijenhuis operators satisfying $\gl$-regularity condition.

We start with the following equivalent definitions of $\gl$-{\it regular} operators $L:\R^n \to \R^n$ (the same notation $L$ will be used for the matrix corresponding to this operator, with appropriate amendments under coordinate transformations if necessary):  
\begin{itemize}

\item $L$ is a regular element of the Lie algebra $\gl(n,\R)$ in the sense that the adjoint orbit $\mathcal  O(L)=\{ PLP^{-1} ~|~ P\in \mathrm{GL}(n,\R)\} \subset \gl(n,\R)$ has maximal dimension.

\item The operators $\operatorname{Id}, L, \dots, L^{n - 1}$ are linearly independent.

\item For each eigenvalue of $L$ there is exactly one Jordan block in its Jordan normal form (this includes complex eigenvalues).
    
    \item The minimal polynomial of $L$ coincides with the characteristic polynomial
    $$
    \chi_L (\lambda) =  \det (\lambda{\cdot}\operatorname{Id} - L) = \lambda^n - c_1 \lambda^{n - 1} - \dots - c_n.
    $$
    \item $L$ is similar to the \textit{first companion form}
    \begin{equation*}
    \left(\begin{array}{ccccc}
     c_1 & 1 & 0 & \dots & 0  \\
     c_2 & 0 & 1 & \ddots & \vdots  \\
     \vdots & \vdots  & \ddots & \ddots & 0 \\
     c_{n - 1} & 0 & \dots & 0 & 1 \\
     c_n & 0 & \dots  & 0  & 0 \\
    \end{array}\right),
    \end{equation*}
    where $c_i$ are the coefficients of the characteristic polynomial $\chi_L (\lambda)$.
    \item $L$ is similar to the \textit{second companion form}
        \begin{equation*}
        \left(\begin{array}{ccccc}
     0 &  \! 1 & \!\!\! 0 & \dots & 0  \\
     0 & \! 0 & \!\!\! 1 & \ddots & \vdots \\
     \vdots & \!  \vdots & \!\! \ddots \! &\ddots &  0\\
     0 &  \! 0 & \!\! \dots \! & 0 & 1 \\
     c_n &  \!\! c_{n - 1} \! & \!\! \dots \! & c_2 & c_1 \\
    \end{array}\right),
\end{equation*}
where $c_i$ are the coefficients of the characteristic polynomial $\chi_L (\lambda)$.
\end{itemize}

We say that a Nijenhuis operator $L$ defined on a smooth manifold $\mathsf M$ is \textit{$\gl$-regular}, if it is $\gl$-regular at every point $\mathsf p \in \mathsf M$ \cite[Definition 2.9]{Nijenhuis1}.  Many results in our paper are local and in this case $\mathsf M$ is an open domain in $\R^n$.

Note that the eigenvalues of $\gl$-regular operators  are not necessarily smooth as the following example shows. 
Consider the $\gl$-regular Nijenhuis operator
\begin{equation*}
    L = \left(\begin{array}{cc}
         x & 1  \\
         y & 0 
    \end{array}\right)
\end{equation*}
 on $\R^2(x,y)$. Its eigenvalues are
$$
\lambda_{1, 2} = \frac{x \pm \sqrt{x^2 + 4y}}{2}.
$$
On the curve $x^2 + 4y = 0$,  $L$ is similar to a single Jordan block with eigenvalue $\frac{x}{2}$. If $x^2 + 4y > 0$, then $L$ is semisimple with distinct real eigenvalues (thus, $\R$-diagonalizable) whereas for $x^2 + 4y < 0$ this operator has two complex conjugate eigenvalues.  In particular,  this shows that $\gl$-regular operators may admit singular points (cf. \cite[Definition 2.8]{Nijenhuis1}) at which the algebraic structure of $L$ changes.

All the objects we are dealing with are supposed to be real analytic.  The first result of the paper is the following theorem which gives a local characterisation of  $\gl$-regular Njenhuis operators of any algebraic type.

\begin{Theorem}\label{main:1}
Consider a real analytic $\gl$-regular operator $L$ with characteristic polynomial 
$$
\chi_L (\lambda) = \det (\lambda{\cdot}\operatorname{Id} - L) = \lambda^n - f_1 \lambda^{n - 1} - \dots - f_n
$$ 
for $n \geq 2$ in a sufficiently small neighbourhood of a point $\mathsf{p}\in \mathsf{M}$.  Then the following are equivalent

\begin{enumerate}
\item[{\rm (i)}] $L$ is Nijenhuis.
    \item[{\rm (ii)}] There exists a local coordinate system $x=(x^1, \dots, x^n)$ in which $L$ takes the following form
    \begin{equation}\label{first}
  \Lfirst (x) =  \left(\begin{array}{ccccc}
     f_1 & 1 & 0 & \dots & 0  \\
     f_2 & 0 & 1 & \ddots & \vdots \\
     \vdots & \vdots & \ddots & \ddots & 0 \\
     f_{n - 1} & 0 & \dots & 0 & 1 \\
     f_n & 0 & \dots &  0 & 0 \\
    \end{array}\right),
    \end{equation}
    where $f_i=f_i(x)$ are coefficients of the characteristic polynomial in this coordinate system. 
    These coefficients satisfy the following system of PDEs: 
    \begin{equation}\label{first_set}
    \begin{aligned}
        & \frac {\partial f_i} {\partial x^{j}} = f_i  \frac{\partial f_1}{\partial x^{j+1}} + \frac{\partial f_{i + 1}} {\partial x^{j+1}} , \\
        & \frac{\partial f_n}{\partial x^{j}}  = f_n \frac{\partial f_1}{\partial x^{j+1}}.
        \end{aligned}
    \end{equation}
for $1 \leq i, j \leq n-1$. 
     
       \item[{\rm (iii)}] There exists a local coordinate system $x=(x^1, \dots, x^n)$ in which $L$ takes the following form
    \begin{equation}\label{second}
    \Lsecond(x) = \left(\begin{array}{ccccc}
     0 & 1 & 0 & \dots & 0  \\
     0 & 0 & 1 & \ddots & \vdots \\
     \vdots & \vdots & \ddots &\ddots &  0\\
     0 & 0 & \dots & 0 & 1 \\
     f_n & f_{n - 1} & \dots  & f_2 & f_1 \\
    \end{array}\right),
    \end{equation}
    where $f_i=f_i(x)$ are coefficients of the characteristic polynomial in this coordinate system.  These coefficients satisfy a system of PDEs that can be written in the form
    \begin{equation}\label{second_set}
        \ddd \omega = 0,  \quad \ddd \bigl( L^*\omega \bigr)= 0,
    \end{equation}
where $\omega = f_n \ddd x^1 + \dots + f_1 \ddd x^n$.
    
   \end{enumerate}
\end{Theorem}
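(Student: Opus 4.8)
The plan is to treat the two ``algebraic'' statements (ii) and (iii) as explicit transcriptions of the single geometric condition (i), so that the work splits into two parts of very different character: the elementary verification that in companion coordinates the equation $\mathcal N_L=0$ becomes the stated PDE systems, and the genuinely hard existence of such coordinates. I would begin with the pointwise remark that $\gl$-regularity means $L$ is, at each point separately, conjugate to both companion matrices; the content of the theorem is therefore the existence of a \emph{single} real-analytic chart realising this conjugation simultaneously in a neighbourhood, together with the identification of the integrability conditions on the coefficients $f_i$.

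For the easy directions (ii)$\Rightarrow$(i) and (iii)$\Rightarrow$(i) I would substitute the matrices $\Lfirst(x)$ and $\Lsecond(x)$ into $\mathcal N_L(\partial_{i},\partial_{j})$, using that the coordinate fields commute so that $\mathcal N_L(\partial_i,\partial_j)=[L\partial_i,L\partial_j]-L[L\partial_i,\partial_j]-L[\partial_i,L\partial_j]$ depends only on the derivatives of the entries. In the first companion case the superdiagonal structure $L\partial_j=\partial_{j-1}$ collapses the torsion components precisely to the relations \eqref{first_set}. In the second companion case it is cleaner to note that the matrix form forces $L^{*}\ddd x^{i}=\ddd x^{i+1}$ for $i<n$ and $L^{*}\ddd x^{n}=\omega$; a direct computation then shows that all components of $\mathcal N_L$ organise into the two vector-valued closedness conditions $\ddd\omega=0$ and $\ddd(L^{*}\omega)=0$, which is \eqref{second_set}. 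These are finite, if tedious, bookkeeping computations with no conceptual obstacle.

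The heart of the proof is the converse, (i)$\Rightarrow$(iii) (and, symmetrically, (i)$\Rightarrow$(ii)), where I would build the coframe from a single generator. Using $\gl$-regularity of $L^{*}$ at $\mathsf p$, choose a covector that is cyclic, so that $\theta,L^{*}\theta,\dots,(L^{*})^{n-1}\theta$ are independent near $\mathsf p$; the goal is to realise this tower as $\ddd x^{1},\dots,\ddd x^{n}$, for which each $(L^{*})^{k}\ddd x^{1}$ must be closed. The engine is the following recursion, obtained by expanding $\ddd\big((L^{*})^{k}\alpha\big)$ on commuting fields: writing $\alpha_{m}=(L^{*})^{m}\alpha$, one has, schematically,
\[
\ddd\big(L^{*}\alpha_{k-1}\big)(\xi,\eta)=\big(\text{$L$-derivation of }\ddd\alpha_{k-1}\big)(\xi,\eta)+\ddd\alpha_{k-2}(L\xi,L\eta)-\big\langle \alpha_{k-2},\mathcal N_L(\xi,\eta)\big\rangle .
\]
When $\mathcal N_L=0$ the torsion term drops out and the identity becomes linear in $\ddd\alpha_{k-1}$ and $\ddd\alpha_{k-2}$; hence closedness propagates up the tower from two consecutive closed members. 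Consequently it suffices to find a function $x^{1}$, with $\ddd x^{1}$ cyclic, for which the single form $L^{*}\ddd x^{1}$ is closed: the Poincar\'e lemma then produces $x^{2},\dots,x^{n}$ via $\ddd x^{k+1}=(L^{*})^{k}\ddd x^{1}$, in these coordinates $L$ is automatically in second companion form, and the coefficients satisfy \eqref{second_set} by the already-proven direction.

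The main obstacle is precisely this last existence step: the equation $\ddd(L^{*}\ddd x^{1})=0$ is an overdetermined second-order linear system for the single unknown $x^{1}$, and one must produce a solution whose differential is cyclic at $\mathsf p$. I expect the formal integrability of this system to be governed exactly by $\mathcal N_L=0$, so that in the real-analytic category a solution can be obtained by a Cauchy--Kovalevskaya / Cartan--K\"ahler argument after prescribing non-degenerate initial data at $\mathsf p$, with $\gl$-regularity ensuring that cyclicity is an open condition and hence survives in a neighbourhood; an alternative would be an inductive argument exploiting a Nijenhuis-invariant foliation coming from the earlier papers of the series. Finally, the equivalence (ii)$\Leftrightarrow$(iii) follows formally once each has been shown equivalent to (i); alternatively one passes between the two normal forms by the classical frame/coframe duality, checking that \eqref{first_set} and \eqref{second_set} transform into one another. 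I would carry out the argument in detail for (iii) and obtain (ii) either by the dual construction — a cyclic \emph{vector} field $v$ with $[L^{i}v,L^{j}v]=0$, the commutators obeying a bracket recursion dual to the one above — or by the transition between companion forms.
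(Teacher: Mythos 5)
Your two easy directions and your reduction scheme are sound. In particular, the ``engine'' identity you invoke is correct: for any $(1,1)$-tensor $L$ and any $1$-form $\alpha$ one has the purely algebraic identity
\begin{equation*}
\ddd\bigl((L^*)^2\alpha\bigr)(\xi,\eta) - \ddd(L^*\alpha)(L\xi,\eta) - \ddd(L^*\alpha)(\xi,L\eta) + \ddd\alpha(L\xi,L\eta) = -\alpha\bigl(\mathcal N_L(\xi,\eta)\bigr),
\end{equation*}
so when $\mathcal N_L=0$ closedness indeed propagates up the tower $(L^*)^k\alpha$ from two consecutive closed members, and (i)$\Rightarrow$(iii) is thereby reduced to producing a single function $x^1$ whose differential is cyclic at $\mathsf p$ and satisfies $\ddd(L^*\ddd x^1)=0$; Cayley--Hamilton then puts $L$ into the form \eqref{second}.

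The genuine gap is that precisely this existence statement is the analytic core of the theorem, and you do not prove it: you only ``expect'' the formal integrability of the overdetermined system $\ddd(L^*\ddd x^1)=0$ (which is $\binom{n}{2}$ second-order equations for one unknown function) to be governed by $\mathcal N_L=0$, and you appeal to Cauchy--Kovalevskaya/Cartan--K\"ahler. But Cartan--K\"ahler can only be applied after involutivity has been verified, and verifying it is exactly where the vanishing of the torsion must enter quantitatively; the same remark applies to your dual route for (ii), where the existence of a vector field $\xi$ with $\xi, L\xi, \dots, L^{n-1}\xi$ pairwise commuting and independent is again an unproven integrability claim, not a formality. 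This is the step the paper actually carries out, in a different formulation: the unknown coordinate change $u=u(x)$ is written as a solution of the first-order quasilinear system \eqref{main_eq} (with $A_k=L^k$ for the first companion form and $A_k=M_k$ as in \eqref{eq:bols3.4} for the second); Proposition \ref{prop:3.1} quotes the criterion that such a system admits solutions for arbitrary analytic initial data if and only if the $A_k$ pairwise commute and $\langle A_k,A_j\rangle=0$; Proposition \ref{prop:3.2} verifies these identities via the Magri--Lorenzoni construction of commuting hierarchies; and $\gl$-regularity is then used to choose initial data making the Jacobian invertible. Note that your scalar second-order system is equivalent to that first-order system (the potentials $x^1,\dots,x^n$ with $\ddd x^{k+1}=(L^*)^k\ddd x^1$ are exactly the inverse of the paper's map $u(x)$), so your route can certainly be completed --- but completing it amounts to supplying the involutivity proof that constitutes Section \ref{sect:proof1} of the paper. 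As written, the proposal assumes the crux rather than proving it.
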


Following the terminology from Linear Algebra, we will refer to \eqref{first} and \eqref{second} as the {\it first} and {\it second companion forms} of $L$.

\begin{Remark}\label{rem:1.4}
{\rm
If a Nijenhuis operator $L$ is differentially non-degenerate at a point $\mathsf{p}\in \mathsf{M}$ \cite[Definition 2.10]{Nijenhuis1}\footnote{Recall that this condition means that the differentials $\ddd f_1(\mathsf{p}), \dots ,\ddd f_1(\mathsf{p})$ are linearly independent.}, then there are two distinguished coordinate systems in which $L$ takes the first and second companion form.  Namely,  if we take the coefficients of the characteristic polynomial of $L$ as local coordinates, i.e.,  set $x^i = f_i$, then in these coordinates $L$ takes the form   
\begin{equation}
\label{eq:nilp_pert}
 \Lfirst (x) =  \left(\begin{array}{ccccc}
     x^1 & 1 & 0 & \dots & 0  \\
     x^2 & 0 & 1 & \ddots & \vdots \\
     \vdots & \vdots & \ddots & \ddots & 0 \\
     x^{n - 1} & 0 & \dots & 0 & 1 \\
     x^n & 0 & \dots &  0 & 0 \\
    \end{array}\right).
\end{equation}
Similarly, if we set $x^1 =\tr L, x^2 = \frac{1}{2}\tr L^2, \dots, x^n=\frac{1}{n}\tr L^n$, then in these coordinates, we have
$$
    \Lsecond(x) = \left(\begin{array}{ccccc}
     0 & 1 & 0 & \dots & 0  \\
     0 & 0 & 1 & \ddots & \vdots \\
     \vdots & \vdots & \ddots &\ddots &  0\\
     0 & 0 & \dots & 0 & 1 \\
     f_n(x) & f_{n - 1}(x) & \dots  & f_2(x) & f_1(x) \\
    \end{array}\right),
$$
where $f_i (x)$ are the so-called Newton-Girard polynomials that express the coefficients of the characteristic polynomial in terms of the traces of powers of $L$ appropriately rescaled, see \cite[Appendix B]{Nijenhuis3} for details.

The point of Theorem \ref{main:1}, however, is that such a nice companion form exists for any $\gl$-regular Nijenhuis operator so that in the real analytic category the differential non-degeneracy condition is not actually important.
}\end{Remark}

\begin{Remark}  {\rm The existence of the first companion form for an operator $L$ is equivalent to the existence of a vector field $\xi$ such that  $\xi$, $L\xi$, $L^2\xi$, \dots, $L^{n-1}\xi$ pairwise commute and are linearly independent  (for $\Lfirst$, this vector field is $\xi=\partial_{x^n}$).   Similarly, the existence of the second companion form for $L$ is equivalent to the existence of a closed 1-form $\alpha$ such that the forms $\alpha$, $L^*\alpha$, $(L^*)^2\alpha$,\dots, $(L^*)^{n-1} \alpha$ are all closed and linearly independent (for $\Lsecond$,  we can take $\alpha = \ddd x^1$).  
}\end{Remark}

\begin{Remark}{\rm The reducibility of an  operator to a companion form
by a coordinate transformation is a
 non-trivial condition. Indeed,  
companion forms \eqref{first} and \eqref{second} are parametrised by $n$ functions (in $n$ variables). The coordinate change is also parametrized by $n$ functions. At the same time, an operator field $L$ (not necessarily Nijenhuis) is parametrised by $n^2$ functions. For $n > 2$ one has $n^2 > 2n$ and, thus, almost no operator field $L$ can be brought to companion form.

As a specific example, consider $L$ such that the coefficients $f_i$ of its characteristic polynomial $\chi_L(\lambda)$ are all constant. The companion form for $L$ will then be a constant matrix.  Hence, if $L$ is reducible to companion form by a suitable coordinate transformation then its Nijenhuis torsion $\mathcal N_L$ necessarily vanishes, which is not always the case. Indeed, take
$$
    L = \left(
    \begin{array}{ccc}
         0 & 1 & 0  \\
         -(y^2 + 1) & 0 & 1\\
         0 & (y^2 + 1) & 0 \\
    \end{array}
    \right).
$$
This operator is nilpotent, but $\mathcal N_L \neq 0$. Thus, $L$ cannot be brought to companion form.
}\end{Remark}

\begin{Remark}{\rm
The set of coordinate systems in which $\gl$-regular Nijenhuis operator $L$ is in first or second companion form is parametrised by $n$ functions of one variable which is the maximal number of possible parameters. In more precise terms, the equations defining the corresponding coordinate transformations (see \eqref{eq:bols3.2} and \eqref{eq:bols3.4} below) are in involution for a $\gl$-regular operator $L$ if and only if $L$ is Nijenhuis (see Propositions \ref{prop:3.2} and \ref{pro1}).  
}\end{Remark}


Theorem \ref{main:1}  characterises $\gl$-regular Nijenhuis operators but, in fact, should not be interpreted as their local description. To get  such a description one needs another important step. Namely, one needs to resolve PDE system  \eqref{first_set}  in order fo find functions $f_i$ from the first column of $\Lfirst$. The second result of our paper is an algebraic method for solving this system for arbitrary initial conditions.  

\begin{Theorem}
 \label{main:2}
 For $n$ arbitrary real analytic functions $v_1(t), \dots, v_n(t)$ defined in a neighbourhood of zero,   
 consider the function 
 $$
 r(\lambda,t) = \lambda^n  -  v_1(t) \lambda^{n-1} - v_2(t) \lambda^{n-2} - \dots - v_{n-1}(t) \lambda - v_n(t)
 $$ 
 and the matrix relation 
 $$
 r(L, M) = 0,
 $$
 where $M = x^1 L^{n-1} + x^2 L^{n-2} + \dots + x^{n-1} L + x^n\operatorname{Id}$ and $L$ is a $\gl$-regular  $n\times n$ matrix. 
 Then
 \begin{itemize}
\item From this matrix relation,  the coefficients $f_1,\dots, f_n$ of the characteristic polynomial of $L$ can be uniquely expressed in a neighbourhood of $x=0$  as real analytic functions in $x^1,\dots, x^n$ (by Implicit Function Theorem).   

\item The functions $f_1(x),\dots,f_n(x)$ so obtained are solutions  of \eqref{first_set} satisfying the initial condition 
 \begin{equation}
 \label{eq:27}
 \begin{aligned}
 f_1(0,\dots,0,x^n) &= v_1(x^n),\\
 f_2(0,\dots,0,x^n) &= v_2(x^n),\\
 & \dots \\
 f_n(0,\dots,0,x^n) &= v_n(x^n).
 \end{aligned}
 \end{equation}
 \end{itemize}
\end{Theorem}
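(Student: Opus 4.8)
The plan is to translate the matrix identity $r(L,M)=0$ into a scalar divisibility condition in the variable $\lambda$ and then differentiate it. Since $L$ is $\gl$-regular, the commutative algebra $\R[L]$ is $n$-dimensional and isomorphic to $\R[\lambda]/(\chi_L)$ with $\chi_L(\lambda)=\lambda^n-f_1\lambda^{n-1}-\dots-f_n$. Writing $m(\lambda,x)=x^1\lambda^{n-1}+\dots+x^{n-1}\lambda+x^n$, so that $M=m(L)$, the functional calculus identifies $v_i(M)$ with $v_i\bigl(m(\lambda)\bigr)\bmod\chi_L$; hence, setting
\[
R(\lambda,x):=r\bigl(\lambda,m(\lambda,x)\bigr)=\lambda^n-\sum_{i=1}^n v_i\bigl(m(\lambda,x)\bigr)\lambda^{n-i},
\]
the relation $r(L,M)=0$ is equivalent to $R(L,x)=0$, i.e.\ to the divisibility $\chi_L\mid R$ (of germs of analytic functions of $\lambda$, counted with multiplicity). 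For the first bullet I would apply the Implicit Function Theorem to the $n$ coefficients of $R\bmod\chi_L$, regarded as analytic functions $G_p(f,x)$ of the unknowns $f=(f_1,\dots,f_n)$. At $x=0$ one has $m\equiv0$, so $R(\lambda,0)=\lambda^n-\sum_i v_i(0)\lambda^{n-i}$ and the equations decouple into $f_i=v_i(0)$; the Jacobian $\partial G_p/\partial f_q$ at this point is a permutation matrix, so the Implicit Function Theorem yields a unique real analytic solution $f(x)$ with $f_i(0)=v_i(0)$.

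For the second bullet, the crucial observation is a differential identity for $R$ itself. Since $\partial_{x^j}m=\lambda^{n-j}$, we obtain
\[
\partial_{x^j}R=-\lambda^{n-j}\sum_{i=1}^n v_i'\bigl(m(\lambda,x)\bigr)\lambda^{n-i},
\]
and the sum on the right is independent of $j$; therefore $\partial_{x^j}R=\lambda\,\partial_{x^{j+1}}R$ for $j=1,\dots,n-1$. Next I would use the factorisation $R=Q\,\chi_L$, where $Q$ is analytic in $\lambda$ near the roots of $\chi_L$; because $R(\lambda,0)=\chi_L(\lambda,0)$ we have $Q(\cdot,0)\equiv1$, so for small $x$ the factor $Q$ is a nonvanishing unit, coprime to $\chi_L$. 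Differentiating $R=Q\chi_L$ and inserting the identity above gives
\[
Q\bigl(\partial_{x^j}\chi_L-\lambda\,\partial_{x^{j+1}}\chi_L\bigr)=\chi_L\bigl(\lambda\,\partial_{x^{j+1}}Q-\partial_{x^j}Q\bigr),
\]
so $\chi_L$ divides $Q\bigl(\partial_{x^j}\chi_L-\lambda\,\partial_{x^{j+1}}\chi_L\bigr)$; coprimality of $Q$ and $\chi_L$ then forces $\chi_L\mid\bigl(\partial_{x^j}\chi_L-\lambda\,\partial_{x^{j+1}}\chi_L\bigr)$. As the latter is a polynomial in $\lambda$ of degree $\le n$ with leading coefficient $\partial_{x^{j+1}}f_1$, it must equal $(\partial_{x^{j+1}}f_1)\,\chi_L$.

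Finally, comparing coefficients of $\lambda^{n-i}$ in $\partial_{x^j}\chi_L-\lambda\,\partial_{x^{j+1}}\chi_L=(\partial_{x^{j+1}}f_1)\,\chi_L$ reproduces exactly \eqref{first_set}: the powers $\lambda^{n-i}$ with $1\le i\le n-1$ give $\partial_{x^j}f_i=f_i\,\partial_{x^{j+1}}f_1+\partial_{x^{j+1}}f_{i+1}$, and the constant term gives $\partial_{x^j}f_n=f_n\,\partial_{x^{j+1}}f_1$. The initial condition \eqref{eq:27} is then immediate: along the axis $x^1=\dots=x^{n-1}=0$ one has $M=x^n\operatorname{Id}$, so $v_i(M)=v_i(x^n)\operatorname{Id}$ and $r(L,M)=0$ collapses to $\sum_i\bigl(f_i-v_i(x^n)\bigr)L^{n-i}=0$, whence $f_i=v_i(x^n)$. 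I expect the main obstacle to be the careful justification of the reduction and the factorisation in the presence of multiple and complex eigenvalues: one must verify that $v_i(M)$ genuinely corresponds to $v_i\bigl(m(\lambda)\bigr)\bmod\chi_L$ under $\R[L]\cong\R[\lambda]/(\chi_L)$, that $\chi_L$ accounts for all roots of $R$ near the unperturbed ones with the correct multiplicities (so that $Q$ is indeed a unit and the divisibility steps are valid over $\mathbb{C}$), and that the resulting real analytic identities persist at the singular points where the algebraic type of $L$ degenerates.
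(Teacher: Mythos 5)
Your proof is correct, but it takes a genuinely different route from the paper's. The paper works concretely with $L$ in first companion form: it expands $\sum_i v_i(M)L^{n-i}$ in the basis $\operatorname{Id},L,\dots,L^{n-1}$ (the coefficients $g_i(x,f)$ being the entries of the last column of that matrix, since $L$ is a companion matrix), so that $r(L,M)=0$ becomes the fixed-point system $f_i=g_i(x,f)$; the IFT step is then the same in spirit as yours, except that the paper checks $\bigl(\partial g_i/\partial f_\alpha\bigr)=0$ along the whole axis $(0,\dots,0,x^n)$ rather than only at the origin, and the verification that $f(x)$ solves \eqref{first_set} is done by two matrix lemmas --- $g_{x^j}=L\,g_{x^{j+1}}$ (differentiate the defining expansion and use Cayley--Hamilton) and $L\bigl(\partial g/\partial f\bigr)=\bigl(\partial g/\partial f\bigr)L$ (deduced from Proposition \ref{prop:33}) --- which combine into $\bigl(\operatorname{Id}-\bigl(\partial g/\partial f\bigr)\bigr)\bigl(f_{x^j}-Lf_{x^{j+1}}\bigr)=0$, and local invertibility of the first factor finishes the argument. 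You instead collapse everything to scalar function theory in $\lambda$: $\gl$-regularity (minimal polynomial $=$ characteristic polynomial) plus the composition rule for matrix functions turns $r(L,M)=0$ into the analytic divisibility $\chi_L\mid R$, and then the single identity $\partial_{x^j}R=\lambda\,\partial_{x^{j+1}}R$, the unit factor $Q$, and a degree count force $\partial_{x^j}\chi_L-\lambda\,\partial_{x^{j+1}}\chi_L=(\partial_{x^{j+1}}f_1)\,\chi_L$, which upon comparing coefficients is exactly \eqref{first_set}. What your route buys: it is self-contained (no appeal to Proposition \ref{prop:33} or to the companion-form expansion) and makes transparent \emph{why} \eqref{first_set} appears --- it is precisely the congruence $\partial_{x^j}\chi_L\equiv\lambda\,\partial_{x^{j+1}}\chi_L\pmod{\chi_L}$, which $R$ satisfies identically and transports to $\chi_L$ through the unit $Q$. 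What the paper's route buys: all of its objects are manifestly real, jointly analytic matrix expressions, so it sidesteps the functional-calculus technicalities you rightly flag at the end --- the composition law $v_i(m(L))=(v_i\circ m)(L)$, the parametrized Weierstrass (or Hermite-interpolation) division needed to make the remainder coefficients $G_p(f,x)$ jointly analytic in $(f,x)$, and the conjugation symmetry at complex eigenvalues; these are standard facts, but they must be spelled out for your argument to be complete.
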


This theorem gives  local description for all $\gl$-regular Nijenhuis operators and therefore  provides a ``list'' of  all possible singularities that can occur for $\gl$-regular operators (Example \ref{exam:5.1} demonstrates how it works in practice).   One should, however,  remember that the first companion form for a Nijenhuis operator $L$ is not unique.  In other words,   different companion forms can be equivalent. Speaking in rigorous terms, on the space of all (Nijenhuis) companion forms $\Lfirst$ given by \eqref{first}, we can introduce a natural action of the groupoid that consists of coordinate transformations sending one companion form into another. Local classification of $\gl$-regular operators in proper sense amounts to the orbit classification for this action.  For $n\ge 3$, we hope to address this problem elsewhere.  

In the two-dimensional case, which is somehow rather special,  the local classification of $\gl$-regular Nijenhuis operators is obtained in Section \ref{sect:dim2}, Theorem \ref{thm:dim2}.  In addition to three (algebraically) generic types of $\gl$-regular operators, this theorem describes five types\footnote{The other two series  $L_{\mathrm{nil}}$ and $N$ from Theorem  \ref{thm:dim2}  are not singular as the algebraic type of these operators does not change, at each point the operator is a $2\times 2$ Jordan block.} of singular points  (series $L_{\mathrm{nc}}$, $M$, $O$, $P$ and $S$) for $\gl$-regular operators in dimension 2.   It appears that locally every  $\gl$-regular Nijenhuis operator can be reduced to an explicit polynomial canonical form,  which is quite different from the companion form.  Our choice is explained by the following natural reason.  The functions $f_1$ and $f_2$ involved in  $\Lfirst$  are solutions of \eqref{first_set}  and Theorem \ref{main:2} suggests that they can be found explicitly only in exceptional cases. Despite its elegance and convenience for various theoretical purposes, the companion form $\Lfirst$ does not provide description in elementary functions. However, such a description can be achieved by an appropriate change of variables and that is what  Theorem \ref{thm:dim2} does. 

Based on this theorem we obtain the following global description of Nijenhuis operators on closed two-dimensional manifolds.

\begin{Theorem}
\label{main:3}
Let  $(\mathsf{M}^2, L)$ be a closed connected $\gl$-regular Nijenhuis 2-manifold.   Then one of the following holds:
\begin{enumerate}
\item  $\mathsf{M}^2$ is orientable and  $L = \alpha \operatorname{Id} + \beta A$,  where $A$ is a complex structure on $\mathsf{M}^2$ and $\alpha, \beta\in \R$ are constants, $\beta\ne 0$.
\item  $\mathsf{M}^2$ is homeomorphic to either a torus or a Klein bottle and $L$ has two distinct real eigenvalues on $\mathsf{M}^2$ at each point.   
\item  $\mathsf{M}^2$ is homeomorphic to a torus and $L$ is similar to a Jordan block at each point of $\mathsf{M}^2$.
\item $\mathsf{M}^2$ is homeomorphic to either a torus or a Klein bottle and one of the eigenvalues of $L$ is constant.
\end{enumerate}
\end{Theorem}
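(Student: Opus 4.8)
The plan is to organise the proof around the discriminant $D=f_1^2+4f_2$ of the characteristic polynomial $\chi_L(\lambda)=\lambda^2-f_1\lambda-f_2$, which is a globally defined real-analytic function on $\mathsf M^2$, and around its zero set $\Sigma=\{D=0\}$, i.e.\ the set of singular points at which $L$ is a Jordan block. On the open set $U_-=\{D<0\}$ the operator has complex conjugate eigenvalues $\lambda,\bar\lambda$ with $\lambda=a+\ii b$, $a=\tfrac12 f_1$, $b=\tfrac12\sqrt{-D}>0$, and $J=b^{-1}(L-a\,\Id)$ is a genuine almost complex structure ($J^2=-\Id$ follows from Cayley--Hamilton, since $(L-a\,\Id)^2=\tfrac14 D\,\Id$). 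In dimension two $J$ is integrable, so $U_-$ carries a complex structure; the local description of the complex (non-singular) type in Theorem~\ref{thm:dim2} shows that $\lambda$ is holomorphic with respect to $J$, whence $b=\im\lambda$ is harmonic on $U_-$. This harmonicity is the main analytic engine of the proof.

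First I would exploit the maximum principle for $b$. If $U_-=\mathsf M$, then $J$ makes $\mathsf M$ a closed Riemann surface (in particular orientable), $b>0$ is a global harmonic function and hence constant; a holomorphic function with constant imaginary part is constant, so $a$ is constant as well, giving $L=\alpha\,\Id+\beta A$ with $A=J$, $\alpha=a$, $\beta=b\neq0$ --- this is Case~1. If instead $\emptyset\neq U_-\neq\mathsf M$, I pick a connected component $W$ of $U_-$; then $\overline W$ is compact, $b$ is continuous on $\overline W$, harmonic and strictly positive on $W$, and $b\equiv0$ on $\partial W\subseteq\Sigma$, contradicting the maximum principle. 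Hence $U_-$ is either all of $\mathsf M$ (Case~1, just treated) or empty; in particular, as soon as $L$ has complex eigenvalues somewhere it has them everywhere.

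It then remains to treat $D\ge0$, i.e.\ real eigenvalues everywhere, split according to $\Sigma$. If $\Sigma=\emptyset$ then $D>0$ and $L$ has two distinct real eigenvalues with two transverse eigenline fields; a closed surface admits a line field only if $\chi(\mathsf M)=0$, so $\mathsf M$ is a torus or a Klein bottle --- Case~2. If $\Sigma=\mathsf M$ then $D\equiv0$ and $N=L-\tfrac12 f_1\,\Id$ is a nowhere-vanishing nilpotent with $N^2=0$; thus $\ell:=\ker N=\Image N$ is a global line field, forcing $\chi(\mathsf M)=0$, while the isomorphism $T\mathsf M/\ell\xrightarrow{\;N\;}\ell$ gives $\Lambda^2 T\mathsf M\cong\ell^{\otimes2}$, so $w_1(\mathsf M)=2\,w_1(\ell)=0$ and $\mathsf M$ is orientable, hence a torus --- Case~3.

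The remaining and hardest case is $\emptyset\neq\Sigma\subsetneq\mathsf M$, which should yield Case~4. Here $D\ge0$ with equality exactly on $\Sigma$, and the open set $U_+=\mathsf M\setminus\Sigma$ of distinct real eigenvalues is non-empty. The plan is to invoke the local classification of singular points in Theorem~\ref{thm:dim2}: among the singular series the only local model on which $D$ changes sign (the model $\bigl(\begin{smallmatrix}x&1\\ y&0\end{smallmatrix}\bigr)$) has non-constant eigenvalues, and it is excluded here because the previous step forbids a non-empty complex region adjacent to $\Sigma$; in each of the surviving models one eigenvalue is constant on a neighbourhood of the singular point. Thus the continuous eigenvalue branch that is constant near $\Sigma$ should extend, by real-analyticity along the components of $U_+$ and continuity across $\Sigma$, to a single eigenvalue $\lambda\equiv c$ on all of $\mathsf M$. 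Once this is known, $L-c\,\Id$ has rank one everywhere (as $L$ is $\gl$-regular), so $\ker(L-c\,\Id)$ is a global line field and again $\chi(\mathsf M)=0$, giving a torus or a Klein bottle --- Case~4; no orientability is forced, since off $\Sigma$ one has $\ker(L-c\,\Id)\neq\Image(L-c\,\Id)$, which is why the Klein bottle survives here. The main obstacle is precisely this globalisation: matching the locally constant eigenvalue branches across the possibly complicated singular set $\Sigma$ and the components of $U_+$ into one globally constant eigenvalue, for which I would combine the real-analytic continuation of $\lambda_\pm=a\pm\tfrac12\sqrt D$ with the Nijenhuis property that each eigenvalue is constant along the eigendistribution of the other.
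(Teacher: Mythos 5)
Your handling of the complex region (harmonicity of $b=\im\lambda$ plus the maximum principle) and of the two globally non-singular real cases is sound, and is in places more self-contained than the paper, which cites \cite[Theorem 6.1]{Nijenhuis1} for the constancy of complex eigenvalues. The genuine gap is in the case $\emptyset\neq\Sigma\subsetneq\mathsf M^2$. Your dichotomy --- ``either $D$ changes sign near a singular point (excluded because no complex region can border $\Sigma$), or the surviving local model has a locally constant eigenvalue (leading to Case 4)'' --- is false. Theorem \ref{thm:dim2} contains local models, namely the whole series $O$ and suitable subfamilies of $P$ and $S$, for which $D=v^2+4u$ is nonnegative near the singular point, strictly positive off the singular curve, and for which \emph{neither} eigenvalue is constant. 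Concretely, for $O^{d,\epsilon}_{k,\mathrm{c}}$ one has $u=\epsilon y^d$, $v=y^k\bigl(\alpha x y^{k-1}+c_{k-1}y^{k-1}+\dots+c_0\bigr)$ with $c_0\neq 0$ and $d\geq 2k+1$, so
\[
D \;=\; y^{2k}\bigl(\alpha x y^{k-1}+c_{k-1}y^{k-1}+\dots+c_0\bigr)^2 \;+\; 4\epsilon\, y^{d} \;>\;0
\]
for all small $(x,y)$ with $y\neq 0$, since the first term dominates; and no eigenvalue can be constant, because a constant eigenvalue would have to equal $0$ (its value at the singular point), forcing $\det L=-u=-\epsilon y^d\equiv 0$, a contradiction. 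So your maximum-principle step never sees these models, and your claim that the remaining models have a constant eigenvalue fails exactly on them. Note that such a point is incompatible with \emph{all four} cases of the theorem, so excluding these germs is not optional --- and since they are perfectly consistent $\gl$-regular Nijenhuis germs, no local argument can do it.

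What is missing is the paper's global argument (Proposition \ref{prop4.1}): each component of $\mathsf{Sing}$ on a closed surface is an embedded circle, and in the canonical coordinates of series $O$, $P$, $S$, translating the base point along the singular curve $\{y=0\}$ by $x_0$ changes nothing in the normal form except the modulus $c_{k-1}$, which undergoes the shift $c_{k-1}\mapsto c_{k-1}+\alpha x_0$ with $\alpha\neq 0$. A quantity that drifts strictly monotonically along the curve cannot return to its initial value after a full loop around a compact component of $\mathsf{Sing}$, so $O$, $P$, $S$ points cannot occur on a closed surface; only the $M$-series survives, and for it the constant eigenvalue is immediate (locally $\det(L-\lambda_0\Id)\equiv 0$, hence globally by analyticity and connectedness, which also settles the globalisation you were worried about). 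Without this monotonicity argument, or an equivalent substitute, your proof of Case 4 --- and hence of the theorem --- is incomplete; your closing idea that ``each eigenvalue is constant along the eigendistribution of the other'' cannot repair it, since near an $O$-point both eigenvalues genuinely vary, so there is no locally constant branch to globalise in the first place.
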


In the first three cases, the algebraic type of $L$ remains the same at each point of the surface. In other words, the set of singular points is empty.  In the forth case, the eigenvalues of $L$ may collide and we show in Proposition \ref{prop4.2} that the corresponding singular point necessarily belongs to the $M$-series, one of five series from Theorem \ref{thm:dim2}. In particular,  the other types of singular points cannot occur on compact surfaces.  

Theorem \ref{main:3}  provides topological  obstructions for existence of (non-trivial) $\gl$-regular Nijenhuis operators in dimension 2.

\begin{Corollary}  Let $\mathsf{M}^2$ be either a sphere or a closed Riemann surface of genus $\ge 2$. Then $\mathsf{M}^2$ cannot carry any $\gl$-regular Nijenhuis operator $L$ except for $L = \alpha \operatorname{Id} + \beta A$,  where $A$ is a complex structure on $\mathsf{M}^2$ and $\alpha, \beta\in \R$, $\beta\ne 0$.
\end{Corollary}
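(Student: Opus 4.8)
The plan is to obtain the corollary as an immediate consequence of Theorem \ref{main:3} by eliminating the topologically forbidden alternatives. First I would read off from the four cases of that theorem their topological content: case (1) is the exceptional form $L = \alpha \operatorname{Id} + \beta A$ and is allowed on any orientable surface carrying a complex structure, whereas cases (2), (3) and (4) each \emph{require} the underlying surface $\mathsf{M}^2$ to be homeomorphic to a torus or a Klein bottle.

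The separating invariant I would then use is the Euler characteristic. The torus and the Klein bottle both satisfy $\chi = 0$, while $\chi(S^2) = 2$ and a closed orientable surface of genus $g \ge 2$ has $\chi = 2 - 2g \le -2$. Since $\chi$ is a homeomorphism invariant, neither the sphere nor a genus $\ge 2$ surface is homeomorphic to a torus or a Klein bottle. Hence, for the surfaces in the hypothesis, cases (2), (3) and (4) of Theorem \ref{main:3} are impossible, and only case (1) can hold; that is, $L = \alpha \operatorname{Id} + \beta A$ with $A$ a complex structure and $\beta \ne 0$, which is exactly the asserted exception.

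Finally I would remark that this exception is not vacuous and really does describe admissible operators: both the sphere (as $\mathbb{CP}^1$) and every closed orientable surface of genus $g \ge 2$ admit a complex structure $A$, and for constants $\alpha, \beta$ with $\beta \ne 0$ the operator $\alpha \operatorname{Id} + \beta A$ has the complex-conjugate eigenvalues $\alpha \pm \ii \beta$ (a single Jordan block each, hence $\gl$-regular) and is Nijenhuis because $\mathcal N_{\alpha \operatorname{Id} + \beta A} = \beta^2 \mathcal N_A = 0$ for an integrable $A$. There is no genuine obstacle in this argument once Theorem \ref{main:3} is available: the whole content is the Euler-characteristic dichotomy separating $\{\text{torus, Klein bottle}\}$ from $\{\text{sphere, genus} \ge 2\}$, and the only point demanding any care is to confirm that these two families of topological types are disjoint — which $\chi$ settles at a glance.
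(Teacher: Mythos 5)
Your proposal is correct and follows exactly the route the paper intends: the corollary is stated as an immediate consequence of Theorem \ref{main:3}, since cases 2--4 of that theorem force $\mathsf{M}^2$ to be a torus or a Klein bottle, which the sphere and surfaces of genus $\ge 2$ are not (your Euler-characteristic check makes this explicit), leaving only case 1. The closing remark on non-vacuousness of the exception is accurate but not required, since the corollary only asserts that any $\gl$-regular Nijenhuis operator on such a surface must have the stated form.
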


\begin{Corollary} A non-orientable closed 2-manifold different from a Klein bottle cannot carry any $\gl$-regular Nijenhuis operator.
\end{Corollary}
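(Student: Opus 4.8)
The plan is to read the statement off the classification in Theorem \ref{main:3}; once that theorem is available the corollary is a one-line case analysis on the topological type of the underlying surface. Throughout I understand ``closed surface'' to mean a closed \emph{connected} surface, as in the classification of surfaces and in the hypotheses of Theorem \ref{main:3}. So I would argue by contradiction: suppose a closed non-orientable surface $\mathsf{M}^2$ not homeomorphic to a Klein bottle carries a $\gl$-regular Nijenhuis operator $L$. Then $(\mathsf{M}^2, L)$ meets the hypotheses of Theorem \ref{main:3}, and therefore falls under one of its four cases.

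I then dispose of the four alternatives in turn. Case (1) requires $\mathsf{M}^2$ to be orientable, contradicting our assumption. In each of cases (2), (3) and (4) the theorem forces $\mathsf{M}^2$ to be homeomorphic either to a torus or to a Klein bottle. The torus is orientable and hence ruled out, and the Klein bottle is ruled out by the standing hypothesis. Since the four cases are exhaustive, all of them are impossible, so no such operator $L$ can exist, which is exactly the claim.

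All the difficulty is concentrated in Theorem \ref{main:3}, which supplies the list of admissible topologies; the present deduction uses nothing beyond that list together with the elementary fact that the torus is orientable while every non-orientable closed surface other than the Klein bottle (for instance the real projective plane, or any connected sum of three or more projective planes) is simply absent from the enumerated models. Consequently I expect no genuine obstacle in the argument itself: the corollary is a direct corollary in the literal sense, and the case analysis above is essentially the whole proof. The only point worth a word of caution is the connectedness convention, since for a disjoint union one could in principle assemble admissible components; adopting the standard reading of ``closed surface'' as connected removes this ambiguity and makes the reduction to Theorem \ref{main:3} immediate.
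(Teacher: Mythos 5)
Your proof is correct and matches the paper's intent exactly: the corollary is stated immediately after Theorem \ref{main:3} precisely because it follows by the same case analysis you give (Case 1 excluded by non-orientability, Cases 2--4 excluded since the torus is orientable and the Klein bottle is excluded by hypothesis). Your remark on the connectedness convention is a reasonable point of care and does not affect the argument.
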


Another result of our paper is description of various scenarios for Nijenhuis perturbations of a Jordan block. Assume that at a given point $\mathsf{p}$,  all the coefficients $f_1,\dots, f_n$ of the characteristic polynomial of  a Nijenhuis operator $L$ vanish so that $L(\mathsf{p})$ is similar to a Jordan block with zero eigenvalues.  What can we say about the algebraic type of $L$ at a generic point $\mathsf{q}\in U(\mathsf{p})$?   Formula \eqref{eq:nilp_pert}  gives an example when $L(\mathsf{q})$ typically becomes semisimple, moreover for any prescribed collection of eigenvalues $\lambda_1,\dots,\lambda_n$ (with arbitrary multiplicities and including complex conjugate pairs) there exists exactly one point $\mathsf{q}$ that realises this spectrum of $L$.
This scenario coincides with the versal deformation of a Jordan block  in terms of V.Arnold \cite{Arnold}.  But can $L$ split into two Jordan blocks?  Or, more generally, does there exist a Nijenhuis perturbation of a Jordan block $J_0=L(\mathsf{p})$ such that at a generic point  $\mathsf{q}\in U(\mathsf{p})$ the operator $L(\mathsf{q})$ has a prescribed algebraic type?

We  use Theorem \ref{main:2} to show that the answer is positive:  all scenarios are possible. To state this result in a rigorous way, recall that in the space of all $n\times n$ matrices,  which we interpret as the Lie algebra $\gl(n,\R)$,  we can introduce a natural partition $\gl(n,\R) = \sqcup_\alpha W_\alpha$ into families of adjoint orbits having the same algebraic type (Segre characteristic). Such families are sometimes called {\it layers}. For regular orbits, their algebraic type is defined by multiplicities $k_1,\dots, k_s$ of eigenvalues\footnote{Though we deal with real matrices, we make no difference between complex and real roots.}, so that we can write
$$
\gl(n,\R)^{\mathrm{reg}} = \bigsqcup_{\sum k_s = n} W_{k_1,\dots,k_s}, \qquad    k_1\le \dots \le k_s, \ s\in \mathbb N, \ k_i\in\mathbb N,
$$
where $W_{k_1,\dots,k_s}\subset \gl(n,\R)$ is the subset of $\gl$-regular operators having $s$ distinct eigenvalues with multiplicities $k_1,\dots, k_s$ (regularity will automatically imply that each eigenvalue contributes exactly one Jordan block into the Jordan normal form of the operator).  Notice that the Jordan block $J_0$ belongs to the closure of each regular layer. 

\begin{Theorem}
\label{main:4}
For any regular layer $W_{k_1,\dots,k_s}\subset \gl(n,\R)$ there exists a Nijenhuis operator $L$ defined in a small neighbourhood of $0\in\R^n$ such that $L(0)=J_0$ and $L(x)\in \overline{W}_{k_1,\dots,k_s}$ for all $x\in U(0)$, where $\overline{W}_{k_1,\dots,k_s}$ is the closure of $W_{k_1,\dots,k_s}$ (in usual or Zariski topology).
\end{Theorem}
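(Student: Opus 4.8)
The plan is to realise the prescribed layer by a single application of Theorem \ref{main:2}, choosing the initial data so that along the $x^n$-axis the operator runs through the layer $W_{k_1,\dots,k_s}$ and degenerates to $J_0$ at the origin. Concretely, I would fix $s$ pairwise distinct constants $a_1,\dots,a_s\in\R$ and define the functions $v_1(t),\dots,v_n(t)$ as the (polynomial, hence real analytic) coefficients determined by
\begin{equation*}
r(\lambda,t)=\lambda^n-v_1(t)\lambda^{n-1}-\dots-v_n(t)=\prod_{i=1}^s\bigl(\lambda-a_i t\bigr)^{k_i}.
\end{equation*}
Then $v_i(0)=0$ for all $i$, while for $t\ne 0$ the polynomial $r(\cdot,t)$ has exactly $s$ distinct roots $a_1t,\dots,a_st$ with the prescribed multiplicities $k_1,\dots,k_s$.

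Feeding these $v_i$ into Theorem \ref{main:2} produces real analytic functions $f_1(x),\dots,f_n(x)$ solving the PDE system \eqref{first_set}; by the implication (ii)$\Rightarrow$(i) of Theorem \ref{main:1}, the operator $L=\Lfirst(x)$ in first companion form \eqref{first} is then Nijenhuis, and being a companion matrix it is $\gl$-regular at every point. Since $f_i(0)=v_i(0)=0$, the matrix $\Lfirst(0)$ is nilpotent, i.e. $L(0)=J_0$. It remains to control the algebraic type of $L(x)$ off the axis.

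For this I would exploit the defining matrix identity itself. Writing $M=m(L,x)$ with $m(\lambda,x)=x^1\lambda^{n-1}+\dots+x^n$, the relation $r(L,M)=0$ becomes $\Phi(L,x)=0$, where
\begin{equation*}
\Phi(\lambda,x)=r\bigl(\lambda,m(\lambda,x)\bigr)=\prod_{i=1}^s\bigl(\lambda-a_i\,m(\lambda,x)\bigr)^{k_i}
\end{equation*}
is a genuine polynomial in $\lambda$ (the $v_i$ being polynomials). Since $L$ is $\gl$-regular its minimal polynomial equals $\chi_L$, so $\chi_L(\lambda)$ divides $\Phi(\lambda,x)$. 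Each factor $g_i(\lambda,x)=\lambda-a_i m(\lambda,x)$ satisfies $g_i(\lambda,0)=\lambda$ and $\partial_\lambda g_i(0,0)=1$, so by the implicit function theorem it has, for $x$ near $0$, a unique simple root $\lambda_i(x)$ near $\lambda=0$ with $\lambda_i(0)=0$; hence near $\lambda=0$ one has $\Phi(\lambda,x)=U(\lambda,x)\prod_i(\lambda-\lambda_i(x))^{k_i}$ with $U$ non-vanishing. Combining the divisibility $\chi_L\mid\Phi$ with the facts that $\deg\chi_L=n=\sum_i k_i$ and that all eigenvalues of $L(x)$ lie near $0$ (as $L(x)\to J_0$), I conclude
\begin{equation*}
\chi_{L(x)}(\lambda)=\prod_{i=1}^s\bigl(\lambda-\lambda_i(x)\bigr)^{k_i},
\end{equation*}
so the eigenvalues of $L(x)$ are exactly $\lambda_1(x),\dots,\lambda_s(x)$ with algebraic multiplicities $k_1,\dots,k_s$.

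Finally, the discriminant $\prod_{i<j}(\lambda_i(x)-\lambda_j(x))$ is real analytic and not identically zero (on the $x^n$-axis one has $\lambda_i(x)=a_i x^n$, so it equals $(x^n)^{\binom{s}{2}}\prod_{i<j}(a_i-a_j)\ne 0$); hence the $\lambda_i(x)$ are pairwise distinct on an open dense set, where $L(x)\in W_{k_1,\dots,k_s}$, and by continuity $L(x)\in\overline{W}_{k_1,\dots,k_s}$ for every $x$ in the neighbourhood. I expect the main obstacle to be the multiplicity bookkeeping: one must verify not merely that the eigenvalues of $L(x)$ solve $\Phi(\lambda,x)=0$, but that each $\lambda_i(x)$ appears in $\chi_L$ with multiplicity \emph{exactly} $k_i$. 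This is precisely what pins $L(x)$ to the correct layer rather than a larger one, and it is secured by the simple-root analysis of the $g_i$ together with the degree count $\deg\chi_L=n$.
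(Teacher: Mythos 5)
Your proposal is correct and takes essentially the same route as the paper: choose initial data on the $x^n$-axis whose characteristic polynomial factors as $\prod_i(\lambda-\mu_i(\tau))^{k_i}$ (your $\mu_i(\tau)=a_i\tau$ is the linear instance of this, cf.\ Example \ref{exam:5.1}), feed it into Theorem \ref{main:2}, recover the eigenvalues $\lambda_i(x)$ from the factorised relation $r(L,M)=0$ via the implicit function theorem, and conclude by density and continuity. The only difference is that your divisibility-plus-degree-count argument (minimal polynomial $=\chi_L$ divides $\Phi$) makes fully rigorous the multiplicity bookkeeping that the paper treats informally (``no other eigenvalues may occur; the multiplicities will be as expected since this condition is fulfilled on the initial line''), a sharpening in precision rather than a different method.
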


The structure of the paper is as follows.  The proofs of Theorems \ref{main:1} and \ref{main:2} are given in Sections \ref{sect:proof1} and \ref{sect:proof2} respectively.  Section \ref{sect:perturb}  is devoted to Nijenhuis perturbations of a Jordan block and contain the proof of 
 Theorem \ref{main:4}. In Section \ref{sect:dim2} we obtain local classification of all $\gl$-regular Nijenhuis operators in dimension 2 and prove Theorem \ref{main:3}. These sections are mainly independent on each other  and contain no cross references.  We conclude the paper with Appendix devoted to some applications of Theorem \ref{main:2} to quasilinear systems of hydrodynamic type $u_t = L(u) u_x$ in the case when $L(u)$ is not necessarily diagonalisable Nijenhuis operator.

{\bf Acknowledgements.}  We thank Jenya Ferapontov and Artie Prendergast-Smith for their valuable comments and explanations.  
The most essential steps resulted in this paper would not have been done without outstanding research environment offered to us by the Institute of Advanced Studies, Loughborough University and Centro Internazionale per la Ricerca Matematica, Trento.  
We are also grateful to Jena Universit\"at, in particular, Ostpartnerschaft programm for supporting our research on Nijenhuis Geometry for several years. The work of Alexey Bolsinov and Andrey Konyaev was  supported by Russian Science Foundation (project 17-11-01303).


\section{Outlook and motivation}\label{sect:intro}

Our motivation for studying $\gl$-regular Nijenhuis operators was based on a very naive question: ``What is the most natural genericity assumption for  $(1,1)$-tensor fields similar to non-degeneracy  of bilinear forms, symmetric or skew-symmetric?''  In general algebraic context,  the latter condition simply means that a bilinear  form belongs to the ``largest'' orbit of the natural $\mathrm{GL}(n)$-action and hence is the most typical. As a matter of fact such an orbit, in this case, is open.  For operators, there are no open orbits,  but we may still consider $\mathrm{GL}(n)$-orbits of maximal dimension, which is exactly the $\gl$-regularity assumption\footnote{The non-degeneracy assumption $\det L\ne 0$ is much less relevant in Nijenhuis geometry as many problems one has to deal with are invariant w.r.t. shifts 
$L \mapsto L + \mathrm{const}\cdot\Id$.
}.  
In this view,  $\gl$-regular operators can be thought of as natural analogs of symplectic forms and (pseudo)-Riemannian metrics.  

Another naive way to look at $(1,1)$-tensor fields is to think of them as families of matrices depending on parameters (coordinates on the manifold). Then the next natural question would obviously be:  ``Which bifurcations are typical in such families?''.  The most typical bifurcation is a collision of two (or several) eigenvalues resulting in appearance of a Jordan block.  That is exactly a singularity which we may observe in the case of $\gl$-regular Nijenhuis operators.  One could, of course, avoid collision of eigenvalues by requiring that $L$ has no multiple eigenvalues, but would make the definition too rigid and exclude many important examples and interesting phenomena. It is worth mentioning that the complement to the set of matrices with no multiple roots has codimension one, whereas the complement to the set of $\gl$-regular matrices  is much smaller and has codimension 3. 

The ``converse'' question, naturally appearing in applications, can be stated as follows: ``What happens to a Jordan block under a perturbation?''.  The answer depends on the number of parameters involved in perturbation and additional assumptions imposed on it.  We refer to the famous paper by Arnold \cite{Arnold} devoted to this subject which contains, in particular, an elegant solution in terms of versal deformations. In the context of Nijenhuis geometry,  it is quite natural to ask ``What are {\it Nijenhuis} perturbations of a Jordan block? Can we describe {\it all} of them?  Which of them are generic ({\it versal} in the sense of Arnold)?''   This is again a question on $\gl$-regular Nijenhuis operators.  It is amazing that the answer turns out to be very similar to that given by Arnold:   there is a very simple generic Nijenhuis perturbation of a Jordan block (see formula \eqref{eq:nilp_pert} and Proposition \ref{prop:5.1}), which is unique and coincides exactly with the one given in  \cite{Arnold}.  All the others can be derived from this canonical one by solving a system of integrable PDEs.  We give a purely algebraic algorithm (see Theorem \ref{main:2}) how to do it for arbitrary initial condition, i.e., for finding {\it all} the solutions.  

We also want to emphasise that $\gl$-regular operators share many common properties.  There are many facts well known for diagonalisable operators with simple spectrum that still hold true for $\gl$-regular operators.   If an operator is diagonalisable almost everywhere and has no multiple eigenvalues, then some  (but not all!) of these results can be transferred to $\gl$-regular case by continuity.  However, even this procedure is often non-trivial as one needs to show that ``transferring objects'',  e.g.  conservation laws or commuting flows,  remain smooth and independent (linearly or functionally or otherwise), i.e., they neither explode nor blow up.   Moreover, there are many occasions when a given Nijenhuis operator is not diagonalisable at all,  but $\gl$-regularity still guaranties good properties. 

For this reason we are trying to use ``invariant language'' in our proof. This makes things technically a bit more complicated (for Nijenhuis operators written in diagonal form some of our proofs would be just one line) but, as a reward, we manage to cover many different cases by using one universal approach suitable for all Nijenhuis operators satisfying just one additional condition, namely $\gl$-regularity.


We are confident that our results can and will have many applications. Indeed, Nijenhuis operators naturally appear in many unrelated topics in differential geometry and  mathematical physics.  A possible explanation for this ``experimentally observed phenomenon'' is as follows. For  many  geometric  systems  of partial differential  equations,  their  coefficients are constructed from a certain operator, i.e., $(1,1)$--tensor field $L=\bigl(L^i_j(u)\bigr)$.   If such a system is invariant with respect to diffeomorphisms, then the compatibility and involutivity conditions can be invariantly written   in terms of $L$.  The point is that vanishing of the Nijenhuis torsion of $L$ is, in a certain sense (see e.g. discussion in the introduction of \cite{Nijenhuis1}), the simplest non-trivial condition of this kind. 

This ``experimental observation'' suggests that any progress in Nijenhuis geometry might and should be applied in different areas where Nijenhuis operators have appeared, by combining the questions/methods from those topics with new results on Nijenhuis operators.

Until very recently,  the list of known 
 results in Nijenhuis geometry was very limited: Haantjes  theorem \cite{haant}, Newlander--Nirenberg theorem \cite{nirfro} and Thompson theorem  \cite{Thompson}.  
These results have been extensively used as a simplifying ansatz in those situations where Nijenhuis operators appear: customary, one works with those coordinates in which  
the operator takes the ``best''  possible form provided by these theorems (e.g., in the case of Haantjes theorem, 
 $L$ reduces to diagonal form with diagonal elements  $\lambda_i=\lambda_i(u_i)$ and  in the case of Thomson and  Newlander--Nirenberg Theorems, one works in a coordinate system where $L$ has constant entries).

The assumptions  of   Haantjes, Newlander--Nirenberg  and   Thompson theorems  essentially limit  their applications.  They all  require that $L$ is algebraically stable, i.e.,   has the same Segre characteristic at every point. Moreover, they have strong conditions on the Segre characteristic: in  Haantjes  and Newlander--Nirenberg  theorems,  the operator $L$ is semi-simple (diagonalisable  over complex numbers).  Thompson and Newlander--Nirenberg theorems  assume that the eigenvalues of $L$ are constant.

This paper, as well as its  predecessors \cite{Nijenhuis1, Nijenhuis3, konyaev}, 
aim to repair this  situation. An important ingredient  of our  strategy described in \cite{Nijenhuis1} is to develop tools to  study and describe Nijenhuis operators near  those points where the Segre characteristic changes  (singular points in the 
terminology of \cite{Nijenhuis1}) and also on closed manifolds.  Any such tool can be applied wherever Nijenhuis operators naturally appear.

Obviously, there are many different types of singularities for Nijenhuis operators. We started our research with two opposite cases: the paper  \cite{konyaev} (see also \cite[\S 5]{Nijenhuis1})  studies the so-called singular points of scalar type, i.e., those where
 the operator $L$ vanishes (we may think of them as the {\it most} singular points). In the present 
 paper  we come from the other side and consider singular points at which the operator $L$ remains $\gl$-regular  \cite[Definition 2.9]{Nijenhuis1} (the {\it least} singular points). 
Our first main result, Theorem \ref{main:1}, provides a common framework for studying such singularities:   it allows one to assume without loss of generality that $L$ locally takes the first or second 
companion form (see \eqref{first} and \eqref{second}). Similar to the diagonal form from the Haantjes theorem, the companion forms  \eqref{first} and \eqref{second} depend on an arbitrary choice of $n$ functions of one variable. In contrast to the Haantjes theorem, they allow bifurcations of the eigenvalues, and in Section \ref{sect:perturb}  we discuss   the freedom in such bifurcations.

 A demonstration that our strategy works is Theorem \ref{thm:dim2}  that describes all possible singularities for $\gl$-regular Nijenhuis operators in dimension 2. As a corollary we have  Theorem \ref{main:3} on topological obstructions for the existence of regular Nijenhuis operators on closed two-dimensional surfaces.

We expect many applications of our results. For example, with the help of  Theorem \ref{main:3} one can easily reprove most results of the paper \cite{Matveevjpn} devoted to geodesically equivalent metrics on  two dimensional semi-riemannian manifolds.  By \cite{Benenti},  a pair of such metrics allows one to construct a Nijenhuis operator. One can easily show, applying the trick from \cite[\S 3.3]{Matveev2016}, that  on a closed surface this operator is always $\gl$-regular provided the metrics are semi-riemannian.  Case 1   of  Theorem \ref{main:3} corresponds to a trivial geodesic equivalence, and  cases 2, 3 and 4, translated to the language of geodesically equivalent metrics, imply most results  in \cite{Matveevjpn} and   in particular allow  to prove the natural generalisation of the projective Obata conjecture for the 2-torus.

We expect that our results may be effectively used in the theory of (infinite-dimensional) integrable systems of hydrodynamic type. They are  partial differential equation systems of the form 
\begin{equation}
\label{eq:hydro} 
u^i_t= \sum_{j} A_{j}^{i}(u)  u^j_{x}.
\end{equation}
where $u(t,x)= (u^1(t,x),...,u^n(t,x))$ is an unknown vector-function. In this case the matrix $A=A(u)$ can be seen as an operator on an $n$-dimensional manifold with local coordinates $(u^1,...,u^n)$.  The integrability of this system amounts to a certain condition on the operator $A$ (more general than vanishing of the Nijenhuis torsion, see \cite{Tsarev}).

 One of the standard  methods to work with systems \eqref{eq:hydro} is based on the so-called Riemann invariants which are closely related to finding a polynomial $p$  with coefficients depending on $u$ such that $p(A)$ is a  Nijenhuis operator (the eigenvalues of the operator $p(A)$ are precisely the Riemann invariants).

 The overwhelming majority of results on integrable systems of hydrodynamic type assume that the operator $A$ is simple (i.e., has $n$ different eigenvalues).  Our results allow one to avoid this assumption.  In particular, they can be applied to study stability of solutions 
 of  \eqref{eq:hydro} near the points where the eigenvalues collide. The ``proof of concept'' is, in fact, the Appendix where we demonstrate how it works in the simplest case, when the operator $A$ is itself a Nijenhuis operator. 

Notice that not diagonalisable but still $\gl$-regular operators naturally appear in differential geometry and mathematical physics in the context of integrable PDEs of type \eqref{eq:hydro}, see e.g. \cite{Fordy, Benney2, BialyMironov,  Benney, Tsarev}.   Moreover, they often resemble the companion form discussed in Theorem \ref{main:1}.


\section{Proof of Theorem \ref{main:1}}\label{sect:proof1}

First of all we observe that every operator  $\Lfirst$ given by \eqref{first} is Nijenhuis if and only if  relations \eqref{first_set} hold. And similarly, every operator  $\Lsecond$ given by \eqref{second} is Nijenhuis if  and only if \eqref{second_set} holds.  The verification of this fact is straightforward and we omit it.  In terms of Theorem  \ref{main:1} this means, in particular, that (ii) $\Rightarrow$ (i) and (iii) $\Rightarrow$ (i).    

It remain to show that every $\gl$-regular Nijenhuis operator $L$ can be (locally) reduced to either of the companion forms $\Lfirst$ and $\Lsecond$.   Since the proofs for $\Lfirst$ and $\Lsecond$ are rather similar, we will do reduction simultaneously for both of them following the same scheme.
 
Consider a $\gl$-regular Nijenhuis operator $L$ in a neghbourhood $U(\mathsf{p})$ of a point $\mathsf{p}\in \mathsf{M}$  and choose local coordinates $u=(u^1,\dots, u^n)$ in this neighbourhood.  Our goal is is to find coordinate transformations bringing $L$ to the first companion form \eqref{first} and second companion form \eqref{second}.

For the first companion form, such a coordinate transformation $u=u(x)$,  where $x=(x^1,\dots, x^n)$ is a new coordinate system, satisfies the following system of PDEs:
\begin{equation}
\label{eq:bols3.1}
{\left( \frac{\partial u}{\partial x}\right)}^{-1} L(u) \, {\left( \frac{\partial u}{\partial x}\right)} = {\Lfirst} (x),
\end{equation}
where ${\Lfirst}$ stands for the first companion form \eqref{first} and $\left( \frac{\partial u}{\partial x}\right)$ denotes the Jacobi matrix of the transformation $u=u(x)$:
\begin{equation*}
{\left( \frac{\partial u}{\partial x}\right)}   = \left(\begin{array}{cccc}
         u^1_{x^1} & u^1_{x^2} & \dots & u^1_{x^n}  \\
         u^2_{x^1} & u^2_{x^2} & \dots & u^2_{x^n}  \\
         \vdots & \vdots & \ddots & \vdots  \\
         u^n_{x^1} & u^n_{x^2} & \dots & u^n_{x^n}  \\
    \end{array}
    \right).
\end{equation*}
Here and throughout the paper, when doing matrix computation, we consider $u$ and $x$ as column-vectors,  also we use $u_{x^i}$ or $u^j_{x^i}$ for partial derivatives.

Rewriting \eqref{eq:bols3.1} as 
\begin{equation}
\label{eq:bols3.2'}
{\left( \frac{\partial u}{\partial x}\right)} {\Lfirst} = L{\left( \frac{\partial u}{\partial x}\right)} 
\end{equation} 
we see that the columns $u_{x^i}$ of ${\left( \frac{\partial u}{\partial x}\right)}$ satisfy the equations $L u_{x^i} = u_{x^{i - 1}}$ or equivalently
\begin{equation}
\label{eq:bols3.2}
u_{x^{n-k}} = L^k u_{x^n}, \quad \mbox{where }  L^k = \underbrace{L\cdot L\cdot \ldots \cdot L}_{k \ \mathrm{ times}}, \ k=1,\dots, n-1.
\end{equation}

\begin{Lemma}
\label{lem:bols2.1}
Systems \eqref{eq:bols3.2'} and  \eqref{eq:bols3.2} are equivalent.  In particular, \eqref{eq:bols3.1} is equivalent to \eqref{eq:bols3.2} provided the Jacobi matrix $\left( \frac{\partial u}{\partial x}\right)$ is invertible. 
\end{Lemma}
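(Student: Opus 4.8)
The plan is to prove the equivalence by reading the matrix identity \eqref{eq:bols3.2'} one column at a time and checking that every resulting vector equation is accounted for by \eqref{eq:bols3.2}, with exactly one redundancy that must be shown to be automatic.

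First I would denote the columns of the Jacobi matrix by $u_{x^1},\dots,u_{x^n}$ and compute both sides of \eqref{eq:bols3.2'} columnwise. Since the companion matrix $\Lfirst$ has $j$-th column equal to the standard basis vector $e_{j-1}$ for $j\ge 2$ and first column $(f_1,\dots,f_n)^\top$, the product $\left(\frac{\partial u}{\partial x}\right)\Lfirst$ has $j$-th column $u_{x^{j-1}}$ for $j=2,\dots,n$ and first column $\sum_{i=1}^n f_i\,u_{x^i}$, whereas $L\left(\frac{\partial u}{\partial x}\right)$ has $j$-th column $L u_{x^j}$. Equating columns therefore splits \eqref{eq:bols3.2'} into the $n-1$ recursion relations $L u_{x^j}=u_{x^{j-1}}$ for $j=2,\dots,n$ (columns $2$ through $n$) together with the single first-column relation
\[
L u_{x^1} = f_1 u_{x^1} + f_2 u_{x^2} + \dots + f_n u_{x^n}.
\]

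Next I would observe that the $n-1$ recursion relations are, after iteration, exactly \eqref{eq:bols3.2}: writing $i=n-k$ gives $u_{x^{n-k}}=L u_{x^{n-k+1}}=\dots=L^k u_{x^n}$. This already yields \eqref{eq:bols3.2'}$\Rightarrow$\eqref{eq:bols3.2} (simply discard the first-column relation), and reduces the converse to checking that the first-column relation is an automatic consequence of \eqref{eq:bols3.2}. The key point, and essentially the only thing to verify, is that this relation is the Cayley--Hamilton identity applied to the vector $u_{x^n}$: substituting $u_{x^1}=L^{n-1}u_{x^n}$ and $u_{x^i}=L^{n-i}u_{x^n}$ from \eqref{eq:bols3.2} turns it into
\[
L^n u_{x^n} = f_1 L^{n-1} u_{x^n} + \dots + f_n u_{x^n},
\]
which holds because $f_1,\dots,f_n$ are the coefficients of $\chi_L(\lambda)=\lambda^n - f_1\lambda^{n-1}-\dots-f_n$ and hence $\chi_L(L)=0$.

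This finishes the argument: the first-column relation carries no information beyond \eqref{eq:bols3.2}, so the two systems are equivalent, and when the Jacobi matrix is invertible both are in turn equivalent to the similarity relation \eqref{eq:bols3.1}. I expect the only substantive step to be recognising that the first column of \eqref{eq:bols3.2'} is exactly Cayley--Hamilton and is therefore redundant; everything else is a direct columnwise comparison. It is worth noting that $\gl$-regularity is not needed for this particular lemma, since Cayley--Hamilton holds for any matrix; it will matter later, when one needs $u_{x^n}$ to be a cyclic vector so that the constructed transformation is genuinely invertible.
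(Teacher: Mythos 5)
Your proof is correct and follows essentially the same route as the paper: a columnwise reading of \eqref{eq:bols3.2'} showing that columns $2,\dots,n$ encode the recursion $Lu_{x^j}=u_{x^{j-1}}$ (equivalently \eqref{eq:bols3.2}), while the extra first-column relation $\sum_i f_i u_{x^i}=Lu_{x^1}$ becomes $\chi_L(L)u_{x^n}=0$ after substitution and is therefore automatic by Cayley--Hamilton. Your closing observation that $\gl$-regularity is not needed here (only later, to make $u_{x^n}$ a cyclic vector) is also accurate and consistent with how the paper uses the lemma.
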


\begin{proof}  By construction, \eqref{eq:bols3.2} simply means that all the columns of the matrices in the left  and right hand sides of \eqref{eq:bols3.2'} coincides except for the first column.  In other words,  system \eqref{eq:bols3.2'}, as compared to \eqref{eq:bols3.2}, contains one additional vector relation for the first columns of l.h.s. and r.h.s. of \eqref{eq:bols3.2}, namely:
\begin{equation}
\label{eq:22'}
f_1 u_{x^1} + f_2 u_{x^2} + \dots + f_n u_{x^n} = L u_{x^1}
\end{equation}

We need to show that this relation follows from  \eqref{eq:bols3.2}.  This is an easy corollary of the Cayley--Hamilton theorem. Indeed,    substituting $u_{x^{n-k}} = L^k u_{x^n}$ into  \eqref{eq:22'} gives
$$
f_1 L^{n-1} u_{x^n} + f_2 L^{n-2} u_{x^n} + \dots + f_n u_{x^n} = L^n u_{x^n}
$$
or equivalently
$$
\bigl( L^n - f_1 L^{n-1}  - f_2 L^{n-2}  - \dots - f_n \operatorname{Id}  \bigr) u_{x^n} = \chi_{L} (L) u_{x^n} = 0,
$$
which holds true automatically by the Cayley--Hamilton theorem. \end{proof}


Similarly,   to bring $L$ to the second companion form, we need to find an invertible transformation $u=u(x)$ such that 
\begin{equation}
\label{eq:bols3.3}
{\left( \frac{\partial u}{\partial x}\right)}^{-1} L(u) \, {\left( \frac{\partial u}{\partial x}\right)} = {\Lsecond} (x),
\end{equation}
where ${\Lsecond}$ is given by \eqref{second}. Proceeding in a similar way as above, we get $L{\left( \frac{\partial u}{\partial x}\right)} = {\left( \frac{\partial u}{\partial x}\right)} {\Lsecond}$. This gives the following relation on the columns of the Jacobi matrix: $L u_{x_i} = u_{x_{i - 1}} - f_{n-i} u_{x^n}$. For $i = n$ we get $L u_{x^n} = u_{x^{n - 1}} + f_1 u_{x^{n - 1}}$, which yields 
$$
u_{x^{n - 1}}=M_1 u_{x^n}\quad \mbox{with } M_1=L - f_1 {\cdot} \operatorname{Id}.
$$ 
Next for $i=n-1$,  we get $L u_{x^{n-1}}= u_{x^{n - 2}} + f_2 u_{x^n}$, yielding  
$$
u_{x^{n - 2}} = M_2 u_{x^n}\quad \mbox{with } M_2 = LM_1 - f_2{\cdot}\operatorname{Id}
$$ 
and so on. Finally we come to the following system of PDEs:
\begin{equation}
\label{eq:bols3.4}
u_{x^{n-k}} = M_k u_{x^n}, \quad \mbox{where } 
\begin{array}{l}
     M_1 = L - f_1 {\cdot} \operatorname{Id}, \\
    M_k = L M_{k - 1} - f_{k} {\cdot} \operatorname{Id}, \quad 2 \leq k \leq n,
\end{array}
\end{equation}
where $f_1,\dots, f_n$ are the coefficients of the characteristic polynomial of $L$.  Equivalently,
\begin{equation}
\label{eq:bols3.5}
M_k = L^k - f_1 L^{k-1} - f_2 L^{k-2} - \dots - f_{k-1} L - f_k \operatorname{Id},  \quad k=1,\dots, n-1.
\end{equation}    
This system is equivalent to \eqref{eq:bols3.3}, cf. Lemma \ref{lem:bols2.1}.

Thus, we see that reducing $L$ to the both first and second companion forms amounts to solving a quasilinear system of PDEs of the form 
\begin{equation}\label{main_eq}
    u_{x^{n - k}} = A_k(u) u_{x^n}, \quad 1 \leq k \leq n - 1,
\end{equation}
where for the first companion form we set 
$A_k = L^k$,  
while for the second companion form, $A_k = M_k$  with  $M_k$ given by \eqref{eq:bols3.4} or \eqref{eq:bols3.5}.

Notice that \eqref{main_eq} is overdetermined and, in general, not necessarily consistent. However the conditions under which local solutions exist for all initial  data (in other words, the system is in involution) are well-known.

\begin{Proposition}
\label{prop:3.1}
The following properties of \eqref{main_eq} are equivalent
\begin{itemize}
\item[{\rm (A)}] For any real analytic initial condition
$$
\begin{array}{c}
u^1(0, \dots, 0, x^n) = h^1(x^n),\\
u^2(0, \dots, 0, x^n) = h^2(x^n),\\
\dots \\
u^n(0, \dots, 0, x^n) = h^n(x^n), 
\end{array}  \quad \mbox{or shortly}\quad 
u(0, \dots, 0, x^n) = h(x^n),
$$
where $h$ is a real analytic vector-function of one variable, there exists a unique real analytic solution $u=u(x)$ of system \eqref{main_eq}.

\item[{\rm (B)}] Operators $A_k$'s pairwise commute {\rm(}i.e., $A_k A_j = A_jA_k${\rm)}  and  
 \begin{equation}
 \label{eq:propB}
\langle A_k, A_j\rangle (\xi,\xi) \overset{\mathrm{def}}{=}  [A_k\xi, A_j\xi]  - A_j[A_k\xi,\xi] - A_k[\xi,A_j\xi] = 0
\end{equation}
for any vector field $\xi$ and $k,j=1,\dots,n-1$.
\end{itemize}
\end{Proposition}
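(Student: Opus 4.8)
The plan is to prove the equivalence in both directions, treating Proposition~\ref{prop:3.1} as a statement about the involutivity of the overdetermined quasilinear system \eqref{main_eq}. Throughout I abbreviate $\xi = u_{x^n}$, the one first-order jet that the system leaves \emph{undetermined}: all the remaining first derivatives are prescribed by $u_{x^{n-k}} = A_k(u)\,\xi$. For a matrix-valued function $A(u)$ and a vector $v$ I write $A'(v) = \sum_m v^m \partial A/\partial u^m$ for its directional derivative, so that along any map $u(x)$ one has $\partial_{x^i}\bigl(A(u)\bigr) = A'(u_{x^i})$. The whole argument rests on computing the mixed second derivatives $\partial_{x^{n-j}}\partial_{x^{n-k}} u$ in the two orders and reading off the obstruction to their coincidence.

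For (A) $\Rightarrow$ (B) I would differentiate the relations $u_{x^{n-k}} = A_k\xi$ and impose $\partial_{x^{n-j}} u_{x^{n-k}} = \partial_{x^{n-k}} u_{x^{n-j}}$, which every $C^2$ solution satisfies. Substituting the equations themselves (and using $\partial_{x^{n-j}}\xi = \partial_{x^n} u_{x^{n-j}} = \partial_{x^n}(A_j\xi)$) turns this into
\[
\bigl(A_kA_j - A_jA_k\bigr)u_{x^nx^n} + \Bigl(A_k'(A_j\xi) - A_j'(A_k\xi) + A_k\,A_j'(\xi) - A_j\,A_k'(\xi)\Bigr)\xi = 0 ,
\]
an identity valid for every solution. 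A direct expansion of the Lie brackets shows that the second summand equals $-\langle A_k,A_j\rangle(\xi,\xi)$ once $A_k$ and $A_j$ commute; the non-tensorial $\xi$-derivative terms hidden in $\langle A_k,A_j\rangle$ cancel exactly against the commutator $A_jA_k-A_kA_j$, which is also what makes $\langle A_k,A_j\rangle$ tensorial. Now (A) lets me prescribe $u(0)$, $u_{x^n}(0)=\xi$ and $u_{x^nx^n}(0)$ independently through a suitable Cauchy datum $h$: arbitrariness of $u_{x^nx^n}(0)$ forces $A_kA_j = A_jA_k$, and arbitrariness of $\xi$ and of the base point then forces $\langle A_k,A_j\rangle(\xi,\xi)=0$. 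The mixed partials involving the direction $x^n$ produce no further conditions, being automatically consistent. This is (B).

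For (B) $\Rightarrow$ (A) I would construct the solution by an iterated Cauchy–Kovalevskaya argument, switching on one variable at a time. Starting from the datum $u(0,\dots,0,x^n)=h(x^n)$ on the $x^n$-axis, at the $m$-th step I regard $u_{x^{m+1}} = A_{n-m-1}(u)\,u_{x^n}$ as an evolution equation with ``time'' $x^{m+1}$ (it is in Cauchy–Kovalevskaya form, being analytic and linear in the spatial derivative $u_{x^n}$) and solve it analytically, taking the function produced at the previous step as initial value. After $n-1$ steps $u$ is defined on a full neighbourhood of $0$ and satisfies, by construction, each equation on the slice where it was imposed.

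The main obstacle is to show that equations imposed at earlier steps \emph{persist} after the later variables are switched on, and this is exactly where both parts of (B) are used. For $l\le m$ I set $E_l = u_{x^l} - A_{n-l}(u)\,u_{x^n}$, which vanishes on $\{x^{m+1}=0\}$, and compute its derivative in the new time. Using the just-enforced equation $u_{x^{m+1}}=A_{n-m-1}\xi$, the commutativity of the mixed partials of $u$, and the two identities in (B), all inhomogeneous terms cancel and I obtain the closed linear homogeneous system
\[
\partial_{x^{m+1}} E_l = A_{n-m-1}'(E_l)\,\xi + A_{n-m-1}\,\partial_{x^n} E_l , \qquad l=1,\dots,m,
\]
with zero Cauchy data. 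Uniqueness in the analytic Cauchy–Kovalevskaya theorem then gives $E_l\equiv 0$, so the earlier equations survive. This closes the induction, and the stepwise uniqueness yields the uniqueness of $u$ asserted in (A). The only structural assumption used is real analyticity, in line with the standing convention of the paper.
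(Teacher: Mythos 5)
Your proof is correct, and it supplies substantially more than the paper does. The paper's own proof is essentially citation-based: it states the compatibility condition $D_{x^{n-i}}\bigl(A_j u_{x^n}\bigr) = D_{x^{n-j}}\bigl(A_i u_{x^n}\bigr)$, refers to \cite{Nijenhuis3} and the Cartan--K\"ahler theorem \cite{Gold} for the existence theory, and to \cite{shar}, \cite{mag2} for the well-known calculation showing this condition is equivalent to (B). You start from the same compatibility condition but execute everything: in the direction (A) $\Rightarrow$ (B) you derive the explicit obstruction
$\bigl(A_kA_j - A_jA_k\bigr)u_{x^nx^n} + \bigl(A_k'(A_j\xi) - A_j'(A_k\xi) + A_kA_j'(\xi) - A_jA_k'(\xi)\bigr)\xi$,
and the decoupling of its two halves via the independent freedom in $h''(0)$ (killing the commutator) and in $h(0), h'(0)$ (killing the bracket) makes transparent why (B) consists of exactly two conditions; your observation that the non-tensorial terms in $\langle A_k,A_j\rangle$ cancel against the commutator is the correct and essential point there. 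In the direction (B) $\Rightarrow$ (A) you replace the appeal to Cartan--K\"ahler by an iterated Cauchy--Kovalevskaya construction with a persistence lemma: the defects $E_l$ satisfy a linear homogeneous analytic system $\partial_{x^{m+1}} E_l = A'_{n-m-1}(E_l)\,\xi + A_{n-m-1}\,\partial_{x^n}E_l$ with zero Cauchy data, hence vanish identically. This is the standard Frobenius-type bootstrapping and it is carried out correctly (the cancellation of the inhomogeneous terms uses precisely both halves of (B), by the same algebraic identity as in the first direction). What the paper's route buys is brevity and placement of the statement within the general theory of quasilinear systems in involution; what yours buys is a self-contained, checkable proof that uses nothing beyond the analytic Cauchy--Kovalevskaya theorem. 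The only blemishes are cosmetic: the indexing of which variable is switched on at the $m$-th step is slightly tangled, and you should state explicitly that each step is CK-well-posed because the right-hand side involves derivatives only in the already-active direction $x^n$ -- but both points are clear from your construction.
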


\begin{proof} The existence of solutions of \eqref{main_eq} for all initial conditions in a more general case is discussed in \cite{Nijenhuis3} (and, in fact, can be derived from the Cartan-K\"ahler theorem \cite{Gold}). The necessary and sufficient condition is $D_{x^{n - i}} \bigl(A_j u_{x^n}\bigr) = D_{x^{n - j}} \bigl(A_i u_{x^n}\bigr)$ on $U(\mathsf{p})$, where $D_{x^k}$ stands for the derivative in virtue of \eqref{main_eq}.  For quasilinear systems this calculation is well-known (see \cite{shar}, \cite{mag2}) and leads to $\rm (B)$. 
\end{proof}

We now apply this Proposition in our special case.

\begin{Proposition} 
\label{prop:3.2}
Both systems \eqref{eq:bols3.2} and \eqref{eq:bols3.4} satisfy Property {\rm (B)}, and therefore, Property {\rm (A)} from Proposition \ref{prop:3.1}.  
\end{Proposition}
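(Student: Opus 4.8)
The plan is to verify Property (B) separately for the two families $A_k=L^k$ (first system \eqref{eq:bols3.2}) and $A_k=M_k$ (second system \eqref{eq:bols3.4}), after which Property (A), i.e. local solvability for arbitrary analytic initial data, follows at once from Proposition \ref{prop:3.1}. Property (B) has two ingredients: the pointwise commutativity $A_kA_j=A_jA_k$ and the vanishing of the concomitant $\langle A_k,A_j\rangle$ from \eqref{eq:propB}. The commutativity is immediate in both cases, since every $A_k$ is a polynomial in $L$ --- with constant coefficients in the first family and with the coefficient functions $f_i$ in the second, see \eqref{eq:bols3.5} --- and any two polynomials in the same operator commute. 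The entire content of the proof is therefore the vanishing of the concomitant, which I would deduce from the single hypothesis $\mathcal N_L=0$.

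For the first family the argument is purely algebraic. The basic tool is the rearrangement of the definition of the torsion, $[L\xi, L\eta]=\mathcal N_L(\xi,\eta)-L^2[\xi,\eta]+L[L\xi,\eta]+L[\xi,L\eta]$, which for a Nijenhuis operator becomes the commutation rule $[L\xi, L\eta]=L[L\xi,\eta]+L[\xi,L\eta]-L^2[\xi,\eta]$ that lets one pull factors of $L$ through a Lie bracket. Applying this rule repeatedly, I would prove by induction on $k$ and $j$ that $\langle L^k,L^j\rangle(\xi,\xi)$ reduces to a linear combination of terms $L^a\mathcal N_L(L^b\xi,L^c\xi)$, so that it vanishes identically once $\mathcal N_L=0$; the inductive step is exactly the bookkeeping produced by the commutation rule applied to $[L^k\xi,L^j\xi]$ and to the two correction terms $L^j[L^k\xi,\xi]$ and $L^k[\xi,L^j\xi]$.

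For the second family the operators $M_k=L^k-f_1L^{k-1}-\dots-f_k\operatorname{Id}$ carry the variable coefficients $f_i$, so expanding $\langle M_k,M_j\rangle$ generates, besides the algebraic terms already treated above, extra terms in which a Lie bracket differentiates one of the $f_i$ (via $[g\zeta,\chi]=g[\zeta,\chi]-(\chi g)\zeta$); these organise into expressions involving the differentials $\ddd f_i$. The key point is that for a Nijenhuis operator these differentials are not free: they satisfy the invariant relations $L^*\ddd f_k=\ddd f_{k+1}+f_k\,\ddd f_1$ for $k=1,\dots,n-1$ together with $L^*\ddd f_n=f_n\,\ddd f_1$, which are precisely the coordinate-free form of \eqref{first_set} and follow from $\mathcal N_L=0$. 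I would show that, modulo the algebraic terms that already cancel, the coefficient-derivative terms assemble into combinations annihilated exactly by these relations. The main obstacle I anticipate lies here: one must both justify the differential relations on $\ddd f_i$ as consequences of the Nijenhuis condition in arbitrary coordinates $u$, and carry out the (somewhat delicate) matching of the $\ddd f_i$-terms against them. The first family, by contrast, is a self-contained algebraic consequence of $\mathcal N_L=0$ and should be routine once the commutation rule is iterated. With (B) established for both systems, Proposition \ref{prop:3.1} yields (A), completing the proof.
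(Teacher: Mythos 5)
Your proposal is correct in substance but follows a genuinely different route from the paper. The paper does not perform the tensor computation at all: it invokes the theory of bidifferential ideals of Magri and Lorenzoni, defining recursively $A_0 = \Id$, $A_k = A_{k-1}L - a_k\Id$ for any chain of functions satisfying $\ddd a_{k+1} = L^*\ddd a_k - a_k\,\ddd a_1$ (relation \eqref{eq:12}), and citing \cite[Proposition 2]{mag2} for the fact that such operators generate commuting flows, i.e.\ satisfy (B); the first system \eqref{eq:bols3.2} is then the special case $a_k\equiv 0$, and the second system \eqref{eq:bols3.4} is the case $a_k = f_k$, whose chain relations are exactly \cite[Proposition 2.2]{Nijenhuis1}. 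Your plan instead verifies (B) directly: iterating the commutation rule coming from $\mathcal N_L = 0$ for the family $L^k$, and combining it with the differential relations $\ddd f_{k+1} = L^*\ddd f_k - f_k\,\ddd f_1$ for the family $M_k$. Note that these differential relations are precisely the hypotheses the paper checks in order to apply its citation, and they hold in arbitrary coordinates as a consequence of $\mathcal N_L=0$ by \cite[Proposition 2.2]{Nijenhuis1} (not merely as the companion-coordinate system \eqref{first_set}), which settles the justification issue you flag; so in effect your second-family computation amounts to reproving \cite[Proposition 2]{mag2} in the special case $a_k=f_k$. The paper explicitly acknowledges your route as viable, calling the direct verification ``a nice exercise in tensor calculus,'' and its own proof of the converse statement, Proposition \ref{pro1}, carries out manipulations of exactly the kind you sketch (the identity \eqref{k7}, expansions of $\langle M_i, M_j\rangle$ using $LM_i = M_{i+1}+f_{i+1}\Id$), so the delicate matching of $\ddd f_i$-terms that you leave as a sketch is certainly feasible. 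What each approach buys: yours is self-contained and makes visible where the Nijenhuis condition enters; the paper's is shorter, treats both families uniformly as instances of a single recursion, and delegates the heavy computation to the literature.
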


\begin{proof}
Although verification of (B) for operators $A_k=L^k$ and $A_k=M_k$ is a nice exercise in tensor calculus, we prefer to make use of an elegant theory of bidifferential ideals introduced by F.\,Magri in \cite{mag1} and then developed by F.\,Magri and P.\,Lorenzoni in \cite{mag2}, in particular, to construct hierarchies of commuting flows of hydrodynamic type. 
They are defined recursively by setting (cf. \eqref{eq:bols3.4})
$$
A_0 = \Id,    \quad  A_{k} = A_{k-1} L - a_k \Id, \  k =1,2,\dots
$$  
for any chain of functions $a_1, a_2, \dots$ satisfying relations  
\begin{equation}
\label{eq:12}
\ddd a_{k+1} = L^* \ddd a_k  -  a_k\ddd a_1.
\end{equation}
Under these conditions,  the operators  $A_k$ generate commuting flows \cite[Proposition 2]{mag2}, i.e. satisfy (B). 

Our situation is just a particular case of this construction.  Indeed, setting  $a_k = 0$,  we obtain the sequence of operators $A_k=L^k$. Hence Property (B) holds for \eqref{eq:bols3.2}.  Of course, this fact is easy to check independently.   

In the case of system \eqref{eq:bols3.4} we only need to check that the coefficients $f_k$ of the characteristic polynomial of $L$ satisfy \eqref{eq:12}  (we may formally set $f_k = 0$ for $k>n$), but these are exactly relations from \cite[Propostion 2.2]{Nijenhuis1}.  Hence Property (B) holds for \eqref{eq:bols3.4}. It is worth noticing that  \eqref{eq:bols3.4} can also be understood as an $\varepsilon$-system in the sense of M.\,Pavlov   \cite{Pavlov} for $\varepsilon=-1$.
\end{proof}

We have just shown that PDE systems \eqref{eq:bols3.2} and \eqref{eq:bols3.4} are both in involution and their (local) solutions $u(x)$ are parametrised by $n$ functions of one variable  (initial conditions $h^1(x^n),\dots, h^n(x^n))$.  To make sure that such a solution $u(x)$ defines a desired coordinate transformation, we need to check  that the Jacobi matrix $\left( \frac{\partial u}{\partial x}\right)$ is non-degenerate at least at the initial point.  Almost all solutions satisfy this property due to $\gl$-regularity of $L$ (moreover this condition is necessary).  

Indeed, for system \eqref{eq:bols3.2}, choose  the initial condition $u(0,\dots,0,x_n)=h(x_n)$ in such a way that the vector $\xi = u_{x^n}(0) = h_{x^n}(0)$  is such that $L^{n-1}\xi, \dots,  L\xi, \xi$  are linearly independent. Since $L$ is $\gl$-regular,  almost all vectors $\xi$ satisfy this condition.  Due to \eqref{eq:bols3.2}, they form the columns of the Jacobi matrix $\left( \frac{\partial u}{\partial x}\right)$ at the initial point $x=(0,\dots,0,0)$.   Hence, at this point $\det\left( \frac{\partial u}{\partial x}\right) \ne 0$ as required.

The same conclusion for solutions of system \eqref{eq:bols3.4} immediately follows from the fact that $\mathrm{Span}(M_{n-1}\xi, \dots, M_1\xi,\xi)= \mathrm{Span}(L^{n-1}\xi, \dots, L\xi,\xi)$.  This completes the proof of Theorem \ref{main:1}.

We see from this proof that reducibility of $L$ to companion forms \eqref{first} and \eqref{second} follows from the involutivity (Property (B) from Proposition \ref{prop:3.1}) of  PDE systems \eqref{eq:bols3.2} and \eqref{eq:bols3.4} respectively.  This property, in turn, follows from the fact that $L$ is Nijenhuis.  It is natural to ask if the latter condition is also necessary for \eqref{eq:bols3.2} and \eqref{eq:bols3.4} to be in involution. The answer is positive under the additional assumption that $L$ is $\gl$-regular.

\begin{Proposition}\label{pro1}   Let $n=\dim \mathsf M >2$ and $L$ be $\gl$-regular. 
\begin{enumerate}
\item If $\langle L^i, L^j \rangle = 0$ for $1 \leq i < j \leq n-1$, i.e., \eqref{eq:bols3.2} is in involution, then $L$ is a Nijenhuis operator.  
\item If $\langle M_i, M_j \rangle = 0$ for $1 \leq i < j \leq n-1$, where $M_i$ is defined as in \eqref{eq:bols3.4},  i.e., \eqref{eq:bols3.4} is in involution, then $L$ is a Nijenhuis operator.
\end{enumerate}
\end{Proposition}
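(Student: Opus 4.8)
The plan is to reduce both parts to one principle: for a $\gl$-regular operator, vanishing of the ``diagonal torsion'' $\mathcal{N}_L(\xi,L\xi)$ for every $\xi$ already forces $\mathcal{N}_L\equiv 0$. First I would record that each $\langle L^i,L^j\rangle$ and each $\langle M_i,M_j\rangle$ is genuinely tensorial in $\xi$: in the expression $[A\xi,B\xi]-B[A\xi,\xi]-A[\xi,B\xi]$ the only term that fails to be tensorial under $\xi\mapsto g\xi$ is proportional to $(BA-AB)\xi$, and this vanishes because powers of $L$ (respectively the polynomials $M_k$ in $L$) commute. Hence all these quantities may be evaluated pointwise on tangent vectors.

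Next I would prove the key lemma: \emph{if $L$ is $\gl$-regular and $\mathcal{N}_L(\xi,L\xi)=0$ for all $\xi$, then $\mathcal{N}_L=0$.} Polarising gives $\mathcal{N}_L(\xi,L\eta)+\mathcal{N}_L(\eta,L\xi)=0$. Pick a cyclic vector $\xi_0$ (it exists precisely because $L$ is $\gl$-regular) and set $c_{ab}=\mathcal{N}_L(L^a\xi_0,L^b\xi_0)$. Combining the polarised identity with the antisymmetry $c_{ab}=-c_{ba}$ yields $c_{a,b+1}=c_{a+1,b}$, so $c_{ab}$ is constant along each anti-diagonal $a+b=\const$; antisymmetry then forces every $c_{ab}$ to vanish. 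Since $\{L^a\xi_0\}_{a=0}^{n-1}$ is a basis, $\mathcal{N}_L=0$.

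Part (1) now follows at once from the identity
\[
\langle L,L^2\rangle(\xi,\xi)=\mathcal{N}_L(\xi,L\xi),
\]
obtained by expanding both sides (the $[L\xi,\xi]$ and $[\xi,L^2\xi]$ terms match term by term). Since $n>2$, the pair $(i,j)=(1,2)$ is admissible, so the hypothesis gives $\mathcal{N}_L(\xi,L\xi)=0$ and the lemma finishes; the remaining hypotheses $\langle L^i,L^j\rangle=0$ are not even needed.

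Part (2) follows the same scheme with a correction. A longer but direct computation with $M_1=L-f_1\Id$ and $M_2=L^2-f_1L-f_2\Id$ gives
\[
\langle M_1,M_2\rangle(\xi,\xi)=\mathcal{N}_L(\xi,L\xi)+\theta(\xi)\,L\xi-(L^*\theta)(\xi)\,\xi,\qquad \theta=\ddd f_2+f_1\,\ddd f_1-L^*\ddd f_1,
\]
where the bracket terms reproduce $\langle L,L^2\rangle(\xi,\xi)$ exactly and all derivative terms assemble into the $\theta$-contributions. Thus the hypothesis reads $\mathcal{N}_L(\xi,L\xi)=(L^*\theta)(\xi)\,\xi-\theta(\xi)\,L\xi$, and the main obstacle is to show that the correction $1$-form $\theta$ does not spoil the conclusion. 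Here I would use the a priori identity, valid for \emph{every} operator, that $\theta$ is a contraction of the torsion: a coordinate computation gives $\theta(\xi)=-\operatorname{tr}\bigl(\eta\mapsto\mathcal{N}_L(\xi,\eta)\bigr)$. This turns the displayed equation into a single homogeneous linear relation in $\mathcal{N}_L$; expanding it in the cyclic basis $\{L^a\xi_0\}$ exactly as in the lemma, but now carrying the extra rank-two terms produced by $\theta$, I expect to obtain simultaneously $\theta=0$ and $\mathcal{N}_L(\xi,L\xi)=0$, after which the lemma applies. The careful bookkeeping of these correction terms in the cyclic basis is the only genuinely delicate step of the argument.
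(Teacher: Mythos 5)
Your Part 1 is correct, complete, and genuinely different from the paper's argument --- in fact sharper. The identity $\langle L,L^2\rangle(\xi,\xi)=\mathcal N_L(\xi,L\xi)$ checks out, and your lemma is sound: polarising gives $\mathcal N_L(\xi,L\eta)+\mathcal N_L(\eta,L\xi)=0$, so in a cyclic basis $e_a=L^a\xi_0$ the vectors $c_{ab}=\mathcal N_L(e_a,e_b)$ satisfy $c_{a,b+1}=c_{a+1,b}$, are constant along anti-diagonals, and are then killed by antisymmetry (all moves stay inside the range $0\le a,b\le n-1$, so Cayley--Hamilton is never needed). The paper instead uses the whole family $\langle L^i,L^j\rangle=0$ through the identity \eqref{bols1} together with the substitution $\xi\mapsto\xi+L\xi$ to reach the pairs involving $L^{n-1}\xi$; your version shows that the single condition $\langle L,L^2\rangle=0$ already suffices, and your anti-diagonal bookkeeping settles the pair $(L^{n-2}\xi,L^{n-1}\xi)$ more cleanly than the paper's substitution trick does.

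For Part 2, both identities you assert are correct: I verified that $\langle M_1,M_2\rangle(\xi,\xi)=\mathcal N_L(\xi,L\xi)+\theta(\xi)\,L\xi-(L^*\theta)(\xi)\,\xi$ with $\theta=\ddd f_2+f_1\,\ddd f_1-L^*\ddd f_1$, and that for an arbitrary operator $\theta(\xi)=-\operatorname{tr}\bigl(\eta\mapsto\mathcal N_L(\xi,\eta)\bigr)$. But the proof stops at ``I expect to obtain $\theta=0$'', and that deferred step is the actual content. It does close, and with no bookkeeping in the cyclic basis at all: set $\mathcal N'(\xi,\eta)=\mathcal N_L(\xi,\eta)-\theta(\eta)\xi+\theta(\xi)\eta$. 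This is an antisymmetric tensor, and your displayed hypothesis $\mathcal N_L(\xi,L\xi)=(L^*\theta)(\xi)\,\xi-\theta(\xi)\,L\xi$ says precisely that $\mathcal N'(\xi,L\xi)=0$ for all $\xi$. Your Part 1 lemma uses only antisymmetry, tensoriality and $\gl$-regularity --- not that the form is a torsion --- so it applies to $\mathcal N'$ and gives $\mathcal N'\equiv 0$, i.e. $\mathcal N_L(\xi,\eta)=\theta(\eta)\xi-\theta(\xi)\eta$. Now contract: the trace of $\eta\mapsto\theta(\eta)\xi-\theta(\xi)\eta$ is $(1-n)\,\theta(\xi)$, so the trace identity forces $\theta(\xi)=(n-1)\,\theta(\xi)$, i.e. $(n-2)\,\theta=0$; since $n>2$ we get $\theta=0$ and hence $\mathcal N_L=0$. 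Note that this is exactly where the hypothesis $n>2$ enters your argument.

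Completed this way, your route differs substantially from the paper's proof of the second part, which uses \emph{all} the brackets $\langle M_i,M_j\rangle=0$, first to establish the identities $\ddd f_{j+1}(M_i\xi)=\ddd f_{i+1}(M_j\xi)$ of Lemma \ref{lm1} and then in the computation \eqref{bols3} of $\mathcal N_L(M_i\xi,M_j\xi)$. Your argument needs only $\langle M_1,M_2\rangle=0$ (and only $\langle L,L^2\rangle=0$ in Part 1), so it proves a stronger statement: involutivity of the first pair of flows alone already forces $L$ to be Nijenhuis.
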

\begin{proof} 1. It is easily seen that for any three commuting operators $L$, $A$ and $B$ the following (algebraic) identity holds:
\begin{equation}\label{bols1}
      \mathcal N_L (A\xi, B\xi)= \bigl(\langle LA, LB \rangle  - L \langle LA, B \rangle  - L \langle A, LB \rangle  + L^2 \langle A, B\rangle \bigr)(\xi, \xi) 
   \end{equation}
Hence, for $A= L^i$, $B=L^j$,  $0\le i,j  <n-1$, we have 
$$
  \mathcal N_L (L^i\xi, L^j\xi)= \bigl(\langle L^{i+1}, L^{j+1} \rangle  - L \langle L^{i+1}, L^j \rangle  - L \langle L^{i}, L^{j+1} \rangle  + L^2 \langle L^{i}, L^j\rangle \bigr)(\xi, \xi) = 0
$$
for any $\xi$. Replacing $\xi$ with $\eta = \xi + L\xi$ and setting $j=n-2$ in this formula, we get
$$
\begin{aligned}
0 &= \mathcal N_L (L^i (\xi + L\xi), L^{n-2}(\xi + L\xi))=  \mathcal N_L (L^i \xi , L^{n-2}\xi)
+\mathcal N_L (L^i (L\xi) , L^{n-2}(L\xi))     \\
&+\mathcal N_L (L^{i+1} \xi, L^{n-2}\xi)+
\mathcal N_L (L^i \xi, L^{n-1}\xi) = 0 + 0 + 0 + \mathcal N_L (L^i \xi, L^{n-1}\xi).
  \end{aligned}
  $$
Thus, $\mathcal N_L$ vanishes for any pair of vectors from the set $\xi, L\xi, \dots, L^{n - 1}\xi$.
As $L$ is $\gl$-regular, one can choose $\xi$ in a way that $\xi, L\xi, \dots, L^{n - 1}\xi$ form a basis in the tangent space. Hence,
$\mathcal N_L = 0$, as stated.

2. In what follows we assume that $M_0=\Id$ and $M_n = 0$ which perfectly agrees with the above definition of $M_i$'s (due to the Cayley-Hamilton theorem). We start with
\begin{Lemma}\label{lm1}
If  $\langle M_i, M_j \rangle = 0$ for $1 \leq i < j \leq n-1$, then the following identities hold
\begin{equation}\label{pro2_1}
   \ddd f_{j+1} (M_i\xi) - \ddd f_{i + 1} (M_j\xi) =0, \quad i,j = 0, \dots, n - 1.
\end{equation}
where $f_i$ are coefficients of the characteristic polynomial of $L$ and $\xi$ is an arbitrary tangent vector.
\end{Lemma}

\begin{proof} In formula \eqref{eq:propB}, the expression $\langle A, B\rangle$ is treated as a (vector-valued) quadratic form on the tangent bundle (one assumes that $A$ and $B$ commute).  We can also naturally interpret it as a symmetric bilinear form by setting:
$$
\langle A, B\rangle (\xi,\eta) = \frac{1}{2}\bigl([A\xi, B\eta] - A[\xi, B\eta] - B[A\xi,\eta] + [A\eta, B\xi] - A[\eta, B\xi] - B[A\eta,\xi]\bigr)
$$
Obviously  $\langle A, B\rangle(\xi,\xi)\equiv 0$ implies $\langle A, B\rangle(\xi,\eta)\equiv 0$.

First we observe the following (purely algebraic) identity:
$$
    \langle M_i L, M_j \rangle (\xi, \xi) + \langle M_i, M_jL \rangle (\xi, \xi)  = M_i\langle L, M_j \rangle (\xi, \xi) - M_j \langle L, M_i \rangle (\xi, \xi) - 2\langle M_j, M_i \rangle (L\xi, \xi).
$$
In our case we have  $L=M_1 +f_1\Id$ and, in addition,  $\langle M_i, M_j \rangle \equiv 0$, which gives:
\begin{equation}\label{bols4}
\begin{aligned}
    \langle M_i L, M_j \rangle  + \langle M_i, M_jL \rangle   &= M_i\langle L, M_j \rangle  - M_j \langle L, M_i \rangle = \\
    M_i\langle M_1 + f_1\Id, M_j \rangle  &- M_j \langle M_1+f_1\Id, M_i \rangle = M_i\langle f_1\Id, M_j \rangle  - M_j \langle f_1\Id, M_i \rangle 
 \end{aligned}
    \end{equation}

Using \eqref{bols4} and the definition of $M_i$'s, we now compute the right hand side of the identity $0=\langle M_{i + 1}, M_j \rangle  + \langle M_{i}, M_{j + 1} \rangle$: 
\begin{equation}\label{k1}
    \begin{aligned}
        0 & = \langle M_{i + 1}, M_j \rangle  + \langle M_{i}, M_{j + 1} \rangle  =  \langle LM_{i}- f_{i+1}\Id, M_j \rangle  + \langle M_{i}, L M_j -f_{j+1}\Id\rangle\\
        & = \langle LM_{i}, M_j \rangle  + \langle M_{i}, L M_j \rangle  - \langle f_{i + 1} \operatorname{Id}, M_j \rangle  - \langle M_{i}, f_{j + 1} \operatorname{Id} \rangle  = \\ 
       & = M_i \langle f_1 \operatorname{Id}, M_j \rangle - M_j \langle f_1 \operatorname{Id}, M_i \rangle - \langle f_{i + 1} \operatorname{Id}, M_j \rangle  + \langle f_{j + 1} \operatorname{Id}, M_i \rangle .\\
    \end{aligned}
\end{equation}
Notice that 
\begin{equation}\label{k7}
    \langle f \operatorname{Id}, A \rangle (\xi, \xi) = [f\xi, A\xi] - f[\xi, A\xi] - A[f\xi, \xi] = \ddd f(A\xi) \xi - \ddd f(\xi) A\xi
\end{equation}
for an arbitrary function $f$ and operator $A$. Applying this relation to \eqref{k1} gives
\begin{equation}\label{k2}
    \begin{aligned}
        0 & = M_i \langle f_1 \operatorname{Id}, M_j \rangle (\xi,\xi)- M_j \langle f_1 \operatorname{Id}, M_i \rangle(\xi,\xi) - \langle f_{i + 1} \operatorname{Id}, M_j \rangle (\xi, \xi) + \langle f_{j + 1} \operatorname{Id}, M_i \rangle (\xi, \xi) = \\
        & = \big( \ddd f_1(M_j\xi) - \ddd f_{j + 1}(\xi)\big) M_i\xi - \big( \ddd f_1(M_i\xi) - \ddd f_{i + 1}(\xi)\big) M_j\xi + \\
        & + \big( \ddd f_{j + 1} (M_i \xi) - \ddd f_{i + 1} (M_j \xi)\big)\xi.
    \end{aligned}
\end{equation}
Recall that $L$ is $\gl$-regular. Hence  $\xi, L\xi, \dots, L^{n - 1}\xi$ are linearly independent for almost all tangent vectors $\xi$. By formula \eqref{eq:bols3.4}    for $M_i$, this is still true for $\xi, M_1\xi, \dots, M_{n - 1}\xi$. Therefore  $\xi, M_i\xi, M_j\xi$ are linearly independent in \eqref{k2}, and the coefficients of this linear combination vanish. Thus,  $\ddd f_{j+1} (M_i\xi) - \ddd f_{i + 1} (M_j\xi) =0$ for {\it almost all} vectors $\xi$ and by continuity for {\it all} vectors. Lemma is proved.
\end{proof}
 

Similar to the first case, our goal is to show that $\mathcal N_L (M_i\xi, M_j\xi)=0$ for all $i,j=0,\dots,n-1$. Since $M_0\xi, M_1\xi, \dots, M_{n-1}\xi$  form a basis for a generic vector $\xi$,  this will imply $\mathcal N_L = 0$. 

As above we use \eqref{bols1} with $A=M_i$ and $B=M_j$:
\begin{equation}
\label{eq:bols1'}
\mathcal N_L(M_i\xi, M_j\xi)=\bigl(\langle LM_i, LM_j \rangle  - L \langle LM_i, M_j \rangle  - L \langle M_i, LM_j \rangle  + L^2 \langle M_i, M_j\rangle\bigr)(\xi,\xi). 
\end{equation}
Substituting  $LM_i = M_{i+1} + f_{i+1}\Id$ and using the relations $\langle M_i,M_j\rangle = 0$ ($ i,j=0,\dots,n$) and identity $\langle f_{i+1}\Id ,f_{j+1}\Id\rangle = 0$, we can rewrite the vector-valued quadratic form in the right hand side of \eqref{eq:bols1'} as follows:
\begin{equation}\label{bols2}
\begin{aligned}
     & \langle LM_i, LM_j \rangle  - L \langle LM_i, M_j \rangle  - L \langle M_i, LM_j \rangle  + L^2 \langle M_i, M_j\rangle = \\
      &\langle M_{i+1} +f_{i+1}\Id , M_{j+1}+ f_{j+1}\Id\rangle   
      - L  \langle M_{i+1} +f_{i+1}\Id, M_j \rangle  - L \langle M_i, M_{j+1} +f_{j+1}\Id \rangle  
       =\\
      &\langle f_{i+1}\Id , M_{j+1}\rangle   + \langle M_{i+1} ,  f_{j+1}\Id\rangle   
      - L  \langle f_{i+1}\Id, M_j \rangle  - L \langle M_i, f_{j+1}\Id \rangle  = \\
      &\langle f_{i+1}\Id , L M_{j} - f_{j+1}\Id \rangle   + \langle L M_{i} - f_{i+1}\Id ,  f_{j+1}\Id\rangle   
      - L  \langle f_{i+1}\Id, M_j \rangle  - L \langle M_i, f_{j+1} \Id \rangle  = \\
      &\langle f_{i+1}\Id , L M_{j} \rangle   + \langle L M_{i} ,  f_{j+1}\Id\rangle   
      - L  \langle f_{i+1}\Id, M_j \rangle  - L \langle M_i, f_{j+1} \Id \rangle
\end{aligned}
   \end{equation}
Hence, using \eqref{k7}, we get:
\begin{equation} \label{bols3}
\begin{aligned}
 \mathcal N_L (M_i\xi&, M_j\xi)= \bigl( \langle f_{i+1}\Id , LM_{j} \rangle   + \langle LM_{i} ,  f_{j+1}\Id\rangle   
      - L  \langle f_{i+1}\Id, M_j \rangle  - L \langle M_i, f_{j+1} \Id \rangle \bigr) (\xi,\xi) \\
&\,= \dd f_{i+1} (LM_j \xi) \,\xi - \dd f_{i+1}(\xi) \,L M_j \xi  -  \dd f_{j+1} (LM_i \xi)\, \xi - \dd f_{j+1}(\xi)\, L M_i \xi  \\
&\,- L \bigl(\dd f_{i+1} (M_j\xi)\, \xi - \dd f_{i+1}(\xi)\, M_j\xi\bigr) + L \bigl(\dd f_{j+1} (M_i\xi) \,\xi - \dd f_{j+1}(\xi)\, M_i\xi\bigr) \\ 
&\,=   \bigl(\dd f_{i+1} ( M_j (L\xi))-\dd f_{j+1} ( M_{i}(L\xi)) \bigr)\,\xi 
- \bigl( \dd f_{i+1} ( M_{j}\xi) - \dd f_{j+1} ( M_{i}\xi) \bigr) \,  L\xi.
  \end{aligned}
\end{equation}
It remains to notice that  the coefficients in front of $\xi$ and $L\xi$ vanish by Lemma \ref{lm1}, which completes the proof. \end{proof}


\begin{Remark}{\rm The $\gl$-regularity assumption in Proposition \ref{pro1}  is essential.  Indeed,  consider an operator $L$ such that $L^2 = \operatorname{Id}$ or $L^2 = 0$. Then the involutivity conditions  $\langle L^i,  L^j\rangle=0$ and $\langle M_i,  M_j\rangle=0$ obviously hold.  However, $L$ does not need to be Nijenhuis. 
}\end{Remark}


\section{Proof of Theorem \ref{main:2}}\label{sect:proof2}

The goal of this section is to study and solve the PDE system:
 \begin{equation}\label{first_set_again}
    \begin{aligned}
        & \frac {\partial f_i} {\partial x^{j}} = f_i  \frac{\partial f_1}{\partial x^{j+1}} + \frac{\partial f_{i + 1}} {\partial x^{j+1}} ,  \\
        & \frac{\partial f_n}{\partial x^{j}}  = f_n \frac{\partial f_1}{\partial x^{j+1}}.
        \end{aligned}
    \end{equation}
$1 \leq i, j \leq n-1$. According to Theorem \ref{main:1}, every collection of functions $f_i$ satisfying this system defines a  $\gl$-regular Nijenhuis operator  
of the form
\begin{equation}\label{first_again}
L(x) =  \Lfirst (x) =  \left(\begin{array}{ccccc}
     f_1 & 1 & 0 & \dots & 0  \\
     f_2 & 0 & 1 & \ddots & \vdots \\
     \vdots & \vdots & \ddots & \ddots & 0 \\
     f_{n - 1} & 0 & \dots & 0 & 1 \\
     f_n & 0 & \dots &  0 & 0 \\
    \end{array}\right),
    \end{equation}
and vice versa,  if this operator is Nijenhuis, then these functions satisfy  \eqref{first_set_again}. Throughout this section we deal with  a Nijenhuis operator written in first companion form and use $L$ instead of $\Lfirst$ to simplify notation. 

If we denote  $f=(f_1,\dots, f_n)$  (in matrix form, we think of $f$ as a {\it column}-vector), then   \eqref{first_set_again}  simply means that
\begin{equation}
\label{eq:20}
f_{x^j} = L f_{x^{j+1}} , \quad \mbox{or equivalently} \quad    f_{x^j} = L^{n-j} f_{x^n}, \quad  1\le j \le n-1. 
\end{equation}

Observe that \eqref{eq:20} coincides with the PDE system  \eqref{main_eq} with $A_i = L^i$ that we used above to reduce $L$ to the first companion form.  The difference is that now the operator $L$ is already in companion form so \eqref{eq:20}  (equivalently \eqref{first_set_again}) defines transformations $f = f(x)$ that preserve this form. (Notice that we are now interested in both invertible and non-invertible transformations.)  This observation can be rephrased as follows.

System \eqref{first_set_again} (or equivalently, \eqref{eq:20}) can be written in the following matrix form (see Lemma \ref{lem:bols2.1}):
\begin{equation}
\label{eq:21}
\left(\frac{\partial f}{\partial x} \right)  L    =  L  \left(\frac{\partial f}{\partial x}\right) 
\end{equation}

If $f=f(x)$ defines an invertible transformation, i.e.,  the Jacobi matrix $ \left(\frac{\partial f}{\partial x} \right)$ is invertible, then  
\eqref{first_set_again} is equivalent to 
\begin{equation}
\label{eq:23}
\left(\frac{\partial x}{\partial f} \right)  L    =  L  \left(\frac{\partial x}{\partial f}\right).
\end{equation}
which is a linear system of PDEs for unknown functions $x^i=x^i(f)$.   This system can be easily solved in a neighbourhood of any point $f_1=c_1,\dots, f_n=c_n$ for any initial condition and the corresponding solution can be found without integration.  We are grateful to E.Ferapontov for explaining us the idea of this method.

\begin{Proposition}\label{prop:33}
For any real-analytic initial condition 
\begin{equation}
\label{eq:25}
\begin{aligned}
x^1(c_1,\dots,c_{n-1}, c_n + \tau) &= v_1(\tau)\\
& \dots \\
x^n(c_1,\dots,c_{n-1}, c_n + \tau) &= v_n(\tau)\\
\end{aligned}
\end{equation}
where $\tau$ belongs to a neignborhood of zero,
there exists a unique (local) real analytic solution of \eqref{eq:23}. This solution can be found by using the following procedure.
Consider a real analytic function $F(t)$ constructed from $v_1,\dots,v_n$ as follows:
\begin{equation}
\label{eq:25'}
F(t) = v_1 \bigl(p(t)\bigr) + t v_2 \bigl(p(t)\bigr) + \dots + t^{n-1} v_n \bigl(p(t)\bigr), 
\end{equation}
where  $p(t) = t^n - c_1 t^{n-1} - \dots - c_{n-1} t - c_n$.
Then the solution $x(f)$ satisfying the initial conditions \eqref{eq:25} takes the form
\begin{equation}
\label{eq:26}
x(f) = F(L) e_n, \quad  \mbox{where}   \  e_n = \begin{pmatrix}  0 \\ \vdots \\ 0 \\ 1   \end{pmatrix},
\end{equation}
i.e., $x(f)$ is the last column of the matrix $F(L)$. 
\end{Proposition}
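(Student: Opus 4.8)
The plan is to take the explicit formula $x(f) = F(L)e_n$ and verify directly that it is a solution of \eqref{eq:23} realising the Cauchy data \eqref{eq:25}; uniqueness will then come for free. Indeed, applying the matrix identity \eqref{eq:23}, i.e. $\left(\frac{\partial x}{\partial f}\right)L = L\left(\frac{\partial x}{\partial f}\right)$, successively to $e_n, Le_n = e_{n-1}, \dots$ shows that it is exactly the involutive system $x_{f^{n-k}} = L^k x_{f^n}$ of the form \eqref{eq:bols3.2}, now for the $\gl$-regular Nijenhuis operator $L(f)$ of \eqref{eq:nilp_pert} (whose coefficients $f_i = f^i$ trivially satisfy \eqref{first_set}). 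Thus Propositions \ref{prop:3.1} and \ref{prop:3.2} already furnish a unique real-analytic solution for the data \eqref{eq:25}, and it only remains to check that $F(L)e_n$ is that solution.

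To verify the PDE, I would exploit the rank-one dependence of $L$ on $f$: since only the first column of the companion matrix varies, $\ddd L = (\ddd f)\,e_1^{\top}$ with $\ddd f = (\ddd f^1, \dots, \ddd f^n)^{\top}$. Writing $R(z) = (z\Id - L)^{-1}$ and representing $F(L) = \frac{1}{2\pi\ii}\oint F(z)R(z)\,\ddd z$ through the holomorphic functional calculus (legitimate because each $v_k$ is analytic near $0$ and $p$ sends the eigenvalues of $L(c)$ to $0$, so $F$ is analytic near the spectrum of $L$), the Fréchet derivative of $F(L)e_n$ becomes
\begin{equation*}
\ddd x = \frac{1}{2\pi\ii}\oint F(z)\,R(z)\,(\ddd L)\,R(z)\,e_n\,\ddd z = \frac{1}{2\pi\ii}\oint F(z)\,\bigl(e_1^{\top}R(z)e_n\bigr)\,R(z)\,\ddd f\,\ddd z,
\end{equation*}
where the scalar factor $e_1^{\top}R(z)e_n$ has been pulled out of the rank-one middle term. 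Hence $\frac{\partial x}{\partial f} = \frac{1}{2\pi\ii}\oint F(z)\,\bigl(e_1^{\top}R(z)e_n\bigr)\,R(z)\,\ddd z$ is a scalar-weighted superposition of resolvents of $L$; since every $R(z)$ commutes with $L$, so does the Jacobi matrix, which is precisely \eqref{eq:23}.

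For the initial data I would restrict to the curve $f = (c_1, \dots, c_{n-1}, c_n + \tau)$, where the characteristic polynomial of $L$ equals $p(\lambda) - \tau$, so that Cayley--Hamilton gives $p(L) = \tau\Id$. As $p(L)$ is then scalar, $v_k(p(L)) = v_k(\tau)\Id$, whence
\begin{equation*}
F(L) = \sum_{k=1}^{n} L^{k-1} v_k\bigl(p(L)\bigr) = \sum_{k=1}^{n} v_k(\tau)\,L^{k-1}.
\end{equation*}
Applying this to the cyclic vector $e_n$ and using $L^{k-1}e_n = e_{n-k+1}$ lets one read off the components of $x = F(L)e_n$ and match them against the prescribed values in \eqref{eq:25}.

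The only genuinely delicate point is the differentiation of $F(L(f))$ in the second step: because $L$ and $\ddd L$ do not commute, a naive term-by-term differentiation of a polynomial expression for $F(L)$ produces non-central corrections. The resolvent representation is what organises these corrections, and the rank-one form of $\ddd L$ with fixed right factor $e_1^{\top}$ is what collapses the middle factor $e_1^{\top}R(z)e_n$ to a scalar; I expect establishing these two facts cleanly to be the crux of the argument.
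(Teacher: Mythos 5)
Your proposal is correct, but its core step runs by a genuinely different method than the paper's. To verify that $x(f)=F(L)e_n$ solves \eqref{eq:23}, the paper avoids functional calculus entirely: it notes that \eqref{eq:23} is linear in $x$, that constant vectors are solutions, and that multiplication by $L$ maps solutions to solutions (in companion coordinates $\frac{\partial (Lx)}{\partial f} = L\left(\frac{\partial x}{\partial f}\right) + x^1\cdot\Id$, which commutes with $L$ whenever $\frac{\partial x}{\partial f}$ does); hence $L^k e_n$ is a solution for every $k$, so $F(L)e_n$ solves \eqref{eq:23} for every polynomial $F$, and the general real analytic case follows by density of polynomials. Your resolvent computation replaces this induction-plus-approximation by a single differentiation: $\ddd R = R(\ddd L)R$ together with the rank-one identity $\ddd L = (\ddd f)\,e_1^{\top}$ collapses $R(\ddd L)Re_n$ to $\bigl(e_1^{\top}Re_n\bigr)R\,\ddd f$, so the Jacobi matrix is a scalar-weighted contour integral of resolvents and therefore commutes with $L$. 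Both arguments hinge on exactly the same structural fact, namely that only the first column of the companion matrix varies; your version handles arbitrary analytic $F$ in one stroke at the price of invoking the Dunford--Riesz calculus, while the paper stays at the level of polynomial algebra at the price of a limiting argument. Your treatment of existence and uniqueness (reduce \eqref{eq:23} to the column system $x_{f^{n-k}}=L^k x_{f^n}$ for the Nijenhuis operator \eqref{eq:nilp_pert} and invoke Propositions \ref{prop:3.1} and \ref{prop:3.2}) and of the initial conditions (Cayley--Hamilton gives $p(L)=\tau\cdot\Id$ on the initial line) coincides with the paper's; note only that you derive just one direction of the equivalence with the column system (applying \eqref{eq:23} to $e_n,\dots,e_2$), and the converse is exactly Lemma \ref{lem:bols2.1}, which you should cite. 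A final bookkeeping remark: with $F$ literally as in \eqref{eq:25'}, the read-off $F(L)e_n=\sum_k v_k(\tau)\,e_{n-k+1}$ gives $x^i=v_{n+1-i}(\tau)$, i.e.\ the prescribed values in reversed order; the paper's own proof silently pairs $L^k$ with $v_{n-k}$, so this index reversal is a glitch in the statement itself rather than in your argument, but actually carrying out your final ``match'' step would have exposed it.
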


\begin{proof}
We first notice that the function \eqref{eq:25'} is real analytic in a neighbourhood of the spectrum of $L_0=L(c_1,\dots,c_n)$ \footnote{In a neighbourhood of a pair of complex conjugate eigenvalues  $\lambda, \bar\lambda$,  {\it real} analyticity of $F$ means, in addition,  that in $F(\overline{z}) =\overline{F(z)}$.}  and therefore  locally $F(L)$ is a real analytic matrix function (see \cite{higham}).

To prove formula \eqref{eq:26}, it is sufficient to check two facts:    

\begin{itemize}
\item Formula \eqref{eq:26}   gives a solution of \eqref{eq:23}   for an arbitrary polynomial  $F(t)$  (then this will be true for any real-analytic functions by continuity as polynomials are everywhere dense in this space).
\item The solution defined by \eqref{eq:26} indeed satisfies the required initial conditions.
\end{itemize}

Since the PDE system \eqref{eq:26} is linear,  instead of an arbitrary polynomial $F(t)$ it is sufficient to consider only polynomials of the form $t^k$, $k=0,1,2,\dots$.   We will check this fact by induction, namely, we prove the following 
\begin{Lemma}
Let $x(f) = \begin{pmatrix} x^1(f) \\ \vdots \\ x^n(f) \end{pmatrix}$ be a solution of \eqref{eq:23}, then  $\tilde x(f) = L x(f)$ is a solution also. In other words, multiplication by $L$ sends solutions to solutions. 
\end{Lemma}

\begin{proof}    It is easy to see that the Jacobi matrix $\left(  \frac{\partial \tilde x}{\partial f}  \right) $ takes the form
$$
\left(  \frac{\partial \tilde x}{\partial f}  \right)  =  L  \left(  \frac{\partial x}{\partial f}  \right) + x^1 \cdot \mathrm{Id}
$$
Hence, if $\left(  \frac{\partial x}{\partial f}  \right)$  commutes with $L$, then $\left(  \frac{\partial \tilde x}{\partial f}  \right)$ commutes also, i.e. $\tilde x(f)$ is a solution  of \eqref{eq:23}, as stated.
\end{proof}

It is easy to see that any constant vector-function $x(f) = a\in\R^n$, and in particular $x(f)=e_n$, is a solution of \eqref{eq:23}, then by induction, we have $Le_n$, $L^2e_n$, \dots, $L^k e_n$ are all solutions too, implying that $x(f) = F(L) e_n$ is a solution of  \eqref{eq:23} for any polynomial and hence for any real analytic function $F$. 

To check the  initial conditions, we compute $F(L)e_n$, i.e., the last column of $F(L)$  for  $f_i = c_i$, ($i=1,\dots,n-1$), $f_n=c_n+\tau$.

First we notice that the substitution of $L$ (with the above indicated values of $f_i$) into the polynomial $p(t) = t^n - c_1t^{n-1} -\dots - c_{n-1} t - c_n$ gives
$$
p(L) = \tau \cdot \operatorname{Id}.
$$  
Indeed,  
$$
\chi_{L} (t) =  t^n - f_1t^{n-1} -\dots - f_{n-1} t - f_n =  t^n - c_1t^{n-1} -\dots - c_{n-1} t - (c_n + \tau) = p(t) - \tau.
$$
Hence, $p(t) = \chi_{L} (t) +\tau$ and $p(L) = \chi_{L} (L) +\tau\cdot\Id = 0 + \tau\cdot\Id $, as stated.

Next,  
$$
F(L) = \sum v_{n-k} (p(L)) (L)^k = \sum v_{n-k} (\tau\cdot \operatorname{Id}) L^k =  \sum v_{n-k} (\tau)L^k.
$$
It remains to notice that the last column of $L^k$ is the $(n-k)$-th basis vector $e_{n-k}$ and therefore the last column of $F(L)$ is $\sum v_{n-k} (\tau) e_{n-k} = \begin{pmatrix}  v_1(\tau) \\ \vdots \\ v_n(\tau) \end{pmatrix}$.   \end{proof}

It is worth mentioning that the formulas for solutions of \eqref{eq:23} are based on a more general phenomenon (see \cite{Tsarev}) related to quasilinear systems of PDEs of type \eqref{main_eq} which, in the case of Nijenhus operators, can be explained as follows.   Consider the operator 
$$
M = x_1 L^{n-1}(u) + x_2 L^{n-2}(u) + \dots + x_{n-1} L(u) + x_n\operatorname{Id}
$$ 
and algebraic relation 
\begin{equation}
\label{eq:30}
M = F(L),
\end{equation}
where  $F$ is an analytic matrix function.   If $L$ is a $\gl$-regular Nijenhuis operator in coordinates $u^1,\dots, u^n$, then $x_i$ can be expressed in terms of $u$ and the inverse function $u(x)$ (if it exists!) is a solution of the PDE system
$$
u_{x^{j}} = L^{n-j} u_{x^{n}},   \quad j=,\dots, n-1.
$$

Our formula is just a particular version of this fact for $L$ being in the first companion form. In this case, one can resolve \eqref{eq:30}  (this is by the way another advantage of the first companion form)  explicitly and choose an appropriate matrix function $F$ for a prescribed initial condition.

Proposition \ref{prop:33} describes all the solutions $x=x(f)$ of \eqref{eq:23} and therefore all invertible solutions $f=f(x)$ of \eqref{first_set_again}.  In other words, we obtain description of all Nijenhuis operators in companion form which are differentially non-degenerate  (see \cite[Definition 2.10]{Nijenhuis1}  and Remark \ref{rem:1.4}).  Notice that all these operators can be transformed to each other by a suitable coordinate change and, in particular, each of them can be brought to the form \eqref{eq:nilp_pert}.  Equivalently, we can say that such operators form the {\it largest} (or, {\it generic}) orbit of the groupoid that consists of coordinate transformations acting on Nijenhuis operators of type \eqref{first}. 


However,  our goal is to describe all the solutions $f=f(x)$ of \eqref{first_set_again}, both invertible and non-invertible.  Moreover, we would like to be able to construct the solution that corresponds to prescribed initial conditions $f(0,\dots,0,x^n) = v(x^n)$. The above method does not allow us to do this and we need to modify it.  This is exactly what Theorem \ref{main:2} does by replacing \eqref{eq:30} with a more general algebraic relation of the form $r(L,M)=0$ which, in some sense, interchanges the roles of $L$ and $M$ and, as a result, $x$ and $f$.  

We now prove Theorem \ref{main:2}.  As above,
we set $M = x^1 L^{n-1} + x^2 L^{n-2} + \dots + x^{n-1} L + x^n\operatorname{Id}$ and consider the matrix function
$$
 r(L,M) = L^n  -  v_1(M) L^{n-1} - v_2(M) L^{n-2} - \dots - v_{n-1}(M) L - v_n(M)
 $$ 
 where  $v_i(t)$ are the functions defining the initial conditions for  \eqref{first_set}   (or equivalently, \eqref{eq:20}).  
 
We need to show that the solution $f(x)=(f_1(x),\dots, f_n(x))$ of \eqref{first_set}  with prescribed initial conditions can be obtained by resolving the relation  $r(L,M)=0$ with respect to the coefficients of the characteristic polynomial of $L$.
 
We first notice that this relation is invariant in algebraic sense so that we may consider the matrices $L$ and $M$ in any basis we like. Of course, we will assume that $L$ is written in companion form \eqref{first_again}. 

The matrix $\sum v_i(M) L^{n-i}$ commutes with $L$ and its entries are analytic functions in $x$ and $f$. This matrix can be uniquely presented as linear combination
\begin{equation}\label{r1}
\sum v_i(M) L^{n-i} = g_1 L^{n-1} + \dots + g_{n-1} L + g_{n} \mathrm{Id},   
\end{equation}
where $g_i=g_i(x,f)$ are noting else but the entries of the last column of $\sum v_i(M) L^{n-i}$   (this easily follow from the fact that $L$ is a companion matrix).  Thus, the relation $r(L,M)=0$  reads
$$
L^n - g_1 L^{n-1} - \dots - g_{n-1} L - g_{n} \mathrm{Id} =0.
$$ 
Comparing with 
$$
L^n - f_1 L^{n-1} - \dots - f_{n-1} L - f_{n} \mathrm{Id} =0 \quad\mbox{ (Cayley--Hamilton theorem)}
$$
and using $\gl$-regularity of $L$ we come to the system of algebraic relations
$$
f_i = g_i(x,f).
$$
To resolve these relations w.r.t. $f$, i.e. to find $f_i =f_i(x)$ as a real analytic function of $x$  (for small $x$), it is sufficient to check that  $\frac{\partial g_i}{\partial f_\alpha} (0,\dots,0,  x^n, f)= 0$, which is obviously true as 
$$
\sum v_i(0\cdot L^{n-1}  + \dots + 0\cdot L +  x^n{\cdot} \mathrm{Id}) L^{n-i} = \sum v_i(x^n) L^{n-i},
$$ 
implying that  $g_i(0,\dots,0,  x^n, f)$ coincides with $v_i(x^n)$ and therefore does not depend on $f_\alpha$.   This proves the first statement and also shows that the initial conditions are indeed fulfilled: if $x^1=\dots=x^{n-1}=0$, then  $f_i(0,\dots, 0, x^n)=g_i(0,\dots, 0,x^n,f)=v_i(x^n)$, as required.

The last step is to show that  $f_i(x)$'s so obtained satisfy \eqref{first_set} or equivalently \eqref{eq:20}. We start with two lemmas concerning $g(x, f)$.

\begin{Lemma}\label{lem1}
The vector-function $g = (g_1\, \dots \, g_n)^\top$ satisfies \eqref{eq:20}, i.e.,
\begin{equation*}
g_{x^j} = L  \, g_{x^{j+1}},   \quad j=1,\dots, n-1.
\end{equation*}
\end{Lemma}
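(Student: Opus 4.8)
The plan is to prove the stronger statement that the \emph{entire} matrix
\[
P := \sum_{i=1}^{n} v_i(M)\,L^{n-i}
\]
satisfies the recursion $\partial_{x^j} P = L\,\partial_{x^{j+1}} P$, and then read off the last column. Indeed, by \eqref{r1} the vector $g=(g_1,\dots,g_n)^\top$ is exactly the last column of $P$, i.e. $g = P e_n$, since for the first companion form \eqref{first_again} the last column of $L^{n-i}$ is $e_i$.

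First I would record three elementary facts. Because $M = x^1 L^{n-1}+\dots+x^n\operatorname{Id}$ is a polynomial in $L$ with the coordinates $x^k$ as scalar coefficients, $M$ commutes with $L$; hence so do $v_i(M)$, the matrix $P$, and (below) $Q$. Next, differentiating in $x^j$ with the entries $f$ of $L$ held fixed gives the clean formula $\partial_{x^j} M = L^{n-j}$. Finally, for the companion form one has $L e_n = e_{n-1}$.

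The key step, and the one I expect to require the most care, is the chain rule for the matrix functions $v_i(M)$: I claim $\partial_{x^j} v_i(M) = v_i'(M)\,L^{n-j}$, with no divided-difference correction terms. For a matrix argument this simple form holds precisely because the increment $\partial_{x^j} M = L^{n-j}$ commutes with $M$; one can verify it term-by-term on the convergent power series of $v_i$, using $\partial_{x^j} M^m = m\,M^{m-1} L^{n-j}$, which again relies only on $[M,L]=0$ (a consequence of $\gl$-regularity of $L$). Granting this and setting $Q := \sum_i v_i'(M) L^{n-i}$, which also commutes with $L$, I obtain $\partial_{x^j} P = L^{n-j} Q$ for every $j$.

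Then for $1 \le j \le n-1$ one has $L^{n-j} = L\cdot L^{n-j-1}$, so as a matrix identity $\partial_{x^j} P = L\,L^{n-j-1} Q = L\,\partial_{x^{j+1}} P$. Applying both sides to the constant vector $e_n$ and using $g = P e_n$ yields $g_{x^j} = (\partial_{x^j} P)e_n = L(\partial_{x^{j+1}} P)e_n = L\,g_{x^{j+1}}$, which is \eqref{eq:20}. In fact the same computation shows that every column of $P$ solves \eqref{eq:20}.
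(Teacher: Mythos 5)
Your proof is correct. The computational core --- freezing $f$ (hence $L$), using $\partial_{x^j}M = L^{n-j}$, and applying the commutative chain rule $\partial_{x^j}v_i(M) = v_i'(M)\,L^{n-j}$ --- is exactly the computation the paper performs when it differentiates \eqref{r1} to obtain \eqref{r2} and \eqref{r3}; your care in justifying the chain rule via the power series of $v_i$ is, if anything, more explicit than the paper's. Where you genuinely diverge is the concluding step. The paper stays with the coefficient representation \eqref{r1}: it equates $L^{n-j}Q$ with $\sum_i \pd{g_i}{x^j}L^{n-i}$, multiplies the $(j+1)$-st identity by $L$, and then must invoke the Cayley--Hamilton theorem to re-expand the resulting $L^n$ term and the linear independence of $\operatorname{Id}, L, \dots, L^{n-1}$ (i.e., $\gl$-regularity) in order to compare coefficients and extract the vector relation $g_{x^j} = \Lfirst\, g_{x^{j+1}}$. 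You instead keep the matrix identity $\partial_{x^j}P = L\,\partial_{x^{j+1}}P$ and evaluate it on the constant vector $e_n$, using $g = Pe_n$; this makes the final step trivial --- no Cayley--Hamilton, no coefficient comparison --- and gives the marginally stronger statement that every column of $P$ solves \eqref{eq:20}. The two routes meet because the coefficients in \eqref{r1} coincide with the entries of the last column of $P$, a fact both you and the paper rely on; the paper's version has the mild advantage of producing the relation directly in the form in which $g$ is used afterwards (Lemma \ref{lem2'} and the closing argument of the section). One small slip to fix: $[M,L]=0$ holds simply because $M$ is by construction a polynomial in $L$, as you correctly say at the outset --- it is not ``a consequence of $\gl$-regularity''; regularity is needed only for the converse (that anything commuting with $L$ is a polynomial in $L$), which underlies the uniqueness of the expansion \eqref{r1} but is nowhere needed in your argument.
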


\begin{proof}

Differentiating \eqref{r1} w.r.t. $x^j$ we get
\begin{equation}\label{r2}
L^{n-j}(v'_1(M) L^{n - 1} + \dots + v'_n (M) \operatorname{Id}) = \pd{g_1}{x^j} L^{n - 1} + \dots + \pd{g_n}{x^j} \operatorname{Id}.
\end{equation}
Similarly, differentiating \eqref{r1} w.r.t $x^{j+1}$  we get 
\begin{equation}\label{r3}
    L^{n-j-1} \bigl(v'_1(M) L^{n - 1} + \dots + v'_n (M) \operatorname{Id} \bigr) = \pd{g_1}{x^{j+1}} L^{n - 1} + \dots + \pd{g_n}{x^{j+1}} \operatorname{Id}.
\end{equation}
Comparing \eqref{r3} and \eqref{r2} gives
$$
L \left(\pd{g_1}{x^{j+1}} L^{n - 1} + \dots + \pd{g_n}{x^{j+1}}\right) = \pd{g_1}{x^j} L^{n - 1} + \dots + \pd{g_n}{x^j} 
$$
Applying the Cayley--Hamilton theorem (i.e., $L^n = f_1 L^{n-1} +\dots + f_n$)  we obtain
$$
\begin{aligned}
    \left(\pd{g_2}{x^{j+1}} + f_1 \pd{g_1}{x^{j+1}}\right) L^{n - 1} &+ \dots + \left(\pd{g_n}{x^{j+1}} + f_{n - 1} \pd{g_1}{x^{j+1}}\right) L + f_n \pd{g_1}{x^{j+1}} \\
    &= \pd{g_1}{x^j} L^{n - 1} + \dots + \pd{g_n}{x^j} \operatorname{Id}.
\end{aligned}    
$$
Since $L^{n-1},\dots, L, \mathrm{Id}$ are linearly independent,  we get $g_{x^j} = \Lfirst g_{x^{j+1}}$ which coincides with 
the statement of the lemma as  $L=\Lfirst$.
\end{proof}

\begin{Lemma}\label{lem2'}
Let $\left( \pd{g}{f} \right)$ be the matrix of partial derivatives  $\pd{g_i}{f_j}$, $1\le i,j \le n$. Then we have
\begin{equation}\label{r5}
L \left( \pd{g}{f} \right) = \left( \pd{g}{f} \right) L. 
\end{equation}
\end{Lemma}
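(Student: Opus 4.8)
The plan is to exploit the fact that $g=(g_1,\dots,g_n)^\top$ is nothing but the last column of $\Phi:=\sum_i v_i(M)L^{n-i}$, i.e.\ $g=\Phi e_n$ (recall $L^{n-k}e_n=e_k$, so the last column of $\sum_k g_kL^{n-k}$ is indeed $(g_1,\dots,g_n)^\top$). The point is that $\Phi$ is an \emph{analytic function of $L$ alone}: since $M=x^1L^{n-1}+\dots+x^n\operatorname{Id}$ is a polynomial in $L$, we may write $\Phi=\Psi(L)$ with $\Psi(s)=\sum_i v_i\bigl(x^1 s^{n-1}+\dots+x^n\bigr)s^{n-i}$, a scalar analytic function depending on the parameters $x$ but \emph{not} on $f$. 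Thus all $f$-dependence of $g$ enters only through $L$, and I would write $\pd{g}{f_m}=\pd{\Phi}{f_m}e_n=D\Psi(L)\bigl[\pd{L}{f_m}\bigr]e_n$, where $D\Psi(L)[\,\cdot\,]$ is the Fréchet derivative of the matrix function $\Psi$ at $L$.

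Next I would record two elementary facts about the companion matrix $L=\Lfirst$. First, only its first column carries the $f_i$, so $\pd{L}{f_m}=e_m e_1^\top$. Second, $e_m=L^{n-m}e_n$ for $1\le m\le n$. Combining them gives the rewriting $\pd{L}{f_m}=L^{n-m}\,e_n e_1^\top$, in which the only $m$-dependence is the commuting prefactor $L^{n-m}$. The crucial step is then to pull this prefactor out of the Fréchet derivative: for any $A,B$ commuting with $L$ one has $D\Psi(L)[AHB]=A\,D\Psi(L)[H]\,B$ (for $\Psi(s)=s^p$ this is $\sum_{a+b=p-1}L^a(AHB)L^b=A\bigl(\sum_{a+b=p-1}L^aHL^b\bigr)B$, which extends to analytic $\Psi$ by linearity and density of polynomials, or via the integral representation $D\Psi(L)[H]=\tfrac{1}{2\pi i}\oint \Psi(z)(z\operatorname{Id}-L)^{-1}H(z\operatorname{Id}-L)^{-1}\,\ddd z$). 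Taking $A=L^{n-m}$, $H=e_n e_1^\top$ yields
\[
\pd{g}{f_m}=D\Psi(L)\bigl[L^{n-m}e_n e_1^\top\bigr]e_n=L^{n-m}\,D\Psi(L)\bigl[e_n e_1^\top\bigr]e_n=L^{n-m}V,
\]
where $V:=D\Psi(L)[e_n e_1^\top]e_n$ is the same vector for every $m$. Hence $\pd{g}{f}=\bigl(L^{n-1}V\ \ L^{n-2}V\ \cdots\ LV\ \ V\bigr)$ is a Krylov-type matrix built from $V$.

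It then remains to check \eqref{r5} columnwise. The $m$-th column of $L\,\pd{g}{f}$ is $L\cdot L^{n-m}V=L^{n-m+1}V$. On the other side, the $m$-th column of $L$ equals $e_{m-1}$ for $m\ge2$ and $f=\sum_j f_j e_j$ for $m=1$; consequently the $m$-th column of $\pd{g}{f}\,L$ equals $L^{n-(m-1)}V=L^{n-m+1}V$ for $m\ge2$, while for $m=1$ it equals $\sum_j f_j L^{n-j}V=L^nV$ by the Cayley--Hamilton theorem. The two sides agree for every $m$, which proves $L\,\pd{g}{f}=\pd{g}{f}\,L$.

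The main obstacle is precisely the crucial step: justifying that a factor commuting with $L$ can be extracted from the Fréchet derivative $D\Psi(L)$. Here one must be careful that $\Psi$ genuinely does not depend on $f$ (so that differentiating $\Phi=\Psi(L)$ in $f_m$ produces only $D\Psi(L)[\pd{L}{f_m}]$, with no extra term), and that the extraction identity survives the passage from monomials to the analytic $\Psi$ supplied by the functional calculus; both points are settled by the integral representation quoted above. Everything else — the companion-matrix identities $\pd{L}{f_m}=e_m e_1^\top$ and $e_m=L^{n-m}e_n$, together with the concluding Cayley--Hamilton bookkeeping — is routine.
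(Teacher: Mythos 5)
Your proof is correct, and after the first step it takes a genuinely different route from the paper's. Both arguments begin with the same observation: for fixed $x$, one has $g=\Psi(L)\,e_n$, where $\Psi$ is a scalar analytic function that does not depend on $f$ (the paper writes it as $F(L)$). At that point the paper simply invokes Proposition \ref{prop:33}, whose proof already shows that $f\mapsto F(L)e_n$ has Jacobian commuting with $L$ for \emph{every} analytic $F$, via the induction ``constant maps solve \eqref{eq:23}, and multiplication by $L$ sends solutions to solutions, since the Jacobian of $Lx(f)$ equals $L\bigl(\pd{x}{f}\bigr)+x^1\cdot\Id$''. You instead differentiate $\Psi(L)e_n$ directly: the chain rule, the companion identities $\pd{L}{f_m}=e_m e_1^\top$ and $e_m=L^{n-m}e_n$, and the extraction identity $D\Psi(L)[AHB]=A\,D\Psi(L)[H]\,B$ for $A,B$ commuting with $L$ yield $\pd{g}{f_m}=L^{n-m}V$ for a single vector $V$, after which \eqref{r5} is a columnwise check whose only nontrivial column (the first) is handled by the Cayley--Hamilton theorem. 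Your computation is longer but self-contained, and it exposes extra structure: the Jacobian $\bigl(\pd{g}{f}\bigr)$ is the Krylov matrix with columns $L^{n-1}V,\dots,LV,V$, hence itself a polynomial in $L$ --- exactly what one expects, since any matrix commuting with a $\gl$-regular companion matrix is a polynomial in it. Both arguments pass from polynomials to analytic functions by density, and your alternative justification of the extraction identity via the Cauchy-integral representation of the Fr\'echet derivative is sound (cf. \cite{higham}, already cited in the paper); you also correctly flag the one delicate point, namely that $\Psi$ carries no $f$-dependence, so differentiating in $f_m$ produces only the Fr\'echet-derivative term.
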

\begin{proof}
As already noticed, $g_i=g_i(x,f)$ are the entries of the last column of $\sum v_i(M) L^{n-i}$, if $L$ is written in a companion basis, i.e., $L=\Lfirst$.  On the other hand for fixed $x$, the expression  $\sum v_i(M) L^{n-i}$ can be treated as an analytic function $F(L)$.  After this remark,  \eqref{r5} is just a part of the conclusion Proposition \ref{prop:33} (with $x$  replaced by $g$).
\end{proof}

Consider the implicit equation $f = g(x, f)$. Differentiating it w.r.t. $x^j$ and $x^{j+1}$ we get $f_{x^j} = g_{x^j} + \Bigl(\pd{g}{f}\Bigr) f_{x^j}$ and $f_{x^{j+1}} = g_{x^{j+1}} + \Bigl(\pd{g}{f}\Bigr) f_{x^{j+1}}$. Multiplying by $L$ and subtracting we get
\begin{equation}\label{r7}
f_{x^{j}} - L f_{x^{j+1}} = g_{x^{j}} - L g_{x^{j+1}} + \left(\pd{g}{f}\right) f_{x^j} - L \left( \pd{g}{f} \right) f_{x^{j+1}}    
\end{equation}
Applying  Lemma \ref{lem1} and \eqref{r5},  we see that \eqref{r7} can be written as
$$
\left(\operatorname{Id} - \left(\pd{g}{f}\right)\right) (f_{x^j} - Lf_{x^{j+1}}) = 0.
$$
As already noticed,  the matrix $\Bigl(\pd{g}{f}\Bigr)$ vanishes for $x=(0,\dots,0,x^n)$, therefore locally $\operatorname{Id} - \Bigl(\pd{g}{f}\Bigr)$ is invertible and we conclude that $f$ satisfies \eqref{eq:20} or, equivalently \eqref{first_set}, which completes the proof of Theorem \ref{main:2}.

 \section{Nijenhuis perturbations of a Jordan block}\label{sect:perturb}

Our next goal is to discuss Nijenhuis perturbations of a Jordan block $J_0$, that is, Nijenhuis operators of the form $L(x)= J_0 \,+$ higher order terms.  
Recall that a generic Nijenhuis perturbation of $J_0$ is described by the following 

\begin{Proposition}[\cite{Nijenhuis1}, see also Remark \ref{rem:1.4}]\label{prop:5.1}
Let $L$ be a Nijenhuis operator such that at a point $\mathsf p$,  the operator $L(\mathsf p)$ is similar  to the (nilpotent) Jordan block $J_0$.  Assume that the differentials of the coefficients of the characteristic polynomial of $L$ are linearly independent at $\mathsf p$.   Then in a neighbourhood of $\mathsf p$ there exist local coordinates $x^1, \dots, x^n$ with $\mathsf p \simeq (0,\dots, 0)$ in which $L(x)$ is given by \eqref{eq:nilp_pert}.
\end{Proposition}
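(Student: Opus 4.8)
The plan is to deduce the statement from Theorem \ref{main:1} by making a judicious choice of coordinates. First I would note that since $L(\mathsf p)$ is similar to a single Jordan block, $L$ is $\gl$-regular at $\mathsf p$; as $\gl$-regularity is an open condition (the adjoint orbit keeps its maximal dimension nearby), $L$ remains $\gl$-regular on a whole neighbourhood of $\mathsf p$. Being Nijenhuis and $\gl$-regular, by the implication (i)$\Rightarrow$(ii) of Theorem \ref{main:1} the operator admits a companion coordinate system $y=(y^1,\dots,y^n)$ in which $L=\Lfirst(f(y))$, where $f_1,\dots,f_n$ are the coefficients of the characteristic polynomial and satisfy \eqref{first_set}. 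These $f_i$ are intrinsic functions on $\mathsf M$, independent of the coordinates used, and since $J_0$ is nilpotent we have $f_i(\mathsf p)=0$ for all $i$. The differential non-degeneracy hypothesis says that $\ddd f_1(\mathsf p),\dots,\ddd f_n(\mathsf p)$ are linearly independent, so by the inverse function theorem $x:=(f_1,\dots,f_n)$ is itself a local coordinate system sending $\mathsf p$ to the origin.

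It then remains to check that in these particular coordinates $x^i=f_i$ the operator takes the form \eqref{eq:nilp_pert}. The key observation I would exploit is that the Jacobian $J=\partial f/\partial y$ of the transition $y\mapsto x=f$ commutes with the matrix of $L$. Indeed, this is precisely the matrix form \eqref{eq:21} of the defining system \eqref{first_set}: writing \eqref{first_set} as the column relations $f_{y^j}=L f_{y^{j+1}}$ and appending the first-column relation, which holds automatically by the Cayley--Hamilton theorem, one gets $JL=LJ$ exactly as in Lemma \ref{lem:bols2.1}. Under the change of variables $x=f$ the matrix of $L$ transforms by conjugation by $J$; since $J$ commutes with $L$, this conjugation is trivial and the matrix of $L$ in the $x$-coordinates coincides with its matrix $\Lfirst(f)=\Lfirst(x)$ in the $y$-coordinates. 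Because the first column of a first companion matrix is always the vector of characteristic-polynomial coefficients, and these now equal the coordinate functions $x^i=f_i$, the resulting form is exactly \eqref{eq:nilp_pert}.

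The only genuine subtlety, and the step I would treat most carefully, is the passage to the \emph{specific} coordinates $x^i=f_i$: Theorem \ref{main:1} only guarantees \emph{some} companion coordinate system, whereas here I must show that the intrinsic functions $f_i$ themselves serve as companion coordinates. This is what the commutation $JL=LJ$ buys us, and it rests essentially on two ingredients pulling in tandem: the $\gl$-regularity (which makes $f_{y^n}$ a cyclic vector, so that the columns $f_{y^n},Lf_{y^n},\dots,L^{n-1}f_{y^n}$ of $J$ are independent and $J$ is invertible — this is in fact equivalent to the non-degeneracy hypothesis) and the Nijenhuis condition (which gives \eqref{first_set}, hence \eqref{eq:21}). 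I would finally record that this canonical perturbation is the analogue of Arnold's versal deformation of a Jordan block and is exactly the form anticipated in Remark \ref{rem:1.4}.
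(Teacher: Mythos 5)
Your proof is correct, but it follows a genuinely different (and heavier) route than the one the paper intends. The paper does not reprove this proposition: it is imported from \cite{Nijenhuis1} and, as Remark \ref{rem:1.4} indicates, rests on the coordinate-free identity of \cite[Proposition 2.2]{Nijenhuis1}, restated in the Appendix of this paper: for \emph{any} Nijenhuis operator in \emph{arbitrary} coordinates $u$ one has $\bigl(\tfrac{\partial f}{\partial u}\bigr) L(u) = \Lfirst(f)\,\bigl(\tfrac{\partial f}{\partial u}\bigr)$. Given differential non-degeneracy, the map $u\mapsto f(u)$ is a local diffeomorphism, and conjugating by its Jacobian turns this identity directly into \eqref{eq:nilp_pert}; the Jordan-block hypothesis is used only to place $\mathsf p$ at the origin via $f_i(\mathsf p)=0$. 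You instead first invoke Theorem \ref{main:1} to produce \emph{some} companion coordinate system $y$, and then observe that the further transition $y \mapsto x=f$ has Jacobian commuting with $\Lfirst(f(y))$ --- which is precisely the special case of the above identity when the initial coordinates are already companion coordinates. Your derivation of that commutation from \eqref{first_set}, \eqref{eq:21} and Cayley--Hamilton is sound, and there is no circularity, since the proof of Theorem \ref{main:1} nowhere uses this proposition. What each route buys: yours stays entirely inside the present paper's toolbox, but it needs real analyticity and $\gl$-regularity on a whole neighbourhood (both enter through Theorem \ref{main:1}, whose proof is the involutive-PDE machinery), whereas the intended argument is pointwise-algebraic, works in the smooth category, and requires no prior reduction to companion form. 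One slip of phrasing to fix: $\gl$-regularity does not ``make $f_{y^n}$ a cyclic vector'' --- it only guarantees that cyclic vectors exist; it is the non-degeneracy hypothesis (equivalently, invertibility of $J$, whose columns are $L^k f_{y^n}$) that forces $f_{y^n}$ to be cyclic, as your own parenthetical equivalence in fact concedes.
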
  

In this case, it is easily seen that at a generic point $\mathsf{q} \in U(\mathsf{p})$, the operator $L(\mathsf{q})$ becomes semisimple with distinct eigenvalues.  Moreover,  for any collection of real and complex conjugate numbers  $S=\{\lambda_1, \dots , \lambda_k, \mu_1, \bar\mu_1,\dots, \mu_s ,\bar\mu_s\}$  $(k+2s=n)$  sufficiently close to zero and not necessarily distinct, there exists a unique point $\mathsf{q}\in U(\mathsf{p})$ such that $S$ is the spectrum of $L(\mathsf{q})$.  In particular, we see that in $U(\mathsf{p})$ we can find operators of all possible algebraic types that are potentially allowed for $\gl$-regular operators  (this means that for repeated eigenvalues there will be only one Jordan block). 

It is natural to ask whether there are other scenarios of Nijenhuis perturbations,  for instance,  with a prescribed algebraic structure of $L$ at a generic point $\mathsf{q}$. For instance, can a Jordan block $J_0$  split into two smaller Jordan blocks of prescribed sizes $k_1$, $k_2$, $k_1+k_2=n$? 

The answer is positive. Let us show that all scenarios are possible.  According to Theorems \ref{main:1} and \ref{main:2}, we may assume that $L(x)=\Lfirst(x)$ where $\Lfirst$ is given by \eqref{first} and the coefficients $f_1(x),\dots, f_n(x)$ of the characteristic polynomial $\chi_{L}$ satisfy \eqref{first_set}. To construct the corresponding perturbation one just needs to make sure that the desired scenario happens on the initial straight line $x(\tau)=(0,\dots,0,\tau)$.  Assume that on this initial line at a generic point $\tau\in (-\varepsilon,\varepsilon)$, the characteristic polynomial  
\begin{equation}
\label{eq:29}
\chi_{L(x(\tau))}(t)= t^n - f_1(x(\tau))t^{n-1} -\dots  - f_n(x(\tau))=t^n -v_1(\tau)t^{n-1} -\dots  - v_n(\tau)
\end{equation}
factorises as follows
$$
\chi_{L(x(\tau))} (t) = (t - \mu_1(\tau))^{k_1} (t - \mu_2(\tau))^{k_2}\cdots (t - \mu_s(\tau))^{k_s}. 
$$
where $\mu_i(\tau)$ are some real analytic functions in $\tau$  (perhaps complex valued).  In other words, at a generic points of the initial line $x(\tau)$, this polynomial has  $s$ distinct roots with multiplicities  $k_1,\dots, k_s$. 

According to Theorem \ref{main:2},  to describe the solution $f=f(x)$  with given initial conditions $f(x(\tau))=v(\tau)$ we should consider the relation
$$
r(L,M) = L^n - v_1(M) L^{n-1} - \dots - v_{n-1}(M) L  - v_n(M)=0,  \quad \mbox{with } \  M = \sum_{i=1}^n x^i L^{n-i}, 
$$
and then ``solve'' it to find the coefficients of the characteristic polynomial of $L$ in terms of $x^1, \dots , x^n$. Notice that $r(L,M)$ is just the polynomial  \eqref{eq:29} after the substitution $\tau\mapsto M$, $t \mapsto L$. We know that this polynomial factorises (for scalars $\tau$ and $t$,  but here the difference between scalars and matrices is not essential), hence we can write
$$
r(L,M) = (L - \mu_1(M))^{k_1} (L - \mu_2(M))^{k_2}\cdots (L - \mu_s(M))^{k_s} =0
$$ 
where  $\mu$ is now treated as an analytic matrix function.

The eigenvalues of $L$  (as functions in $x$) can now be found from relations of the form:
$$
\lambda = \mu_i \left(x_1 \lambda^{n-1} + x_2 \lambda^{n-2} + \dots + x_{n-1}\lambda + x_n\right)
$$
By the Implicit Function Theorem this can be done uniquely 
in a neighbourhood of a point $(0,\dots,0,\tau)$ in such a way that  $\lambda (0,\dots,0, \tau) = \mu_i(\tau)$, as needed. 

No other eigenvalues may occur.  The multiplicities of these eigenvalues will be as expected  since this condition is fulfilled on the initial line. This shows that at a generic point we have
$$
\chi_{L(x)}(t) = (t - \lambda_1(x))^{k_1} (t - \lambda_2(x))^{k_2}\cdots (t - \lambda_s(x))^{k_s} 
$$ 
where $\lambda_i(x)$  will be real analytic functions such that  $\lambda_i(0,\dots,0,\tau) = \mu_i(\tau)$  (these relations hold as soon as $\mu_i(\tau)$ makes sense).

This argument leads us to the following property of the discriminant of the polynomial $\chi_f = t^n - f_1 t^{n-1} - \dots - f_{n-1} t - f_n$.

\begin{Proposition}
Let $f(x) = \bigl(f_1(x),\dots, f_n(x)\bigr)$ be a solution of \eqref{first_set_again}.  Assume that the discriminant $\mathcal D(f_1,\dots, f_n)$ of the polynomial  $\chi_{f(x)}(t)=t^n - f_1(x)t^{n-1} -\dots -f_{n-1}(x) t - f_n(x)$ vanishes on the initial straight line  $x(\tau) = (0, \dots, 0, \tau)$.  Then the discriminant vanishes identically for all $x=(x^1,\dots, x^n)$.

Similarly, for each partition $n=k_1+\dots+k_s$, consider  the algebraic variety  $\overline W_{k_1,\dots,k_s}\subset \R^n(f_1,\dots,f_n)$ that is the Zariski closure of the set $W_{k_1,\dots,k_s}$ of those $f\in \R^n$ for which $\chi_f(t)$ has $s$ distinct roots with multiplicities $k_1,\dots,k_s$.     If  $f(x(\tau)) \in \overline W_{k_1,\dots,k_s}$ for the initial line $x(\tau) = (0, \dots, 0, \tau)$, then $f(x)\in \overline W_{k_1,\dots,k_s}$ for all $x=(x^1,\dots, x^n)$. \end{Proposition}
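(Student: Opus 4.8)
The plan is to deduce both assertions from Theorem~\ref{main:2} together with the factorisation argument already carried out in the discussion preceding the Proposition, proving the general (layer) statement first and obtaining the discriminant statement as the special case of the partition $(1,\dots,1,2)$, whose Zariski closure $\overline W_{1,\dots,1,2}$ is precisely the discriminant hypersurface $\{\mathcal D=0\}$. The starting point is the elementary description of the closure of a layer: a monic polynomial $p(t)=t^n-f_1t^{n-1}-\dots-f_n$ lies in $\overline W_{k_1,\dots,k_s}$ if and only if it factorises as $p(t)=\prod_{i=1}^s (t-\mu_i)^{k_i}$ for some, not necessarily distinct, $\mu_i\in\mathbb C$. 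The hypothesis $f(x(\tau))\in\overline W_{k_1,\dots,k_s}$ then says exactly that, for every $\tau$, the initial polynomial $r(\lambda,\tau)=\lambda^n-v_1(\tau)\lambda^{n-1}-\dots-v_n(\tau)$ admits such a factorisation $r(\lambda,\tau)=\prod_{i=1}^s(\lambda-\mu_i(\tau))^{k_i}$, where $v_i(\tau)=f_i(x(\tau))$.

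Next I would fix a generic value $\tau_0$ on the initial line and choose local analytic branches $\mu_i(\tau)$ near $\tau_0$. By Theorem~\ref{main:2} the solution $f(x)$ is recovered by resolving $r(L,M)=0$ with $M=\sum_j x^jL^{n-j}$. Substituting $\tau\mapsto M$ and $\lambda\mapsto L$ and using that $L$ and $M$ commute, the scalar factorisation lifts to the matrix identity $r(L,M)=\prod_{i=1}^s\bigl(L-\mu_i(M)\bigr)^{k_i}=0$. Reading off the spectrum exactly as in the discussion above — that is, solving $\lambda=\mu_i\bigl(\sum_j x^j\lambda^{n-j}\bigr)$ by the Implicit Function Theorem in a neighbourhood of $(0,\dots,0,\tau_0)$, with $\lambda_i(0,\dots,0,\tau)=\mu_i(\tau)$ and no spurious eigenvalues appearing — yields $\chi_{L(x)}(t)=\prod_{i=1}^s (t-\lambda_i(x))^{k_i}$ on that neighbourhood, hence $f(x)\in\overline W_{k_1,\dots,k_s}$ there. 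Since membership in $\overline W_{k_1,\dots,k_s}$ is cut out by finitely many polynomial equations $P_\alpha(f_1,\dots,f_n)=0$, the real-analytic functions $x\mapsto P_\alpha(f(x))$ vanish on an open set and therefore, by the identity theorem and connectedness of the domain, vanish identically; this gives $f(x)\in\overline W_{k_1,\dots,k_s}$ for all $x$.

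I expect the main obstacle to be the global multivaluedness of the roots $\mu_i(\tau)$: analytic branches exist only locally, and the $\mu_i$ may collide at special values of $\tau$, so one cannot write down a single global factorisation valid for all $\tau$ at once. The way around this is precisely the device used above — to carry the whole argument out near a generic $\tau_0$, where local analytic branches are guaranteed (the genuinely distinct roots depend analytically on $\tau$ by the Implicit Function Theorem, while coincidences that persist for all $\tau$ are harmless, forcing only some $\lambda_i\equiv\lambda_j$) — and then to propagate the conclusion to the entire neighbourhood through the Zariski-closed, hence analytically closed, nature of the condition $f\in\overline W_{k_1,\dots,k_s}$.

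Finally, the discriminant statement follows by specialising to $(k_1,\dots,k_s)=(1,\dots,1,2)$. The vanishing $\mathcal D(f(x(\tau)))=0$ on the initial line means that each $r(\lambda,\tau)$ has a repeated root, i.e. $f(x(\tau))\in\overline W_{1,\dots,1,2}$; the layer statement then gives $f(x)\in\overline W_{1,\dots,1,2}$ for all $x$, which is exactly $\mathcal D(f(x))\equiv 0$.
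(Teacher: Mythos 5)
Your proposal is correct and follows essentially the same route as the paper: the paper's own justification is precisely the preceding discussion, which lifts the factorisation $\chi_{L(x(\tau))}(t)=\prod_i(t-\mu_i(\tau))^{k_i}$ on the initial line to the matrix identity $r(L,M)=\prod_i\bigl(L-\mu_i(M)\bigr)^{k_i}=0$ via Theorem \ref{main:2} and then tracks the eigenvalues $\lambda_i(x)$ by the Implicit Function Theorem. Your additions — working near a generic $\tau_0$ to secure analytic branches, propagating membership in $\overline W_{k_1,\dots,k_s}$ to the whole domain by the identity theorem for real-analytic functions, and deducing the discriminant statement as the special case $\overline W_{1,\dots,1,2}=\{\mathcal D=0\}$ — are sensible refinements of points the paper leaves implicit, not a different method.
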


Clearly, the second part of this proposition immediately implies Theorem \ref{main:4}.

Let us finally discuss an example showing how Theorem \ref{main:2}  works in practice to construct explicit examples of Nijenhuis operators with non-trivial singularities.

\begin{Ex}\label{exam:5.1}{\rm
Consider the three dimensional case and, in the settings of Theorem \ref{main:2},  
define the initial conditions in such a way that on the initial line $x(\tau) = (0,0,\tau)$  the characteristic polynomial of $L$ takes the form
$$
\chi_{L(x(\tau))}(\lambda) = (\lambda - \tau)^2 (\lambda - 2\tau) = \lambda^3 - 4\tau \lambda^2 + 5\tau^2 \lambda - 2\tau^3,
$$
or equivalently
$$
f_1 (0,0,\tau) = 4\tau = v_1(\tau), \quad f_2(0,0,\tau) = -5\tau^2=v_2(\tau), \quad f_3(0,0,\tau)=2\tau^3=v_3(\tau).
$$

The algorithm described in Theorem \ref{main:2}  allows us to reconstruct the functions $f_1, f_2, f_3$.  To that end we need to 
use the matrix relation 
$$
L^3 - (4M) L^2 + (5M^2) L - 2M^3 = 0   \quad \mbox{with } \ M=x_1L^2 + x_2L + x_3\operatorname{Id},
$$
to express the coefficients of the characteristic polynomial of $L$ in terms of $x_1, x_2$ and $x_3$.

Notice that  this relation can be rewritten as $(L-M)^2(L-2M)=0$  (this follows immediately from factorisation of the characteristic polynomial on the initial line $x(\tau)$).   But this factorisation immediately allows us to find the eigenvalues of $L$ by taking the roots of the polynomial
$$
(\lambda - x_1 \lambda^2 - x_2\lambda - x_3)^2 (\lambda - 2x_1 \lambda^2 - 2x_2\lambda - 2x_3)=0
$$
Since we are working in a neighbourhood of the origin, we are interested in specific roots, namely those which, on the initial curve, coincide with  the above prescribed roots, that is, 
$$
\lambda_1(0,0,x_3) = \lambda_2(0,0,x_3) = x_3, \quad \lambda_3(0,0,x_3) = 2x_3. 
$$

In this particular case we just need to choose the right root  (one of the two) of the corresponding quadratic equation. Namely,
$$
\lambda - x_1 \lambda^2 - x_2\lambda - x_3=0   \quad \Rightarrow \quad  \lambda = \frac{2x_3}{(1-x_2)+\sqrt{(1-x_2)^2 - 4x_1x_3}}
$$
$$
\lambda - 2x_1 \lambda^2 - 2x_2\lambda - 2x_3=0   \quad \Rightarrow \quad  \lambda = \frac{4x_3}{(1-2x_2)+\sqrt{(1-2x_2)^2 - 16x_1x_3}}
$$

The root of the first equation is an eigenvalue of $L$ of multiplicity 2, whereas the root of the second equation is an eigenvalue of multiplicity one. As a result we have found explicit expressions for the eigenvalues of the Nijenhuis operator $L$ in coordinates $x_1, x_2, x_3$:
\begin{equation}
\label{eq:2}
\lambda_1=\lambda_2 = \frac{2x_3}{(1-x_2)+\sqrt{(1-x_2)^2 - 4x_1x_3}}, \quad \lambda_3 = \frac{4x_3}{(1-2x_2)+\sqrt{(1-2x_2)^2 - 16x_1x_3}}
\end{equation}

The final conclusion is that the operator 
$$
\Lfirst = \begin{pmatrix}
f_1(x) & 1 & 0\\
f_2(x) & 0 & 1 \\
f_3(x) & 0 & 0
\end{pmatrix}  \quad\mbox{with } \ 
\begin{array}{rl}
f_1 &= \lambda_1 + \lambda_2 +\lambda_3, \\
f_2 &= - \lambda_1\lambda_2 - \lambda_2\lambda_3 - \lambda_3\lambda_1, \\
f_3 &= \lambda_1\lambda_2\lambda_3,
\end{array}
$$
where $\lambda_i$ are defined by \eqref{eq:2} is a Nijenhuis operator in first companion form.  This is an example of a Nijenhuis perturbation of the nilpotent $3\times 3$ Jordan block $J_0$ under which $J_0$ splits into two Jordan blocks of size $2$ and $1$ with non-constant eigenvalues.  
}\end{Ex}

\section{Local classification of $\gl$-regular Nijenhuis operators in dimension two and its applications}\label{sect:dim2}

The goal of this section is to describe local normal forms for $\gl$-regular  Nijenhuis operators at singular points  in dimension 2. 
However, for the sake of completeness we first recall the list of (algebraically) generic types of such operators along with their {\it local} canonical forms:

\begin{itemize}
\item Two distinct real eigenvalues: 
$
L=\begin{pmatrix}
f(x) & 0 \\ 0 & g(y)
\end{pmatrix}
$,   where $f(x)$ and $g(y)$ are smooth functions such that $f(x) \ne g(y)$ for all $(x,y)$.  In the real analytic case, $f(x)$ is either constant or can be reduced, by an appropriate local change of coordinates,  to  $f(x)=f_0 \pm x^{2m}$ or $f(x)=f_0 + x^{2m-1}$, $m\in\mathbb N$, and similarly for $g(y)$.
\item Two complex conjugate eigenvalues:
$
L=\begin{pmatrix}
f(x,y) & -g(x,y) \\
g(x,y) & f(x,y)
\end{pmatrix}
$,  where $h=f + \mathrm{i}\, g$ is a holomorphic function of the complex variable $z=x+
\mathrm{i}\,y$, $g(x,y)\ne 0$ for all $(x,y)$.  This function $h(z)$ is either constant or can be reduced, by an appropriate local change of coordinates, to  $h(z) = h_0 + z^m$, $m\in\mathbb N$.
\item  Jordan block:
$
L=\begin{pmatrix}
f(y) & 1 \\ 0 & f(y)
\end{pmatrix}
$, where $f(y)$ is a smooth function. As above, in the real analytic case, $f(y)$  is either constant or can be reduced, by an appropriate local change of coordinates,  to  $f(x)=f_0 \pm x^{2m}$ or $f(x)=f_0 + x^{2m-1}$, $m\in\mathbb N$.
\end{itemize} 

This classification is easy and well known.  A non-trivial problem is to describe local behaviour of $L$ near a singular point $\mathsf p$ at which the algebraic type of $L$ changes.  In dimension 2 under the $\gl$-regularity assumption,  there is only one possibility for $L(\mathsf p)$, namely,  this operator (after an appropriate chage of coordinates) is a Jordan block:
$$
L(\mathsf p) = \lambda \operatorname{Id} + J_0, \quad \mbox{where } J_0 = \begin{pmatrix} 0 & 1 \\ 0 & 0 \end{pmatrix}, \ \lambda=\mathrm{const}\in\R.
$$  

Since $L - \lambda \operatorname{Id}$ is still a Nijenhuis operator,  we will assume w.l.o.g. that $L(\mathsf p) = J_0$ and our problem reduces to classification of Nijenhuis perturbations of the nilpotent Jordan block $J_0$.  Below we will describe all possible normal forms for such perturbations, i.e., for Nijenhuis operators $L$ such that $L(\mathsf p)=J_0$.  To our great surprise, they are all polynomial. Before stating our classification result, we notice that there are two essentially different cases depending on the coefficients of the characteristic polynomial 
$$
\chi_L(\lambda) = \det(\lambda{\cdot}\operatorname{Id} - L) = \lambda^2 - v \lambda - u, \quad  v=\tr L, \ u = -\det L.
$$ 
In the real analytic case, there are two possibilities:  either $\ddd v \wedge \ddd u \equiv 0$ or $\ddd v \wedge \ddd u \ne 0$ on an open everywhere dense subset. In the latter case, the operator $L$ can be completely reconstructed from $v$ and $u$ and the relation  (see \cite[Corollary 2.2]{Nijenhuis1}):
\begin{equation}
\label{eq:bols01}
L = \begin{pmatrix} v_x & v_y \\ u_x  & u_y  \end{pmatrix}^{-1} 
\begin{pmatrix}
v & 1 \\ u & 0 
\end{pmatrix} \begin{pmatrix} v_x & v_y \\ u_x  & u_y  \end{pmatrix}, \qquad v=\tr L, \ u=-\det L.
\end{equation}
At those points where the Jacobi matrix is not invertible, we define $L$ by continuity.  In other words, in the above formula we should automatically observe ``cancellation of the denominator''  $v_xu_y - v_y u_x$  involved in the formula of the inverse matrix.
For this reason in  Theorem \ref{thm:dim2} below,   when appropriate, instead of the matrix of $L$ we will give formulas for $v(x,y)$ and $u(x,y)$  as they are much simpler and more intuitive.  The reader may easily ``reconstruct'' $L$ from \eqref{eq:bols01} and, in particular, see the above mentioned cancellation.  

If $\ddd v \wedge \ddd u \equiv 0$, then \eqref{eq:bols01} makes no sense, but we may still use another more general relation
(see \cite[Proposition 2.2]{Nijenhuis1}):
\begin{equation}
\label{eq:bols03}
\begin{pmatrix} v_x & v_y \\ u_x  & u_y  \end{pmatrix} \begin{pmatrix} l^1_1 &  l^1_2 \\  l^2_1 & l^2_2 \end{pmatrix} = 
\begin{pmatrix}
v & 1 \\ u & 0 
\end{pmatrix} \begin{pmatrix} v_x & v_y \\ u_x  & u_y  \end{pmatrix}, \qquad v=\tr L, \ u=-\det L, \ L=\begin{pmatrix} l^1_1 &  l^1_2 \\  l^2_1 & l^2_2 \end{pmatrix}.
\end{equation}

We will assume that $L$ is defined in a neighbourhood of the origin $\mathsf p=(0,0)\in\R^2(x,y)$ and coordinate transformations always leave the origin fixed. 
The theorem below provides the complete list of normal forms for $L$ which are divided into several series.

\begin{Theorem}\label{thm:dim2}
Let $L$ be a Nijenhuis operator such that $L(\mathsf p) = \begin{pmatrix} 0 & 1 \\ 0 & 0 \end{pmatrix}$. Then in suitable local coordinates $(x,y)$,  this operator takes one of the following forms:

\begin{enumerate}

\item    Series  $L, M$ and $N$ {\rm(}for $k \geq 1, \epsilon = \pm 1${\rm)}: 
    \begin{equation}
\begin{aligned}
L_{\mathrm{nil}} = \left(\begin{array}{cc} 0 & 1 \\ 0 & 0 \end{array} \right), \quad 
L_{\mathrm{nd}} & = \left(\begin{array}{cc} x & 1 \\ y & 0 \end{array} \right), \quad 
M_{2k - 1} =  \left(\begin{array}{cc} 0 & 1 \\ 0 & y^{2k - 1}\!\! \end{array} \right), \quad
M^{\epsilon}_{2k} =  \left(\begin{array}{cc} 0 & 1 \\ 0 & \epsilon y^{2k} \! \end{array} \right), \\
N_{2k - 1} & =  \left(\begin{array}{cc} \!\! y^{2k - 1} & 1 \\ 0 & y^{2k - 1} \!\! \end{array} \right), \quad 
N^{\epsilon}_{2k} =  \left(\begin{array}{cc} \!\! \epsilon y^{2k} & 1 \\ 0 & \epsilon y^{2k} \!\! \end{array} \right)
\end{aligned}
\end{equation}

\item Series $O^{d,\epsilon}_{k, \mathrm{c}}$,  $k\ge 1$,  $d\ge 2k+1$, $\epsilon=\pm 1$, $\mathrm c=(c_0,\dots, c_{k-1})\in\R^k$ and we set $\epsilon=1$, if $d=2m+1$ is odd.  

The operator $L$ is defined by \eqref{eq:bols01} with $v=\tr L$ and $u=-\det L$ given by
$$
 v =  \alpha xy^{2k-1} + y^k \bigl(c_{k-1} y^{k-1} + \dots + c_1 y + c_0\bigr), \quad u= \epsilon \, y^d, \quad \alpha=k c^2_0 \left(1  - \frac{k}{d}\right)\ne 0.
 $$
 
\item Series $P^{k,\epsilon}_{s, \mathrm{c}}$,  $k\ge 1$,  $s\ge 2k$, $\epsilon=\pm 1$, $\mathrm c=(c_0,\dots, c_{k-1})\in\R^k$.

The operator $L$ is defined by \eqref{eq:bols01} with $v=\tr L$ and $u=-\det L$ given by
$$
v = \alpha x y^s +  y^{s-k+1} \bigl(c_{k-1} y^{k-1} + \dots + c_1 y + c_0\bigr) + 2\epsilon\, y^k, \quad u=  - y^{2k}, \quad  \alpha = 2\epsilon\, k c_0 \ne 0.
$$

\item Series $S^{2k, \epsilon}_{\mathrm{c}}$ and $S^{2k+1}_{\mathrm{c}}$,   $k\ge 1$,  $\mathrm c=(c_0,\dots, c_{k-1})\in\R^k$.

The operator $L$ is defined by \eqref{eq:bols01} with $v=\tr L$ and $u=-\det L$ given respectively by
$$
\begin{aligned}
v&= \alpha xy^{2k-1} + y^k \bigl(c_{k-1} y^{k-1} + \dots + c_1 y + c_0\bigr), \quad u = \epsilon \, y^{2k}, \quad \alpha=\frac{k}{2} \left(  c_0^2 + 4\epsilon \right)\ne 0, \\
 v&= \alpha xy^{2k} + y^{k+1} \bigl(c_{k-1} y^{k-1} + \dots + c_1 y + c_0\bigr), \quad  u=y^{2k+1}, \quad \alpha = 2k+1. 
\end{aligned}
$$ 
\end{enumerate}
\end{Theorem}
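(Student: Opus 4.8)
The plan is to reduce the whole classification to the normalisation of the two coordinate-independent functions $v=\tr L$ and $u=-\det L$, the coefficients of the characteristic polynomial $\chi_L(\lambda)=\lambda^2-v\lambda-u$. Since $L(\mathsf p)$ is the nilpotent Jordan block, both vanish at $\mathsf p$. On the dense open set where $\ddd v\wedge\ddd u\neq 0$ the operator is recovered from $(v,u)$ through \eqref{eq:bols01}, and in general through \eqref{eq:bols03}; moreover being Nijenhuis is a property of the tensor and is unchanged by coordinate transformations. Hence for the genuinely singular series $O,P,S$ it suffices to find an origin-fixing real-analytic change of variables in which $(v,u)$ becomes the listed polynomial pair (the operator then being reconstructed by \eqref{eq:bols01}), whereas for the functionally dependent series it is more convenient to normalise $L$ directly. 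The ``easy half'' of the statement, that each listed form really is Nijenhuis, I would dispose of first by substituting the given $v,u$ into \eqref{eq:bols01} and verifying \eqref{first_set}; this is routine and the substance lies in exhaustiveness.

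The engine of the argument is the following structural fact. In the companion coordinates provided by Theorem \ref{main:1} one has $f_1=v$, $f_2=u$, and for $n=2$ the system \eqref{first_set} reads $v_{x^1}=v\,v_{x^2}+u_{x^2}$, $u_{x^1}=u\,v_{x^2}$. A direct computation then shows that every root $\lambda$ of $\lambda^2-v\lambda-u$ satisfies the Hopf (inviscid Burgers) equation $\lambda_{x^1}=\lambda\,\lambda_{x^2}$; invariantly this is $L^*\ddd\lambda=\lambda\,\ddd\lambda$, and summing over the two eigenvalues gives the coordinate-free form
\begin{equation*}
L^*\ddd v=v\,\ddd v+\ddd u,\qquad L^*\ddd u=u\,\ddd v. \tag{$\star$}
\end{equation*}
The key point is that the eigenvalues are constant along the characteristics of the Hopf flow, so that a singular point is exactly a place where the two Hopf branches collide; the gradient catastrophe of this flow is what organises the list of normal forms, in the spirit of the versal deformations of \cite{Arnold}.

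With $(\star)$ in hand I would split according to $\ddd v\wedge\ddd u$. If $\ddd v\wedge\ddd u(\mathsf p)\neq0$ then $L$ is differentially non-degenerate and Remark \ref{rem:1.4} gives $L_{\mathrm{nd}}$ after setting $x=v$, $y=u$. If $\ddd v\wedge\ddd u\equiv0$ the pair is functionally dependent, and three subcases arise: both eigenvalues are the constant $0$ (yielding $L_{\mathrm{nil}}$); the discriminant $\mathcal D=v^2+4u$ vanishes identically, so $L$ is a Jordan block everywhere and the single eigenvalue $v/2$, being a function of one variable, is brought by a real-analytic reparametrisation to $\pm y^{2k}$ or $y^{2k-1}$ (series $N$); or exactly one eigenvalue is constant while the other, again a one-variable function, is normalised in the same way (series $M$). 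This also explains why $N$ and $L_{\mathrm{nil}}$ are non-singular: their algebraic type never changes.

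The remaining case $\ddd v\wedge\ddd u\not\equiv0$ with $\ddd v\wedge\ddd u(\mathsf p)=0$ is the genuinely hard one and produces $O,P,S$. Here the plan is to exploit the Hopf structure to straighten a characteristic direction, after which $u=-\lambda_1\lambda_2$ and $v=\lambda_1+\lambda_2$ are governed essentially by one variable; Weierstrass preparation reduces $u$ to a unit times $\pm y^{m}$, and the three series are separated by how the order of $u$ compares with that of $v^2$ and by the sign of $u$: $u=\epsilon y^{d}$ with $d>2k$ (real, higher-order collision) gives $O$, the definite case $u=-y^{2k}$ gives $P$, and the matched-order cases $u=\epsilon y^{2k}$, $u=y^{2k+1}$ give $S$. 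The integer $k$ records the order of the collision, $\epsilon$ its side, and $\mathrm c=(c_0,\dots,c_{k-1})$ the moduli that cannot be removed; the relations such as $\alpha=kc_0^2(1-k/d)$ are forced by matching top-order terms in $(\star)$, and $\alpha\neq0$ is exactly the non-degeneracy $\ddd v\wedge\ddd u\not\equiv0$. The main obstacle is this last case, where exhaustiveness, realisability of each normal form by an explicit coordinate change, and rigidity of the surviving parameters $\mathrm c$ must be established together by a delicate induction on the order of vanishing; the verification that each resulting form is Nijenhuis is then the short computation noted above.
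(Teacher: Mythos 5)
Your overall skeleton agrees with the paper's: work with $v=\tr L$, $u=-\det L$, use the two-dimensional form of \eqref{first_set} (your $(\star)$ is correct and is exactly what the paper uses), split on $\ddd v\wedge\ddd u$, and reconstruct $L$ via \eqref{eq:bols01}/\eqref{eq:bols03}. But the heart of the theorem --- the singular series $O$, $P$, $S$ --- is not proved in your proposal; it is only announced, with the answer transcribed from the statement and the admission that ``exhaustiveness, realisability \dots and rigidity \dots must be established together by a delicate induction.'' That induction is precisely the missing proof. The paper's route through this case needs three concrete ingredients you do not supply: (i) a normalization making the component $l^1_2\equiv 1$ by a change of $x$ alone (paper's Lemma \ref{lem1'}, via Cauchy--Kovalevskaya), which converts the reconstruction \eqref{eq:bols01} into the scalar PDE \eqref{rff} for $v$; (ii) a structure lemma (Lemma \ref{lem2}) asserting $v=v_0(y)+y^s(\alpha x+F)$ with $\alpha\ne 0$, proved by a power-series induction plus a Cauchy--Kovalevskaya uniqueness argument to exclude $s=\infty$; (iii) the final algebraic problem \eqref{alg} of deciding which polynomial parts $p_s$ make the reconstructed $l^1_2$ analytic and equal to $1$ at the origin, whose case analysis ($s<d-1$, $s>d-1$, $s=d-1$) produces $O$, $P$, $S$ and forces the constants $\alpha$. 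Your proposed separation criterion --- the order of $u$ versus that of $v^2$ together with the sign of $u$ --- is in fact insufficient: $P^{k,-1}_{s,\mathrm c}$ and $S^{2k,-1}_{\mathrm c}$ both have $u=-y^{2k}$ and $v$ of order $k$ in $y$; what distinguishes them is whether the leading terms of $v^2$ and $-4u$ cancel in the discriminant, equivalently how the exponent $s$ in $v=p(y)+\alpha xy^s$ compares with $d-1$.

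Two further gaps. First, Weierstrass preparation does \emph{not} reduce a general analytic $u(x,y)$ to a unit times $\pm y^m$ (take $u=x^2+y^2$); what makes this reduction possible is the equation $u_x=u\,v_y$ from your $(\star)$, which forces $u=(\mbox{function of }y)\cdot(\mbox{positive unit})$, and this is exactly the content of the paper's Lemma \ref{lem0} via \eqref{eq:bols02}. You never invoke that equation for this purpose, so your normalization of $u$ is unjustified as written. Second, in the functionally dependent case $\ddd v\wedge\ddd u\equiv 0$ you assert that ``three subcases arise'' but do not prove exhaustiveness: one must rule out, e.g., two non-constant, functionally dependent eigenvalues with non-vanishing discriminant (say $\lambda_1=h(y)$, $\lambda_2=2h(y)$). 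The paper excludes this by bringing $L$ to upper-triangular form $\operatorname{diag}$-plus-$1$ and using the Nijenhuis condition $f_y'(f-g)=0$; no such argument appears in your proposal. Relatedly, for the $M$ and $N$ series, knowing the normal form of $(v,u)$ does not determine $L$ when $\ddd v\wedge\ddd u\equiv 0$: one needs the additional normalization $l^1_2\equiv1$ and reconstruction through \eqref{eq:bols03}, which is again Lemma \ref{lem1'} --- a step your sketch skips when it passes from ``the eigenvalue is normalized to $\pm y^{2k}$ or $y^{2k-1}$'' to the stated matrix forms.
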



\begin{proof}
The idea of the proof is natural:  since $L$ is  basically defined by its trace and determinant,  we will be looking for local coordinates $x,y$ in which $v=\tr L$ and $u=-\det L$ have their ``simplest'' possible form.  We start with two technical lemmas.

\begin{Lemma}\label{lem0}  Under assumptions of Theorem \ref{thm:dim2}, there exist local coordinates $(x,y)$ such that for $u=-\det L$ one of the following holds:
$$\ 
(\mathrm{i})  \  u \equiv 0, \quad   (\mathrm{ii}) \ u = \pm \, y^{2k},  \quad (\mathrm{iii}) \  u=y^{2k-1},  \qquad k\in\mathbb N.
$$
\end{Lemma}
\begin{proof}   In companion coordinates (see \eqref{first_set}),  the function $u=-\det L$ satisfies the equation 
\begin{equation}
\label{eq:bols02}
u_x =  g(x,y) u,
\end{equation}  
where $g(x,y)= \partial_y \tr L$. Hence $u = f(y) \exp\left(\int_0^x g(t,y)dt\right)$ for some real analytic function $f(y)$.  If $f(y)\equiv 0$, we have Case (i). Otherwise, writing $f$ in the form $f(y) = \epsilon y^m h(y)$ with $\epsilon =\pm\, 1$, $h(0) >0$, $m\in\mathbb N$, we get for $m=2k$ and $m=2k-1$ respectively:
$$
u =  \pm \left(  y \sqrt[2k]{h(y)\exp\left(\int_0^x g(t,y)dt\right) }  \right)^{2k} \quad \mbox{or} \quad
u =  \left(  y \sqrt[2k-1]{\pm h(y)\exp\left(\int_0^x g(t,y)dt\right) }  \right)^{2k-1} .
$$
Letting $y_{\mathrm{new}}$ be the expression in brackets gives $u=\pm\, y_{\mathrm{new}}^{2k}$ or $u=y_{\mathrm{new}}^{2k-1}$, as required. \end{proof}

This lemma brings $\det L$ to its simplest canonical form.  After this we keep the $y$-coordinate fixed and simplify $v=\tr L$ by changing the $x$-coordinate only. 
    
\begin{Lemma}\label{lem1'}
There exists a coordinate change of the form $(x_{\mathrm{old}},y) \mapsto (x, y)$ such that the $l^1_2$-component of $L$ in new coordinates equals identically $1$.
\end{Lemma}

\begin{proof} Setting $x_{\mathrm{old}} = g(x, y)$ and applying the standard transformation rule for components of an operator, we observe that the required condition is 
$$
\frac{ l^1_2 (g, y) + g_y l^1_1(g, y) - g_y l^2_2(g, y)  - g_y^2 l^2_1(g, y) }{g_x} = 1,
$$
where $l^i_j$ are the components of $L$ in the old coordinate system.
Writing this relation in the form $g_x = F (g_y, g, y)$, we can locally solve it by Cauchy-Kovalevskaya theorem.  It is important that $L$ is 
$\gl$-regular, this allows us to choose initial conditions in such a way that $g_x(0,0)\ne 0$ so that the coordinate transformation is invertible. \end{proof}

Now let us discuss all the cases one by one.
First, assume that $u \equiv 0$ and $v \equiv 0$, then $L$ is a nilpotent Jordan block and its companion form coincides with $L_{\mathrm{nil}}$. 

Next suppose $u \equiv 0$, while $v$ is not. In companion coordinates (see \eqref{first_set}), $v$ satisfies the Hopf equation $v v_y - v_x = 0$. This equation can be rewritten as 
\begin{equation}
v_x = g(x.y) v \quad \mbox{with } g = v_x,    
\end{equation}
which is similar to the above equation \eqref{eq:bols02} for $u$.  Just in the same way as in Lemma \ref{lem0}, we find a coordinate system in which $v = y^{2k - 1}$ or $v=\epsilon y^{2k}$ for $k \geq 1, \epsilon = \pm 1$. By Lemma \ref{lem1'}, we may also assume that $l^1_2 = 1$.   Now $L=\Bigl(l^i_j\Bigr)$ can be reconstructed from relation   \eqref{eq:bols03}. This yields series $M_{2k - 1}$ and $M^{\epsilon}_{2k}$ for different $v$ respectively.

Now let $u \not \equiv 0$, but $\ddd v \wedge \ddd u \equiv 0$.  Combining Lemmas \ref{lem0} and \ref{lem1'}, we may assume that  $u = y^{2m - 1}$ or $u=\pm \, y^{2m}$ for $m \geq 1$, and  $l^1_2 = 1$. Since $\ddd v \wedge \ddd u \equiv 0$, we also know that $v_x \equiv 0$. Relation \eqref{eq:bols03} implies that $l^2_1 = 0$ and we come to the operator of the form
$$
L = \begin{pmatrix}  f(y) & 1 \\ 0 & g(y)   \end{pmatrix} \quad \mbox{with }  v = f+g \ \mbox{and} \ u = -fg.
$$
It is straightforward to check that Nijenhuis condition in this case reads $f'_y(f-g)=0$. In our case $f$ cannot be constant as in this case, since $L$ is nilpotent at the origin, we would necessarily have $f\equiv 0$,  which contradicts our assumption that $u=-fg\not\equiv 0$.     Therefore, we conclude that $f-g = 0$,  meaning that $L$ is a Jordan block at each point.
 This yields series $N_{2k - 1}$ and $N^{\epsilon}_{2k}$.

If $\ddd v \wedge \ddd u \ne 0$ at the point $\mathsf p$, then $L$ is differentially non-degenerate and its normal form is $L_{\mathrm{nd}}$  \cite[Theorem 4.4]{Nijenhuis1}.  Notice that in terms of Lemma \ref{lem0},  the non-degeneracy condition corresponds exactly to the case $u=y$ and below we exclude this case.

Finally we consider the most interesting case when $\ddd v \wedge \ddd u \not \equiv 0$  (but $\ddd v \wedge \ddd u = 0$ at $\mathsf p$).  As previously, we assume that $u = y^{2m + 1}$ or $u=\epsilon y^{2m}$ for $m \geq 1, \epsilon = \pm 1$ and $l^1_2 = 1$.   Computing $l^1_2$ from matrix relation  \eqref{eq:bols01} yields the following equation on $v$:
\begin{equation}\label{rff}
v_x = v v_y - \frac{1}{d}y (v_y)^2 + u_y, 
\end{equation}
where $d = 2m + 1$ or $2m$.   This equation implies the following

\begin{Lemma}\label{lem2}
The function $v(x,y)$ can be written as $v = v_0(y) + y^s (\alpha x + F)$, where $\alpha \neq 0$, $s \geq 1$ and $F(x, y)$ is a real analytic function with no constant or linear part.
\end{Lemma}

\begin{proof}
Let $v(x,y) = v_0(y) + v_1(y) x + v_2(y) x^2 + \dots$ be a solution of \eqref{rff}. 
Differentiating \eqref{rff} w.r.t. $x$ we get
$$
v_{xx} = v_x v_y + v v_{xy} - \frac{2}{d}y v_y v_{xy}.
$$
Note that $v = v_0(y)$ satisfies this equation for initial conditions $v(0, y) = v_0(y), v_x(0, y) = v_1(y) \equiv 0$. By Cauchy-Kovalevskaya theorem this solution is unique.  Hence, if $v_1 \equiv 0$, then $\ddd v \wedge \ddd u \equiv 0$ which is wrong. Thus, in our case $v_1(y) = y^s r_1(y)$, where $r_1(0) = \alpha \neq 0$.

Assume that $s = 0$. This means that $\ddd v$ and $\ddd y$ are linearly independent. We can introduce $x_{\mathrm{new}}=v=\tr L$, leaving $y$ the same. In these new coordinates,  relation \eqref{eq:bols01} gives
$$
L = \begin{pmatrix}  1 & 0 \\  0 & u_y^{-1} \end{pmatrix} \begin{pmatrix} x_{\mathrm{new}} & 1 \\  u(y) & 0    \end{pmatrix}\begin{pmatrix}  1 & 0 \\  0 & u_y \end{pmatrix} =
\begin{pmatrix}    x_{\mathrm{new}}  & u_y \\ u u_y^{-1} & 0  \end{pmatrix}.
$$
It is easy to see that $L$ at the origin $\mathsf p$ is similar to the nilpotent Jordan block only for $u= y$. But in this case we get $L=L_{\mathrm{nd}}$ falling into the previous case.

Thus,  we have $s \geq 1$.  Equating the coefficients of $x^i$ it the both sides of \eqref{rff} yields
\begin{equation}\label{rush}
\begin{aligned}
(i + 1) v_{i + 1}(y) & =  v_0(y)v_i'(y) + v'_0(y)v_i(y) - \frac{2}{d} yv'_0(y)v'_i(y) + \\
& + \sum \limits_{j = 1}^{i - 1} \big(v_j(y)v'_{i - j}(y) - \frac{1}{d}y v'_j(y)v'_{i - j}(y)\big).
\end{aligned}
\end{equation}
If $v_1, \dots, v_i$ are divisible by $y^s$, then $v'_1, \dots, v'_i$ are divisible by $y^{s - 1}$. As $v(0, 0) = 0$, then $v_0$ is divisible by $y$. By formula \eqref{rush} the coefficient $v_{i + 1}$ is divisible by $y^s$. Thus, by induction all the coefficients $v_1, v_2, \dots$ are divisible by $y^s$ and one writes $v = v_0 + y^s(x r_1(y) + \dots) = v_0 + y^s(\alpha x + F)$, where $F$ is analytic and has no constant or linear parts. Lemma is proved.
\end{proof}

Using Lemma \ref{lem2}, we introduce new coordinates $x_{\mathrm{new}} = x + \frac{1}{\alpha}F + \tilde{v_0}$, $y_{\mathrm{new}} = y$, where $\tilde{v_0}$ contains all the terms of $v_0$ of order $\ge s + 1$. In this new coordinate system  (for which we continue using old notation  $x$ and $y$) we have:
\begin{equation}
\label{eq:bols04}
v = p_s(y) +\alpha x y^s, \quad  u = y^{2m + 1} \ \mbox{or} \ u=\epsilon y^{2m},
\end{equation}
where $\alpha \neq 0$, $m, s \geq 1$ and $p_s$ is polynomial of degree at most $s$. 

This coordinate system is optimal in the sense that $v=\tr L$ and $u=-\det L$ cannot be simplified further.  The last step is to distinguish those pairs of functions $v(x,y)$ and $u(x,y)$  from family \eqref{eq:bols04} that indeed {\it generate}  analytic perturbations of the nilpotent Jordan block $J$ via relation \eqref{eq:bols01}.  The point is that \eqref{eq:bols01} will generate a Nijenhuis operator $L$ for any $v$ and $u$, but we need only those of them which have no singularity at $\mathsf p=(0,0)$ and, moreover, such that $L(\mathsf p)$ is similar to $J_0$.  

Straightforward reconstruction of $L$,  from \eqref{eq:bols01}  with $v$ and $u$ given by \eqref{eq:bols04},  shows that all the components of $L$ are non-singular and vanish at the origin except for $l^1_2$:
$$
L = \begin{pmatrix}
v - \frac{yv_y}{d} &   \frac{v v_y  -  \frac{1}{d} y v_y^2 + u'}{v_x}    \\  &  \\
\frac{y v_x}{d} &  \frac{y v_y}{d}
\end{pmatrix}
$$
where $d=2m+1$ or $d=2m$ (power of $y$ in the formula for $u$).   

The ``troublesome'' component, in more detail, reads:
$$
l^1_2 = s \alpha x^2 y^{s-1} \left(1-\frac{s}{d}\right) + x\left( \frac{1}{y} p_s + \left( 1-\frac{2}{d} \right) p_s'   \right) + \frac{p_s p'_s - \frac{1}{d}y(p'_s)^2 + u'}{\alpha y^s}. 
$$ 
Notice that $p_s(0)=0$  and therefore  $\frac{1}{y} p_s$ is analytic.  Hence, we only need to analyse the fraction
\begin{equation}\label{algebraic}
    \frac{p_s p'_s - \frac{1}{d}y(p'_s)^2 + u'}{\alpha y^s}   
\end{equation}
This fraction must define an analytic function having value $1$ at the origin (in order for $L(\mathsf p)$ to be the standard nilpotent Jordan block). Thus, we need to solve a purely algebraic problem: find all polynomials $p_s$, for which the denominator of \eqref{algebraic} is divisible by $\alpha y^s$ so that this fraction is, in fact,  a polynomial with free term equal to $1$.  We rewrite \eqref{algebraic} as
\begin{equation}\label{alg}
    p_s p'_s - \frac{1}{d}y(p'_s)^2 = - u' + \alpha y^s + \alpha_1 y^{s + 1} + \dots + \alpha_{s - 1}y^{2s - 1},
\end{equation}
where $\alpha \neq 0$ and $\alpha_i$ are, in general, arbitrary.

Let $p_s$ starts with a term of order $k\ge 1$, that is, $p_s = y^k\bigl(c_0 + c_1 y + \dots + c_{s-k} y^{s - k}\bigr)$. Then the smallest degree term in the l.h.s. of \eqref{alg}  is $k c_0^2 \left(1 - \frac{k}{d}\right)y^{2k - 1}$. On the other hand, the term of the smallest degree in the r.h.s. is either $u'=\pm d y^{d-1}$ or  $\alpha y^s$ (or both of them).  

First, assume $s < d - 1$. Then we get $2k - 1 = s$ and furthermore $p_{2k - 1} = c_0 y^{k} + \dots + c_{k -1} y^{2k - 1}$, where $c_1, \dots, c_{k-1}$ are arbitrary and $c_0 \neq 0$. We also have  $\alpha = k c_0^2 \left(1 - \frac{k}{d}\right)$ obtaining, as a result, the series $O^{d,\epsilon}_{k,\mathrm{c}}$. 

Next, assume $d - 1 < s$. Then we get $d - 1 = 2k - 1$ and, thus, $u = \epsilon y^{2k}$. Equating the coefficients of $y^{2k - 1}$ on both sides of \eqref{alg} we get $\frac{k}{2} c^2_0 = - 2k \epsilon$ and, thus, $u = - y^{2k}$ and $c_0=\pm 2$. We write  $p_s = \pm 2 y^k + c_1 y^{k + 1} + \dots + c_{s - k} y^s$ and substitute it into  \eqref{alg}. Equating the coefficients of $y^{2k},\dots, y^{s-1}$ in the l.h.s. of \eqref{alg} to zero we get, step by step, that  $c_1=c_2=\dots =c_{s-2k}=0$. Hence,  
re-denoting $c_{s-2k+j} \mapsto c_{j-1}$ for $j=1,\dots, k$ we have: 
$$
p_s = y^{s-k+1}\bigl(c_{k-1} y^{k-1} + \dots + c_1 y + c_0\bigr) \pm 2y^k,
$$
and equating the coefficients of $y^s$ in both sides of \eqref{alg},  we  obtain  $\alpha =  \pm 2 k c_0\ne 0$.   This yields series $P^{k,\epsilon}_{s, c}$. 

Finally, consider $d - 1 = s$. We have two possibilities. First, assume that $d = 2m$, i.e., $u = \epsilon y^{2m}$. We get that $2k - 1 = 2m - 1$, $k = m$ and $v = \alpha xy^{2m - 1} + c_0 y^m + \dots + c_{m-1} y^{2m - 1}$ with $\alpha = \frac{m}{2} (c_0^2 + 4\epsilon) \neq 0$. Now assume that $d = 2m + 1$, i.e., $u = y^{2m + 1}$. This yields $v = \alpha xy^{2m} + c_0 y^{m + 1} + \dots + c_{m-1} y^{2m}$ and $\alpha = 2m + 1$. This yields $S^{2m,\epsilon}_{\mathrm{c}}$ and $S^{2m+1}_{\mathrm{c}}$ respectively (in the statement of the theorem we replace $m$ by $k$). \end{proof}

\begin{Remark}{\rm   For the series $O$, $P$ and $S$ the canonical coordinate system is essentially unique  (in some cases on can simultaneously change the sign of $x$ and $y$).  Indeed,   these coordinates are those in which $u=-\det L$ and $v=\tr L$ are given by \eqref{eq:bols04}. The integer parameters $m$ and $s$ involved in \eqref{eq:bols04} are uniquely defined for given $u$ and $v$.  Hence,  $y$ can be reconstructed from $u$ (sometimes up to sign), and $x$ is determined, up to a constant factor, by the condition that $v(0,y)$ is a polynomial of degree $\leq s$. Finally, the rescaling  of $x$ is chosen in such a way that at the origin we have $L(0,0)=J_0$.

This implies that Nijenhuis operators from different series (or from the same series but with different parameters) are not equivalent to each other.  The only exception is related to the above mentioned ``canonical'' transformation $(x,y) \mapsto (-x,-y)$ that changes the parameter $\mathrm c \in \R^k$, but this change is easy to control.  
}\end{Remark}

We now apply the local classification of $\gl$-regular Nijenhuis operators to study the existence  (and examples) of such operators on closed two-dimensional surfaces.

Let $(\mathsf{M}^2, L)$ be a $\gl$-regular Nijenhuis manifold of dimension $2$  (recall that we always assume them to be real analytic).      
Consider the set $\mathsf{Sing}$ of singular points of $L$ where the algebraic type of $L$ changes.   In our case,  this means that the eigenvalues of $L$ collide, i.e.
$$
\mathsf{Sing}=\{  \mathsf p\in \mathsf{M}^2~|~  v^2 + 4u =0\}, \quad \mbox{where }  v=\tr L, \ u=-\det L,
$$
unless $v^2 +4u \equiv 0$ on $\mathsf{M}^2$ meaning that $L$ is similar to a Jordan block at each point.

From Theorem \ref{thm:dim2} we immediately obtain a {\it local} description of $\mathsf{Sing}$ in canonical coordinates $x,y$: 

\begin{itemize}
\item for $L_{\mathrm{nil}}$, $N_{2k-1}$ and $N_{2k}^\epsilon$, the singular set is empty; 
\item for $L_{\mathrm{nd}}$ the singular set is $\mathsf{Sing}=\{ x^2 +4y=0\}$;
\item for all the other series $M$, $O$, $P$ and $S$:  \quad
$\mathsf{Sing}_{loc}=\{ y=0\}$.
\end{itemize}

Thus, locally $\mathsf{Sing}$ is a smooth curve. Since $\mathsf{Sing}\subset \mathsf{M}^2$ is closed, we may think of it as a submanifold consisting, perhaps, of several connected components:   
$$
\mathsf{Sing} = \mathsf{S}_1 \cup \dots \cup \mathsf{S}_\ell.
$$
If $\mathsf{M}$ is compact, then each of them is an embedded circle.  Next we can easily observe that all points from  $\mathsf S_i$ relate to the same series  (different components may, of course, relate to different series).   However, the parameters of the series may change.   This happens for series $O$, $P$ and $S$. Indeed,  moving  along $\mathsf{Sing}_{loc}=\{ y=0\}$ leads to the shift $x_{\mathrm{new}}=x - x_0$ resulting in the following modification for $v=\tr L$   (whereas $\det L$ remains unchanged):
$$
v = \alpha x y^s + c_{k-1} y^s +  \dots =\alpha  (x_{\mathrm{new}} + x_0)  y^s + c_{k-1} y^s  + \dots = \alpha  x_{\mathrm{new}}   y^s + (c_{k-1}+\alpha x_0) y^s  + \dots
$$
In other words, all parameters remain fixed except for $c_{k-1}$  which undergoes the shift $c_{k-1} \mapsto c_{k-1}+\alpha x_0$.  Notice that if we move along $\mathsf S_i$ in a certain direction, then $c_{k-1}$ is either strictly increasing or strictly decreasing.  This leads us to the following conclusion.

\begin{Proposition}\label{prop4.1}
Singular points from the series $O$, $P$ and $S$ may not occur on closed $\gl$-regular Nijenhuis 2-manifolds.
\end{Proposition}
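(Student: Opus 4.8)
The plan is to argue by contradiction, turning the monotone behaviour of the top parameter $c_{k-1}$ along a singular curve against the compactness of that curve. Suppose $L$ admits a singular point $\mathsf p$ of one of the series $O$, $P$ or $S$. By the local description preceding the statement, $\mathsf p$ lies on a connected component $\mathsf S_i$ of the singular set $\mathsf{Sing}$; since $\mathsf M^2$ is closed this component is an embedded circle, along which the series type is constant. In a neighbourhood of $\mathsf S_i$ the operator is given, in the canonical coordinates of Theorem \ref{thm:dim2}, by $u=-\det L$ and $v=\tr L$ as in \eqref{eq:bols04}, with $\mathsf S_i$ locally the line $\{y=0\}$ and $x$ a coordinate along it.

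First I would record the transformation law for $c_{k-1}$. Shifting the base point along $\{y=0\}$ by $x_0$ is the change $x=x_{\mathrm{new}}+x_0$, which leaves $u=-\det L$ untouched and alters $v$ only through its leading term $\alpha x y^s$, producing $c_{k-1}\mapsto c_{k-1}+\alpha x_0$. Hence along $\mathsf S_i$ the parameter obeys $\ddd c_{k-1}=\alpha\,\ddd x$ with $\alpha\neq0$. Because $\mathsf S_i$ is connected, of a single series, and $\alpha$ is a continuous nowhere-vanishing function of the remaining (constant) parameters $c_0$ and $\epsilon$ (for instance $\alpha=kc_0^2(1-k/d)$ for series $O$), the sign of $\alpha$ is constant on $\mathsf S_i$. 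Thus $c_{k-1}$ is a strictly monotone function of a coordinate running along $\mathsf S_i$.

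The contradiction is then immediate: $\mathsf S_i$ is a circle, so after one full turn the base point returns to itself and $c_{k-1}$ must resume its initial value, which is incompatible with strict monotonicity. Consequently no singular point of series $O$, $P$ or $S$ can occur on a closed $\gl$-regular Nijenhuis surface, which is exactly the assertion.

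The step I expect to be the main obstacle is making $c_{k-1}$ a genuinely single-valued function on all of $\mathsf S_i$ rather than a merely local quantity, because the canonical coordinates of Theorem \ref{thm:dim2} are unique only up to the involution $(x,y)\mapsto(-x,-y)$, which reverses the direction along $\{y=0\}$ and sends $c_{k-1}\mapsto-c_{k-1}$. To control this I would fix an orientation of the circle $\mathsf S_i$, which is always possible. When $u=-\det L$ is an odd power of $y$ the sign change is forbidden, since it would flip the sign of $u$, so the canonical $x$ already carries a definite direction; when $u$ is an even power of $y$ one uses the permitted sign change in each chart to align $x$ with the chosen orientation. After this alignment the overlap maps along $\mathsf S_i$ are pure shifts $x\mapsto x-x_0$, under which the rule $c_{k-1}\mapsto c_{k-1}+\alpha x_0$ patches consistently; hence $c_{k-1}$ descends to a well-defined smooth function on the oriented circle and the monotonicity argument applies without ambiguity.
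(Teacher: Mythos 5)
Your proof is correct and takes essentially the same route as the paper's: the shift rule $c_{k-1}\mapsto c_{k-1}+\alpha x_0$ with $\alpha\neq 0$ makes the top parameter strictly monotone along each compact singular circle $\mathsf S_i$, which is incompatible with returning to the starting point after one full turn. Your additional care with the coordinate ambiguity $(x,y)\mapsto(-x,-y)$ and the choice of orientation on $\mathsf S_i$ is a point the paper leaves implicit (``if we move along $\mathsf S_i$ in a certain direction\dots''), so it only tightens the same argument.
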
     

According to  \cite[Proposition]{Nijenhuis1},  the same conclusion holds for differentially non-degenerate singular points (series $L_{\mathrm{nd}}$) and therefore we obtain 

\begin{Proposition}\label{prop4.2}
Let  $(\mathsf{M}^2, L)$ be a closed $\gl$-regular Nijenhuis 2-manifold.  Then 
\begin{itemize}
\item either $\mathsf{Sing}$ is empty  (i.e. all points of $\mathsf{M}^2$ are of the same algebraic type), 
\item or   each $\mathsf p\in \mathsf{Sing}$  belongs to the series $M$ and then  automatically one of the eigenvalues of $L$ is constant on $\mathsf{M}^2$.
\end{itemize}
\end{Proposition}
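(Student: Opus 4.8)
The plan is to combine the local classification of Theorem \ref{thm:dim2} with the exclusion results of Proposition \ref{prop4.1} (together with the differentially non-degenerate case treated in \cite{Nijenhuis1}), and then to globalise a purely local constancy statement by real-analytic continuation. Throughout I assume $\mathsf{M}^2$ connected; otherwise one argues separately on each connected component. The case $\mathsf{Sing}=\emptyset$ is the first alternative, so I would immediately reduce to the situation $\mathsf{Sing}\ne\emptyset$ and fix a point $\mathsf p\in\mathsf{Sing}$.

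First I would identify the local type at $\mathsf p$. At a singular point the two eigenvalues of the real operator $L(\mathsf p)$ collide, hence coincide in a common real value $\lambda$; replacing $L$ by $L-\lambda\operatorname{Id}$, which is again a $\gl$-regular Nijenhuis operator with the same singular set, we may assume $L(\mathsf p)=J_0$. Theorem \ref{thm:dim2} then provides the complete list of local models, namely the series $L_{\mathrm{nil}}$, $L_{\mathrm{nd}}$, $M$, $N$, $O$, $P$, $S$. Proposition \ref{prop4.1} rules out $O$, $P$, $S$ on a closed surface, the corresponding statement for the differentially non-degenerate series $L_{\mathrm{nd}}$ is \cite[Proposition]{Nijenhuis1}, and the series $L_{\mathrm{nil}}$ and $N$ have empty singular set and so cannot occur at a point of $\mathsf{Sing}$. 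Hence $L$ is locally of type $M$ near $\mathsf p$.

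Next I would extract the key consequence of the $M$-normal form and globalise it. For every member $M_{2k-1}$, $M^{\epsilon}_{2k}$ of the $M$-series the first column vanishes, so $\det L\equiv 0$ locally; thus $0$ is a constant eigenvalue and the other eigenvalue is $\tr L$, which vanishes exactly on $\mathsf{Sing}_{loc}=\{y=0\}$. Undoing the shift, this reads $\det\bigl(L-\lambda\operatorname{Id}\bigr)\equiv 0$ on a neighbourhood of $\mathsf p$. Since $L$ is real analytic, $\det(L-\lambda\operatorname{Id})$ is a real-analytic function on $\mathsf{M}^2$ vanishing on a nonempty open set, and by the identity theorem on a connected manifold it vanishes identically. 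Therefore $\lambda$ is an eigenvalue of $L$ at every point, i.e. one of the eigenvalues of $L$ is the constant $\lambda$, which establishes the second alternative.

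Finally I would verify that the first bullet statement holds at every singular point, not merely at the chosen $\mathsf p$. Given the globally constant eigenvalue $\lambda$, at any $\mathsf q\in\mathsf{Sing}$ the two eigenvalues collide and must both equal $\lambda$, so by $\gl$-regularity $L(\mathsf q)=\lambda\operatorname{Id}+J_0$. Running the classification at $\mathsf q$ and using once more that $\det(L-\lambda\operatorname{Id})\equiv 0$, hence $u\equiv 0$ in the canonical coordinates, leaves only the series $L_{\mathrm{nil}}$ and $M$; the former has empty singular set, so $\mathsf q$ belongs to the $M$-series. The only genuinely delicate point is this analytic-continuation step: it is what upgrades the local relation $\det(L-\lambda\operatorname{Id})\equiv 0$, valid near a single singular point, to a global identity on $\mathsf{M}^2$, and it is precisely here that connectedness and real (as opposed to merely smooth) analyticity are indispensable.
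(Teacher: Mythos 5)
Your proposal is correct and follows essentially the same route as the paper: the paper deduces Proposition \ref{prop4.2} directly from Theorem \ref{thm:dim2} (noting that $L_{\mathrm{nil}}$ and $N$ have empty singular set), Proposition \ref{prop4.1} for the series $O$, $P$, $S$, and the cited result of \cite{Nijenhuis1} for $L_{\mathrm{nd}}$, leaving series $M$ as the only possibility. Your explicit analytic-continuation argument (identity theorem applied to $\det(L-\lambda\operatorname{Id})$ on the connected real-analytic surface) and the check that all singular points share the same collision value $\lambda$ merely spell out what the paper compresses into the word ``automatically'', so the two proofs coincide in substance.
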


We are now ready to prove our final result.

\begin{proof}[Proof of Theorem \ref{main:3}]  Consider the two options from Proposition \ref{prop4.2}. First assume that $\mathsf{Sing} =\emptyset$.   Then  $L$ belongs to one of three generic types listed in the beginning of this Section:
\begin{enumerate}
\item[(i)] either $L$ has two distinct real eigenvalues  at each point of $\mathsf M^2$;
\item[(ii)] or $L$ has two complex conjugate eigenvalues  at each point of $\mathsf M^2$;
\item[(iii)] or $L$ is similar to a Jordan block at each point of $\mathsf M^2$.
\end{enumerate}

In Case (i),  at each point  $\mathsf p\in\mathsf M^2$,  we have an eigenbasis basis  $e_1, e_2\in T_{\mathsf p} \mathsf M^2$ where $e_1$ corresponds to the maximal eigenvalue at a given point.  If we fix some Riemannian metric on $\mathsf M^2$, we may assume that $e_i$ are normalised so that $|e_i|=1$.  Since such $e_i$ are defined up to $\pm$, we have 4 different bases at each point.  A priori it is not clear whether or not we can chose a smooth ``moving frame'' field on the whole manifold, but this can obviously be done on a finite sheeted covering $\widetilde{\mathsf M}^2$ of $\mathsf M$  (number of sheets is at most four).  This implies that $\widetilde{\mathsf M}^2$ is parallelisable and hence is a torus.  Therefore, $\mathsf M^2$ is either a torus or Klein bottle and we obtain Case 2 of Theorem \ref{main:3}.

In Case (ii), according to \cite[Theorem 6.1]{Nijenhuis1} the complex eigenvalues $\lambda,\bar\lambda$ of the Nijenhuis operator $L$ are constant and we 
obtain Case 1 from Theorem \ref{main:3}.

In Case (iii), at each point $\mathsf p\in\mathsf M^2$ we have a non-zero eigenvector $e \in T_{\mathsf p} \mathsf M^2$ and the same argument as above shows that on $\mathsf M^2$ or on its two sheeted covering one can define a smooth vector field with no singular points. Hence $\mathsf M^2$ is either a torus or Klein bottle.   However,  in this case we have one additional property that the automorphism group of a Jordan block consists of orientation preserving transformations,  which allows us to define orientation on $\mathsf M^2$. Hence, the Klein bottle is forbidden and we are lead to Case 3 of Theorem \ref{main:3}.

Thus, the condition $\mathsf{Sing}=\emptyset$ necessarily implies one of the first three cases of Theorem \ref{main:3}.

Finally, we consider the second option from Proposition \ref{prop4.2}.   This option implies that one of the eigenvalues of $L$ is constant allowing us to consider a non-zero eigenvector related to this eigenvalue at each point  and, in the same way as above, to construct a smooth vector field with no zeros either on $\mathsf M^2$ or on its two-sheeted covering.  This implies that $\mathsf M^2$ is either a torus or Klein bottle and we obtain Case 4 of Theorem \ref{main:3}.

Thus, the list of possibilities presented in Theorem \ref{main:3} is complete.  \end{proof}

We conclude this section with examples of Nijenhuis operators listed in Theorem \ref{main:3}.

\begin{Ex}
\label{ex:torus1}
{\rm

Let $\mathsf T^2$ be a torus with standard angle coordinates $\phi_1$ and $\phi_2$ defined modulo $2\pi$.  For an operator $L$ with two distinct eigenvalues at each point $(\phi_1, \phi_2)$, we can distinguish three essentially different possibilities. 

\begin{itemize}  

\item Two constant eigenvalues $\lambda_1$ and $\lambda_2$. Let $\xi$ and $\eta$ be two vector fields on $\mathsf{T}^2$ that are linearly independent at each point (NB: there are many non-equivalent examples of such vector fields), then we define $L$  by setting
\begin{equation}
\label{eq:bols12}
L(\xi)=\lambda_1 \xi\quad \mbox{and} \quad L(\eta)=\lambda_2 \eta.
\end{equation}

\item One constant eigenvalue  (w.l.o.g.  $\lambda_1 =0$), the other $\lambda_2$ is not.   In coordinates $(\phi_1,\phi_2)$ we define $L$ as 
\begin{equation}
\label{eq:bols13}
L = \begin{pmatrix}  0 & g(\phi_1,\phi_2) \\ 0 & f(\phi_2) \end{pmatrix}
\end{equation}   
with $f(\phi_2) >0$  or $f(\phi_2) <0$. Here $\xi=\left(1, -\frac{g(\phi_1,\phi_2)}{f(\phi_2)}\right)$ is an eigenvector field related to the non-constant eigenvalue $\lambda_2 = f(\phi_2)$. 

\item Two non-constant eigenvalues $\lambda_1$ and $\lambda_2$.   An obvious example is 
\begin{equation}
\label{eq:bols14}
L = \begin{pmatrix}  f(\phi_1) & 0 \\ 0 & g(\phi_2) \end{pmatrix}, \qquad  f(\phi_1) < c < g(\phi_2).
\end{equation} 
This example can be modified by taking a finite-sheeted covering over this ``standard'' torus. On the covering torus,  the above {\it global diagonalisation} of $L$ is not always possible.
\end{itemize}
}\end{Ex}

\begin{Ex}\label{ex:Klein1}
{\rm  Each of the above examples \eqref{eq:bols12}, \eqref{eq:bols13} and \eqref{eq:bols14} can be naturally ``transferred'' to the Klein bottle $\mathsf K^2$ that can be thought of as the quotient of $\mathsf T^2$ with respect to the involution $\sigma: \mathsf T^2 \to \mathsf T^2$ given by $(\phi_1,\phi_2) \overset{\sigma}{\mapsto} (-\phi_1, \phi_2+\pi)$.  We only need to make sure that $L$ is invariant with respect to $\sigma$.    Namely, in the above three cases from Example \ref{ex:torus1}  we assume in addition that
\begin{itemize}
\item  $\xi$ is $\sigma$-invariant, whereas $\eta$ changes the direction under the action of $\sigma$, i.e., $\ddd\sigma(\xi) = \xi$ and $\ddd\sigma(\eta) = -\eta$, 

\item $f(\phi_2)$ is $\pi$-periodic,  $g(\phi_1,\phi_2)$ is even w.r.t. $\phi_1$,

\item $g(\phi_2)$ is $\pi$-periodic and $f(\phi_1)$ is even.
\end{itemize}
If these conditions are fulfilled, then the operators $L$ given by \eqref{eq:bols12}, \eqref{eq:bols13} and \eqref{eq:bols14} naturally descend to the quotient $\mathsf{K}^2 = \mathsf{T}^2/\sigma$.
}\end{Ex}

The next is an example of a Nijenhuis operator on $\mathsf T^2$ of Jordan block type (see Case 3 in Theorem \ref{main:3}.

\begin{Ex}{\rm
Assume that $L$ is a $\gl$-regular operator $L$ on $\mathsf T^2$ with  a single eigenvalue $\lambda$ of multiplicity 2.  The cases with constant and non-constant $\lambda$ are essentially different. If $\lambda=\mathrm{const}$, then w.l.o.g. we may assume that $\lambda = 0$, i.e., $L$ is nilpotent.

\begin{itemize}
\item Consider two vector fields $\xi$ and $\eta$ on $\mathsf{T}^2$ which are linearly independent at each point and define $L$ as follows:
$$
L(\xi)=0, \quad L(\eta)= \xi.
$$
Then $L$ is a $\gl$-regular nilpotent Nijenhuis operator on $\mathsf{T}^2$ (notice that any nilpotent operator in dimension 2 is automatically Nijenhuis). 

\item  The case with a non-constant eigenvalue on $\mathsf{T}^2$ can be modelled as follows:
$$
\begin{pmatrix}
f(\phi_2) & g(\phi_1,\phi_2) \\
0 & f(\phi_2)
\end{pmatrix},\qquad g(\phi_1,\phi_2)>0,
$$
where $\phi_1, \phi_2$ denote usual angle coordinates on the torus as above.
\end{itemize}
}\end{Ex}

Finally, we notice that examples corresponding to Case 4  of Theorem \ref{main:3} on the torus $\mathsf T^2$ and Klein bottle $\mathsf K^2 = \mathsf T^2/\sigma$ can be defined by the same formula as \eqref{eq:bols13}. The only difference is that now $f(\phi_2)$  vanish for some $\phi_2$ (but then $g(\phi_1,\phi_2)$ does not!).  The operator $L$ will become nilpotent at such points, which will be automatically singular from series $M$.  Notice that the topological structure of the eigenvector field $\xi$ related to the eigenvalue $f(\phi_2)$ may now be rather non-trivial in contrast to the case when $f\ne 0$. 

We conjecture that the above list of examples essentially exhausts all possible {\it real-analytic} Nijenhuis operators on closed two-dimensional surfaces.  In the smooth case, however, there are essentially different possibilities.

\section{Appendix:  On integration of some hydrodynamic type systems}\label{sect:appendix}

Theorem \ref{main:2} is closely related  to another fundamental problem of solving systems of quasilinear PDEs of the form 
\begin{equation}
\label{eq:bols21}
u^i_t = L^i_j (u) u^j_x,
\end{equation}
or a more general system 
\begin{equation}
\label{eq:bols22}
u_{x_{j-1}} = L (u) u_{x_j},    \quad  j = 1,\dots, m. 
\end{equation}
If $L(u)=\Bigl( L^i_j(u)\Bigr)$ is a Nijenhuis operator, such a system is known to be integrable.    If $L(x)$ is $\R$-diagonalisable, then after rewriting $L$ in diagonal form by Haantjes theorem,  the system \eqref{eq:bols21} splits into $n$ uncoupled Hopf equations  $u^i_t = \lambda(u^i)  u^i_x$  that can be easily solved (see e.g. \cite[Section 2.3]{chechkin}). However, at those points where $L$ is not diagonalisable,  this obvious idea does not work directly, although local analytic solutions still exist as  \eqref{eq:bols21} is a system of Cauchy-Kovalevskaya type. 

This Appendix provides a helpful tool to integrate \eqref{eq:bols21} and \eqref{eq:bols22} near singular points where a Nijenhuis operator is not diagonalisable.  It can be used most effectively if the operator $L$ is $\gl$-regular, but does not require this assumption.

System \eqref{eq:bols21} determines the dynamics of field variables $u^i$ which, in turn, describe the dynamics of $f_i(u)$, the coefficients of the characteristic polynomial of $L$: 
$$
\chi_L(\lambda) = \lambda^n - f_1(u)\lambda^{n-1} - \dots - f_{n-1}(u)\lambda - f_n(u).
$$
Unlike eigenvalues of $L$,  these functions are everywhere smooth and for this reason are more suitable for analysis of solutions of 
\eqref{eq:bols21} near those points where the eigenvalues collide.

Let  $L(u)$ be a Nijenhuis operator (not necessarily differentially non-degenerate or $\gl$-regular).  We want to solve  \eqref{eq:bols21}, i.e., find  the solution $u(t,x)$ with given initial conditions 
\begin{equation}
\label{init_cond_for_u}
u^i(0,x)=u^i_0(x).
\end{equation}

Instead of solving this problem we shall try to solve a ``simpler'' problem  (which is equivalent to it if $L$ is differentially non-degenerate).
Namely,  instead of $u(t,x)$ we will be looking for  $f_i(u(t,x))$, the coefficients of the characteristic polynomial, with the corresponding initial condition
\begin{equation}
\label{init_cond_for_f}
f_i(u(0,x))=f_i(u^i_0(x))= v_i(x).
\end{equation}

Notice that the dependence $f$ of $u$ is explicit, but the inversion is not always possible.  In this setting we have the following theorem.

\begin{Theorem}
\label{magic}
 For $n$ real analytic functions $v_1(t), \dots, v_n(t)$ defined from    \eqref{init_cond_for_f},
 consider the function 
 $$
 r(\lambda,\mu) = \lambda^n  -  v_1(\mu) \lambda^{n-1} - v_2(\mu) \lambda^{n-2} - \dots - v_{n-1}(\mu) \lambda - v_n(\mu)
 $$ 
 and the matrix relation 
 $$
 r(L, M) = 0,
 $$
 where $M = t L +  x\operatorname{Id}$ and $L$ is a $\gl$-regular  $n\times n$ matrix. 
 Then
 \begin{itemize}
\item[$(1)$] From this matrix relation,  the coefficients $f_1,\dots, f_n$ of the characteristic polynomial of $L$ can be uniquely expressed in a neighbourhood of $(t,x)=(0,0)$  as real analytic functions in $t, x$ (by Implicit Function Theorem).   

\item[$(2)$] The functions $f_1(t,x),\dots,f_n(t,x)$ so obtained are the coefficients of the characteristic polynomial of $L(u(t,x))$  where  
$u(t,x)$ is the solution of \eqref{eq:bols21} with initial condition \eqref{init_cond_for_u}.
 \end{itemize}
\end{Theorem}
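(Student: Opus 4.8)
The plan is to show that the coefficients $f_i(t,x)$ produced by the algebraic relation $r(L,M)=0$ and the genuine characteristic-polynomial coefficients of $L(u(t,x))$ are solutions of one and the same Cauchy problem for a first-order quasilinear system in companion form, and then to conclude by uniqueness.

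First I would establish the \emph{key reduction}: if $u(t,x)$ is the local real analytic solution of \eqref{eq:bols21} with data \eqref{init_cond_for_u}, then the functions $f_i(t,x):=f_i\bigl(u(t,x)\bigr)$ satisfy the closed system
\begin{equation}
f_t = \Lfirst(f)\, f_x, \qquad f=(f_1,\dots,f_n)^\top,
\end{equation}
where $\Lfirst(f)$ is the first companion matrix built from $f$. Indeed, differentiating along the flow and using $u_t = Lu_x$ gives $\partial_t f_i = \ddd f_i(Lu_x) = (L^*\ddd f_i)(u_x)$, and the Nijenhuis property yields the recursion $L^*\ddd f_k = \ddd f_{k+1}+f_k\,\ddd f_1$ (with $f_{n+1}\equiv 0$), cf.\ \eqref{eq:12} and \cite[Proposition 2.2]{Nijenhuis1}. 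Substituting this in gives $\partial_t f_i = \partial_x f_{i+1}+f_i\,\partial_x f_1$ for $i<n$ and $\partial_t f_n = f_n\,\partial_x f_1$, which is exactly the above matrix identity read row by row; the data \eqref{init_cond_for_f} becomes $f(0,x)=v(x)$. This is the step where the Nijenhuis hypothesis enters, and I expect it to be the conceptual heart of the proof — it needs only that $L$ is Nijenhuis, not $\gl$-regularity; everything afterwards is bookkeeping.

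Since this companion system is solved for $f_t$ in terms of $f$ and $f_x$, it is of Cauchy–Kovalevskaya type, so its real analytic Cauchy problem has a unique real analytic solution near $(0,0)$. It then suffices to exhibit a second solution of the same problem, constructed purely algebraically, and to identify it with the $f_i$ of the theorem. For this I would observe that the data of Theorem \ref{magic} is precisely the restriction of the data of Theorem \ref{main:2} to the plane $\{x^1=\dots=x^{n-2}=0\}$: there $M = x^1 L^{n-1}+\dots+x^{n-1}L+x^n\Id$ becomes $tL+x\Id$ once one sets $x^{n-1}=t$, $x^n=x$. Hence the $f_i(t,x)$ extracted from $r(L,tL+x\Id)=0$ are the restriction of the functions $f_i(x^1,\dots,x^n)$ furnished by Theorem \ref{main:2}, and part (1) — solvability by the Implicit Function Theorem, real analyticity, and the initial condition $f_i(0,x)=v_i(x)$ — is inherited from that theorem directly.

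Finally, Theorem \ref{main:2} guarantees that these $f_i(x^1,\dots,x^n)$ satisfy \eqref{eq:20}, in particular $f_{x^{n-1}}=Lf_{x^n}$ with $L=\Lfirst$. As $\partial_{x^{n-1}}$ and $\partial_{x^n}$ are tangent to the plane, restricting this identity gives $f_t=\Lfirst(f)\,f_x$ for the restricted functions, with the same initial data $f(0,x)=v(x)$. Thus the algebraically defined $f_i(t,x)$ solve the very Cauchy problem of the first step, and by the uniqueness noted above they coincide with the coefficients of the characteristic polynomial of $L(u(t,x))$, which is part (2). The only point requiring care is that the companion-form reduction used to read the $f_i$ off from $r(L,M)=0$ relies on $\gl$-regularity of $L$, exactly as in the proof of Theorem \ref{main:2}.
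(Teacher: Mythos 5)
Your proposal is correct and follows essentially the same route as the paper: derive the companion-form evolution $f_t = \Lfirst(f)\,f_x$ for the characteristic coefficients via \cite[Proposition 2.2]{Nijenhuis1} (you use the $1$-form recursion, the paper the equivalent matrix identity), then identify the algebraic solution as the restriction of Theorem \ref{main:2} to the plane $\{x^1=\dots=x^{n-2}=0\}$ with $x^{n-1}=t$, $x^n=x$. Your only addition is making explicit the Cauchy--Kovalevskaya uniqueness step that the paper leaves implicit in the phrase ``we can simply obtain the required solution,'' which is a welcome clarification rather than a different argument.
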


\begin{proof}

We first derive the system of PDEs that governs the dynamics of $f_1,\dots, f_n$.
These equations are easy to describe. Indeed, every Nijenhuis operator $L(u)$ satisfies the relation (see \cite[Proposition 2.2]{Nijenhuis1})
$$
\left(  \frac{\partial f}{\partial u}\right) L(u) = \Lfirst(f) \left(  \frac{\partial f}{\partial u}\right). 
$$
Multiplying the both sides of this matrix relation with $u_x$ we get
$$
\left(  \frac{\partial f}{\partial u}\right) L(u) u_x = \Lfirst(f) \left(  \frac{\partial f}{\partial u}\right) u_x
 $$
and then using \eqref{eq:bols21} and the standard chain rule $\left(  \frac{\partial f}{\partial u}\right)u_x = f_x$,  
$\left(  \frac{\partial f}{\partial u}\right) u_t = f_t$:
\begin{equation}
\label{eq:bols23}
f_t = \Lfirst(f) f_x.
\end{equation}

If we denote $x=x^n$ and $t=x^{n-1}$ (index, not power!), then \eqref{eq:bols23} coincides with one the equations from \eqref{eq:20}.  Since we know the description for all (local) solutions of \eqref{eq:20},   we can simply obtain the required solution of \eqref{eq:bols23} by setting 
$x^1=\dots=x^{n-2}=0$ in the formulas given in Theorem \ref{main:2}.  This immediately leads to the conclusion of Theorem \ref{magic}.
\end{proof}

\begin{Remark}{\rm
Can we reconstruct the corresponding solution $u(x,t)$ of the original system \eqref{eq:bols21} from $f(x,t)$?    If $L$  is differentially non-degenerate, i.e., $f_1(u),\dots, f_n(u)$ are functionally independent as functions of $u=(u_1,\dots, u_n)$, then the answer is positive. This can be done just by inverting the map $u \mapsto f=f(u)$.  Thus, for differentially non-degenerate Nijenhuis operators,  Theorem \ref{magic} gives a way for solving  \eqref{eq:bols21} near those points where the eigenvalues of $L$ collide.  

If the differential non-degeneracy condition violates at some points (but not identically), then the relation $f (x,t) =  f(u(x,t))$ will still provides strong algrebraic restrictions  which might be sufficient for unique reconstruction of $u(x,t)$. 

Another useful application of Theorem \ref{magic} might be detection of singularities  (like gradient catastrophe) for solutions $u(x,t)$.  If we can find $f(x,t)$ and then observe that this solution is singular at a certain point $(x,t)$, then $u(x,t)$ will be singular too.
}\end{Remark}

\begin{Remark}{\rm
Theorem \ref{magic}  can be naturally adapted for system \eqref{eq:bols22}. We simply need to replace $(t,x)$ with $(x^1,\dots, x^m)$ and set $M=x^1 L^{m-1} + \dots + x^{m-1} L + x^m \operatorname{Id}$. The initial conditions should be taken in the form $u^i(0,\dots,0, x^m)= u^i_0(x^m)$. 
}\end{Remark}

\end{document}